\documentclass[a4paper]{article}

\usepackage{amsthm}
\usepackage{amssymb}
\usepackage{amsmath}

\newtheorem{theorem}{Theorem}[section]
\newtheorem{conjecture}[theorem]{Conjecture}
\newtheorem{lemma}[theorem]{Lemma}
\newtheorem*{definition}{Definition}

\renewenvironment{proof}[1][]%
{\noindent {\setcounter{equation}{0}\it Proof.
}{#1}{}}{\hfill$\Box$\vspace{2ex}}

\def\longbox#1{\parbox{0.85\textwidth}{#1}}

\title{On the choosability of claw-free perfect graphs}
\author{Sylvain Gravier\thanks{CNRS, Institut Fourier, University of
Grenoble,
France.} \and Fr\'ed\'eric
Maffray\thanks{CNRS, Laboratoire G-SCOP, University of Grenoble,
France.} \and Lucas Pastor\thanks{Laboratoire G-SCOP, University of
Grenoble,
France.}}

\begin{document}

\maketitle

\begin{abstract}
It has been conjectured that for every claw-free graph $G$ the choice
number of $G$ is equal to its chromatic number.  We focus on the
special case of this conjecture where $G$ is perfect.  Claw-free
perfect graphs can be decomposed via clique-cutset into two special
classes called elementary graphs and peculiar graphs.  Based on this
decomposition we prove that the conjecture holds true for every
claw-free perfect graph with maximum clique size at most $4$.
\end{abstract}


\section{Introduction}

We consider finite, undirected graphs, without loops.  Given a graph
$G$ and an integer $k$, a \emph{$k$-coloring} of the vertices of $G$
is a mapping $c : V(G) \rightarrow \{1,2, \ldots, k\}$ for which every
pair of adjacent vertices $x,y$ satisfies $c(x) \neq c(y)$.  A
\emph{coloring} is a $k$-coloring for any $k$.  The graph $G$ is
called \emph{$k$-colorable} if it admits a $k$-coloring.  The
\emph{chromatic number} of $G$, denoted by $\chi(G)$, is the smallest
integer $k$ such that $G$ is $k$-colorable.

\medskip

The \emph{list-coloring} variant of the coloring problem, introduced
by Erd\H{o}s, Rubin and Taylor \cite{ErRuTa79} and by Vizing
\cite{JGT:JGT3190160510}, is as follows.  Assume that each vertex $v$
has a list $L(v)$ of prescribed colors; then we want to find a
coloring $c$ such that $c(v) \in L(v)$ for all $v \in V(G)$.  When
such a coloring exists we say that the graph $G$ is
\emph{$L$-colorable} and that $c$ is an \emph{$L$-coloring} of $G$.
Given an integer $k$, a graph $G$ is \emph{$k$-choosable} if it is
$L$-colorable for every assignment $L$ that satisfies $|L(v)| = k$ for
all $v \in V(G)$ (equivalently, if it is $L$-colorable for every
assignment $L$ that satisfies $|L(v)| = k$ for all $v \in V(G)$).  The
\emph{choice number} or \emph{list-chromatic number} $ch(G)$ of $G$ is
the smallest $k$ such that $G$ is $k$-choosable.  It is easy to see
that every $k$-choosable graph $G$ is $k$-colorable (consider the
assignment $L(v) = \{1,2, \ldots, k\}$ for all $v \in V(G)$), and so
$\chi(G) \leq ch(G)$ holds for every graph.  There are graphs for
which the difference between $ch(G)$ and $\chi(G)$ is arbitrarily
large.  (For example, it is easy to see that the choice number of the
complete bipartite graph $K_{p,p^p}$ is $p+1$.)

\medskip

The above notions can be extended to the problem of coloring the edges
of a graph.  The least number of colors necessary to color all edges
of a graph in such a way that no two adjacent edges receive the same
color is its \emph{chromatic index} $\chi'(G)$.  The least $k$ such
that $G$ is $L'$-edge-colorable for any assignment $L'$ of colors to
the edges of $G$ with $|L'(e)|=k$ for all $e\in E$ is called the
\emph{choice index} or \emph{list-chromatic index} of $G$.  Vizing
(see \cite{JGT:JGT3190160510}), proposed the following conjecture:
\begin{conjecture}\label{conj1}
Every graph $G$ satisfies $ch'(G)=\chi'(G)$.
\end{conjecture}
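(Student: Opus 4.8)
The plan is to reduce the edge-coloring problem to a vertex-coloring problem on the line graph and then to attack it with the kernel method. Writing $L(G)$ for the line graph of $G$, a proper edge-coloring of $G$ is precisely a proper vertex-coloring of $L(G)$, and an $L'$-edge-coloring is precisely an $L$-coloring of $L(G)$ for the list assignment inherited edge-by-edge; hence $\chi'(G)=\chi(L(G))$ and $ch'(G)=ch(L(G))$. Since the edge analogue of the inequality $\chi\le ch$ established in the introduction gives $\chi'(G)\le ch'(G)$ for free, the whole content of Conjecture~\ref{conj1} is the reverse inequality $ch'(G)\le\chi'(G)$; equivalently, that every line graph $H=L(G)$ satisfies $ch(H)\le\chi(H)$.

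First I would invoke the \emph{kernel lemma} of Bondy, Boppana and Siegel. Call an independent set $S$ in a digraph $D$ a \emph{kernel} if every vertex outside $S$ has an out-neighbour in $S$, and call $D$ \emph{kernel-perfect} if every induced subdigraph of $D$ has a kernel. The lemma states that if $D$ is a kernel-perfect orientation of a graph $H$ and $L$ is any list assignment with $|L(v)|\ge d^{+}_{D}(v)+1$ for all $v$, then $H$ is $L$-colorable. Thus, setting $k:=\chi'(G)$, it suffices to produce a kernel-perfect orientation of $L(G)$ in which every vertex has out-degree at most $k-1$: that would yield $ch'(G)=ch(L(G))\le k=\chi'(G)$ and close the conjecture.

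Next I would build such an orientation from a fixed proper $k$-edge-coloring $c$ of $G$, following Galvin's construction in the bipartite case. The vertices of $L(G)$ are the edges of $G$, and one orients each pair of adjacent edges $e,f$ using their colors $c(e),c(f)$ together with combinatorial data at their common vertex $u$, so that the out-degrees are locally controlled and the global digraph is kernel-perfect. When $G$ is bipartite with parts $X,Y$ one orients $e\to f$ toward the larger color if $e$ and $f$ meet in $X$, and toward the smaller color if they meet in $Y$. A short count then shows each out-degree is at most $(c(e)-1)+(\Delta(G)-c(e))=\Delta(G)-1=k-1$, and the resulting digraph is kernel-perfect because its kernels correspond to stable matchings, which exist by a Gale–Shapley type argument; this settles the conjecture for bipartite $G$.

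The hard part is the passage from bipartite to arbitrary $G$, and it is exactly where the argument stalls. In the general case $\chi'(G)$ may exceed $\Delta(G)$ (by Vizing's theorem it equals $\Delta(G)$ or $\Delta(G)+1$), and, more seriously, there is no two-sided vertex ordering of the kind that drives the bipartite orientation, so one cannot simply split the out-degree of an edge between its two endpoints. No method is known to produce, for every graph $G$, a kernel-perfect orientation of $L(G)$ with all out-degrees bounded by $\chi'(G)-1$; constructing such an orientation in full generality is the open core of the problem. This is why the statement is phrased as a conjecture: the kernel method provides a complete proof strategy and verifies many special cases, but the required kernel-perfect orientation of $L(G)$ for arbitrary $G$ remains unestablished.
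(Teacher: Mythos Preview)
The statement you were asked about is Conjecture~\ref{conj1}, and the paper does not prove it; it merely records it as Vizing's open list-edge-coloring conjecture and then quotes Galvin's theorem as the bipartite special case. So there is no ``paper's own proof'' to compare against.

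Your write-up is not a proof either, and you are explicit about that: you correctly reduce the problem to showing $ch(L(G))\le \chi(L(G))$, recall the Bondy--Boppana--Siegel kernel lemma, reproduce Galvin's orientation for bipartite $G$, and then identify precisely where the argument breaks down for general $G$ (no bipartition to split out-degrees, no known construction of a kernel-perfect orientation of $L(G)$ with out-degrees bounded by $\chi'(G)-1$). This is an accurate summary of the state of the art and is consistent with how the paper treats the conjecture. In short: you have correctly recognised that the statement is open and have given a faithful account of the standard partial approach; there is nothing to fault, but also no proof to assess.
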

The special case of this conjecture dealing with list-coloring the
edges of a complete bipartite graph was known as the Dinitz
conjecture, as it was equivalent to a problem on Latin squares posed
by Jeffrey Dinitz.  Galvin \cite{Galvin1995153} established the
following more general result.
\begin{theorem}[Galvin \cite{Galvin1995153}]
Every bipartite graph $G$ satisfies $ch'(G)=\chi'(G)$.
\end{theorem}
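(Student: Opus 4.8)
The plan is to prove the (a priori stronger-looking) statement that every bipartite graph $G$ is $\Delta$-edge-choosable, where $\Delta$ denotes the maximum degree of $G$. Indeed, König's edge-coloring theorem gives $\chi'(G)=\Delta$ for bipartite $G$, and since $\chi'(G)\le ch'(G)$ always holds, proving $\Delta$-edge-choosability yields $ch'(G)=\chi'(G)$. I will phrase everything in the line graph $H=L(G)$, whose vertices are the edges of $G$ and in which a list edge-coloring of $G$ becomes a list vertex-coloring of $H$.

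First I would fix a bipartition $(X,Y)$ of $G$ and, using König's theorem, a proper edge-coloring $c\colon E(G)\to\{1,\dots,\Delta\}$. Then I orient $H$ as follows: two edges of $G$ that are adjacent in $H$ share exactly one endpoint (as $G$ is triangle-free); if this endpoint lies in $X$, orient the corresponding arc from the larger color to the smaller, and if it lies in $Y$, orient it from the smaller color to the larger. For an edge $e=xy$, the out-neighbours of $e$ through $x$ carry colors in $\{1,\dots,c(e)-1\}$ and those through $y$ carry colors in $\{c(e)+1,\dots,\Delta\}$, so every vertex of the resulting digraph $D$ has out-degree at most $(c(e)-1)+(\Delta-c(e))=\Delta-1$.

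Next I would invoke the kernel method. Recall that a \emph{kernel} of a digraph is an independent set $S$ such that every vertex outside $S$ has an out-neighbour in $S$. The lemma (Bondy--Boppana--Siegel) is: if every induced subdigraph of a digraph $D$ has a kernel and $L$ assigns to each vertex $v$ a list with $|L(v)|\ge d^+_D(v)+1$, then the underlying graph of $D$ is $L$-colorable. This is a short induction on $|V(D)|$: pick a color $a$ occurring in some list, let $V_a$ be the set of vertices whose list contains $a$, take a kernel $S$ of $D[V_a]$, give every vertex of $S$ the color $a$, delete $S$, and remove $a$ from the remaining lists; a vertex that loses $a$ from its list lies in $V_a\setminus S$ and therefore also loses an out-neighbour (into $S$), so the degree inequality is preserved, and the inductive hypothesis finishes the coloring. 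Applied to our $D$ with any lists of size at least $\Delta\ge(\Delta-1)+1\ge d^+_D(\cdot)+1$, this reduces the theorem to showing that every induced subdigraph of $D$ has a kernel.

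For this last point I would use stable matchings. Given $F\subseteq E(G)$, regard $F$ as a bipartite graph and run the Gale--Shapley algorithm with preference orders ``each $x\in X$ prefers smaller colors'' and ``each $y\in Y$ prefers larger colors'' — these are genuine linear orders, since the edges at a common vertex receive distinct colors. Let $M\subseteq F$ be the resulting stable matching; it is an independent set of $H$. If $e=xy\in F\setminus M$, then $e$ is not a blocking pair, so either $x$ is matched by $M$ to an edge $e'=xy'$ with $c(e')<c(e)$, giving the arc $e\to e'$ by the rule at $X$, or $y$ is matched to an edge $e''=x'y$ with $c(e'')>c(e)$, giving the arc $e\to e''$ by the rule at $Y$; in both cases $e$ has an out-neighbour in $M$. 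Hence $M$ is a kernel of $D[F]$, which completes the proof. The only genuine design choice — and the place where care is needed — is fixing the two orientation conventions at $X$- and at $Y$-vertices so that they simultaneously make the out-degrees add up to exactly $\Delta-1$ and make the Gale--Shapley preference orders agree with the arc directions; once those are aligned, absence of a blocking pair translates verbatim into the domination property of a kernel, and the rest is bookkeeping.
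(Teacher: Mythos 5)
Your proof is correct. Note, however, that the paper does not prove this statement at all: it is quoted as a known result of Galvin with a citation, so there is no internal proof to compare against. What you have written is essentially a faithful reconstruction of Galvin's original argument: the orientation of the line graph you build (out-degree at most $(c(e)-1)+(\Delta-c(e))=\Delta-1$, with opposite conventions at the two sides of the bipartition) is precisely the digraph that the paper later invokes as Theorem~\ref{thm:galvin}, up to swapping the roles of $X$ and $Y$; your kernel lemma plus the Gale--Shapley stable-matching step then proves that theorem and hence the statement. All the individual pieces check out: K\H{o}nig's theorem gives the $\Delta$-edge-coloring to orient against, the out-degree count is right, the Bondy--Boppana--Siegel induction preserves the inequality $|L(v)|\ge d^+(v)+1$ because a vertex losing the color $a$ also loses an out-neighbour into the kernel, and the translation ``no blocking pair'' $\Rightarrow$ ``out-neighbour in $M$'' is exactly aligned with your orientation rules. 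Two cosmetic remarks: the reason two adjacent edges share exactly one endpoint is simplicity of $G$, not triangle-freeness; and Gale--Shapley must be run with incomplete preference lists (only the pairs in $F$ are acceptable), which is standard but worth saying. Since the statement concerns simple bipartite graphs this suffices; extending to multigraphs (as in the paper's Theorem~\ref{thm:galvin}) needs a way to disambiguate the ``common end'' of parallel edges, which can be done by an arbitrary tie-breaking but is not required here.
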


The problem of edge-coloring can be reduced to a special instance of
the problem of vertex-coloring via the line-graph.  Given a graph $H$,
the \emph{line-graph} ${\cal L}(H)$ of $H$ is the graph whose vertices
are the edges of $H$ and whose edges are the pairs of adjacent edges
of $H$.  Conversely, $H$ is called the \emph{root} graph of ${\cal L}
(H)$.  It is clear that $\chi({\cal L}(H)) =\chi'(H)$ and $ch({\cal
L}(H))=ch'(H)$.

\medskip

In a graph $G$, we say that a vertex $v$ is \emph{complete} to a set
$S\subseteq V(G)$ when $v$ is adjacent to every vertex in $S$, and
\emph{anticomplete} to $S$ when $v$ has no neighbor in $S$.  Given two
sets $S,T\subseteq V(G)$ we say that $S$ is \emph{complete} to $T$ is
every vertex in $S$ is adjacent to every vertex in $T$, and
\emph{anticomplete} to $T$ when no vertex in $S$ is adjacent to any
vertex in $T$.  The neighborhood of a vertex $v$ is denoted by
$N_G(v)$ (and the subscript $G$ may be dropped when there is no
ambiguity).  The complement of graph $G$ is denoted by $\overline{G}$.

A graph is \emph{cobipartite} if its complement is bipartite, in other
words if its vertex-set can be partitioned into at most two cliques.
We let $P_n$, $C_n$ and $K_n$ respectively denote the path, cycle and
complete graph on $n$ vertices.

Given any graph $F$, a graph $G$ is \emph{$F$-free} if no induced
subgraph of $G$ is isomorphic to $F$.  The \emph{claw} is the graph
with four vertices $a,b,c,d$ and edges $ab$, $ac$, $ad$; vertex $a$ is
called the \emph{center} of the claw.

A graph $G$ is \emph{perfect} if every induced subgraph $H$ of $G$
satisfies $\chi(H) = \omega(H)$.  A \emph{Berge} graph is any graph
that does not contain as an induced subgraph an odd cycle of length at
least five or the complement of an odd cycle of length at least five.
Chudnovsky, Robertson, Seymour, Thomas solved the long-standing and
famous problem known as the Strong Perfect Graph Conjecture by proving
the following theorem.
\begin{theorem}[\cite{Chudnovsky2006}]
A graph $G$ is perfect if and only if it is Berge.
\end{theorem}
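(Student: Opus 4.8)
The statement above is the Strong Perfect Graph Theorem, and I will not pretend it admits a short argument: the only known proof, due to Chudnovsky, Robertson, Seymour and Thomas, is well over a hundred pages long, and my plan is to reconstruct its overall architecture. The easy direction is that every perfect graph is Berge, argued by contraposition: an odd hole $C_{2k+1}$ with $k\ge2$ has $\chi=3>2=\omega$, and an odd antihole $\overline{C_{2k+1}}$ has $\chi=k+1>k=\omega$, so any graph containing one of these as an induced subgraph is imperfect. For the converse, that every Berge graph is perfect, I would again argue by contradiction: assume there is a \emph{minimal imperfect} graph $G$ --- imperfect, but with every proper induced subgraph perfect --- and aim to prove that $G$ is not Berge. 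Since every imperfect graph contains a minimal imperfect induced subgraph, and an induced subgraph of a Berge graph is Berge, this is enough.

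The argument then divides into three parts. First, single out the \emph{basic} perfect graphs --- bipartite graphs, line graphs of bipartite graphs, the complements of these two families, and double split graphs --- and check directly that each one is perfect (bipartite graphs trivially; line graphs of bipartite graphs via K\"onig's edge-colouring theorem; the complements via Lov\'asz's weak perfect graph theorem, which can be proved separately; double split graphs by a short explicit verification). In particular a minimal imperfect graph is not basic. Second, assemble the classical lemmas asserting that a minimal imperfect graph admits none of a list of structural decompositions: no star cutset (Chv\'atal), no homogeneous pair (Chv\'atal--Sbihi), no proper $2$-join (in essence Cornu\'ejols and Cromme), and --- the genuinely new ingredient --- no \emph{balanced} skew partition. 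Establishing this last lemma already takes real work; it is a weakening of Chv\'atal's Skew Partition Conjecture that is nevertheless strong enough for the proof.

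Third, and this is the core, prove the \emph{decomposition theorem}: every Berge graph is either basic, or one of $G,\overline{G}$ admits a proper $2$-join, or $G$ admits a balanced skew partition. Granting the first two parts, this closes everything: a minimal imperfect Berge graph would have to be basic or to admit one of the forbidden decompositions, which is impossible, so no Berge graph is imperfect.

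I expect the decomposition theorem to be by far the main obstacle. Its proof is an extensive case analysis organised around which of a handful of special induced configurations the Berge graph contains --- a \emph{prism} (two vertex-disjoint triangles joined by three vertex-disjoint paths), a \emph{pyramid} (essentially a subdivision of $K_4$), a long even hole with prescribed attachments, various \emph{wheels} --- and in each case one must either exhibit a suitable decomposition or refine the analysis until one appears. The bookkeeping is formidable, and it is here that auxiliary devices such as trigraphs and a precise notion of a configuration ``appearing'' in the graph are introduced to keep the nested inductions under control. I see no shortcut for this part, so the realistic plan is to carry it out step by step as in \cite{Chudnovsky2006}.
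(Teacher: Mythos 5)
The paper does not prove this statement at all: it is quoted as Theorem 1.3 purely as a citation of Chudnovsky, Robertson, Seymour and Thomas, and nothing in the paper depends on knowing its proof. So there is no internal argument to compare yours against. Your outline of the easy direction is complete and correct ($C_{2k+1}$ and $\overline{C_{2k+1}}$, $k\ge 2$, each have $\chi=\omega+1$, and perfection is hereditary), and your description of the architecture of the published proof --- reduce to a minimal imperfect graph, verify perfection of the five basic classes, show a minimal imperfect graph admits no proper $2$-join, no balanced skew partition, etc., and then prove the decomposition theorem that every Berge graph is basic or admits one of these decompositions --- is an accurate account of how \cite{Chudnovsky2006} actually proceeds.

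But as a \emph{proof} the proposal has a gap the size of the theorem itself, as you candidly acknowledge: the decomposition theorem and the lemma that no minimal imperfect graph has a balanced skew partition are only named, not established, and together they constitute essentially all of the mathematical content. A reviewer cannot credit ``carry it out step by step as in \cite{Chudnovsky2006}'' as a step. In the context of this paper the right move is simply to cite the result, exactly as the authors do. One small correction: the result that a minimal imperfect graph admits no proper $2$-join is due to Cornu\'ejols and Cunningham, not ``Cornu\'ejols and Cromme''.
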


The special case of the Strong Perfect Graph Conjecture concerning
claw-free graphs had been resolved much earlier by Parthasarathy and
Ravindra.
\begin{theorem}[Parthasarathy and Ravindra~\cite{Parthasarathy1976}]
Every claw-free Berge graph $G$ is perfect.
\end{theorem}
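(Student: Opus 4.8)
The fastest proof is to observe that this is an immediate consequence of the theorem of Chudnovsky, Robertson, Seymour and Thomas quoted above: every Berge graph is perfect, hence in particular every claw-free Berge graph is perfect. Since the Parthasarathy--Ravindra theorem predates that result and admits a much more elementary argument, I would instead give a self-contained proof, along the following lines.

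Suppose the statement fails and let $G$ be a claw-free Berge graph that is not perfect, with $|V(G)|$ as small as possible. Every induced subgraph of a claw-free Berge graph is again claw-free and Berge, so every proper induced subgraph of $G$ is perfect; that is, $G$ is a minimal imperfect graph. I would then invoke the standard structural properties of minimal imperfect graphs: by the results of Lov\'asz and Padberg, $G$ is connected and ``partitionable'', with $n := |V(G)| = \alpha\omega + 1$ where $\alpha := \alpha(G) \ge 2$ and $\omega := \omega(G) \ge 2$ (the inequalities hold because a complete graph and an edgeless graph are perfect); and by Chv\'atal's star-cutset lemma $G$ has no star cutset, in particular no clique cutset. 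Since in a claw-free graph the neighbourhood $N(v)$ of any vertex has stability number at most $2$, it is the union of two cliques, so $|N[v]| \le 2\omega-1 < n$; hence $B_v := V(G)\setminus N[v]$ is non-empty, and since $N[v]$ is a star with centre $v$, the star-cutset lemma forces $G[B_v]$ to be connected.

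The core of the proof is to combine claw-freeness with this rigidity to pin $G$ down exactly. Fixing a vertex $v$, I would analyse how the connected graph $G[B_v]$ attaches to the two cliques covering $N(v)$: a vertex of $B_v$ cannot have its neighbours ``spread out'' over $N(v)$ without creating a claw centred at one of those neighbours (taking $v$ and a suitable non-neighbour inside $N(v)$ as the other two leaves), and together with the absence of clique cutsets and the partition identities this constrains the adjacencies tightly. Carried out carefully, this analysis shows that a claw-free minimal imperfect graph must either satisfy $\omega = 2$ --- in which case $G$ is triangle-free, hence (being claw-free) of maximum degree at most $2$, hence an induced odd cycle of length $\ge 5$ --- or else be isomorphic to $\overline{C_7}$. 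In both cases $G$ contains an odd hole or an odd antihole, contradicting the hypothesis that $G$ is Berge. The main obstacle is precisely this last structural case analysis; the reductions preceding it are routine applications of well-known lemmas, and the final contradiction is immediate from the definition of a Berge graph.
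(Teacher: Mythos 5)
The paper does not prove this statement at all; it is quoted with a citation to Parthasarathy and Ravindra, so there is no internal proof to compare against. Your opening observation --- that the statement is an immediate corollary of the Strong Perfect Graph Theorem quoted directly above it --- is a complete and correct proof in the context of this paper, and by itself it settles the matter.

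Your self-contained sketch, however, is not yet a proof, and the structural claim it announces is incorrect. A minor gap first: from $\alpha(G[N(v)])\le 2$ you conclude that $N(v)$ is the union of two cliques, but a graph whose complement is triangle-free need not have bipartite complement ($C_5$ is triangle-free and not bipartite); to get the two-clique cover you must invoke Bergeness again, noting that a shortest odd cycle in the triangle-free graph $\overline{G[N(v)]}$ would be induced and hence yield an odd antihole in $G$. The more serious problem is that the entire ``core'' case analysis is deferred, and the dichotomy you claim it produces --- $\omega=2$ (so $G$ is an odd hole) or $G\cong\overline{C_7}$ --- is false: every odd antihole $\overline{C_{2k+1}}$ with $k\ge 3$ has stability number $2$, hence is claw-free, and is minimal imperfect, so $\overline{C_9},\overline{C_{11}},\dots$ are additional claw-free minimal imperfect graphs that your analysis would have to account for. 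They are of course not Berge, so the final contradiction would survive, but this shows that the intermediate structural statement you propose to prove is not the right one, and the genuinely hard part of the elementary argument is precisely the part you have omitted. Either carry out the full Parthasarathy--Ravindra analysis, or simply keep the one-line deduction from the Strong Perfect Graph Theorem.
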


Here we are interested in a restricted version of a question posed by
two of us
\cite{Gravier:1997:CNE:249460.249532,Gravier:1998:GWC:1378752.1378756},
asking whether it is true that every claw-free graph $G$ satisfies
$ch(G) = \chi(G)$.
\begin{conjecture}
Every claw-free perfect graph $G$ satisfies $ch(G) = \chi(G)$.
\end{conjecture}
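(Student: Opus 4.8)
The plan is to prove the equivalent statement that every claw-free perfect graph $G$ is $\omega(G)$-choosable; since $G$ is perfect we have $\chi(G)=\omega(G)$, and $\chi(G)\le ch(G)$ always holds, so $ch(G)\le\omega(G)$ will give equality. I would argue by induction on $|V(G)|$, using the decomposition theorem of Chv\'atal and Sbihi: every claw-free perfect graph either admits a clique cutset, or is elementary, or is peculiar. The three cases are treated separately, with the two base classes carrying almost all of the genuine content.

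For the clique-cutset case, let $K$ be a clique cutset and let $G_1,G_2$ be the two sides, each containing $K$. Every clique of $G$ lies inside one side, so $\omega(G)=\max(\omega(G_1),\omega(G_2))$ and in particular $|K|\le\omega(G)$. The naive attempt --- colour one side and extend across $K$ --- fails, because list-colouring does not behave as cleanly under clique cutsets as ordinary colouring: a proper $L$-colouring of $G_2$ need not realise a prescribed colouring of the clique $K$. I would therefore strengthen the induction hypothesis to a precolouring-extension statement: for every claw-free perfect graph $G$ and every clique $Q\subseteq V(G)$, any proper colouring of $Q$ extends to an $L$-colouring of $G$ whenever $|L(v)|\ge\omega(G)$ for $v\notin Q$ and the colours used on $Q$ belong to the respective lists. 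With this hypothesis the gluing is immediate: colour $G_1$, read off the colouring of $K$, and extend it into $G_2$. The real work is to verify this extension property for the base classes, exploiting the slack $\omega(G)\ge|Q|$.

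For the elementary case I would use the structure theorem of Maffray and Reed, which presents every elementary graph as an augmentation of the line graph $\mathcal L(B)$ of a bipartite multigraph $B$. On the line graph itself the statement is exactly Galvin's theorem \cite{Galvin1995153}: $ch(\mathcal L(B))=ch'(B)=\chi'(B)=\chi(\mathcal L(B))$. The plan is to encode an arbitrary list assignment on the elementary graph as a list-edge-colouring instance on $B$, so that the fibres (the cliques of edges of $B$ meeting at a common vertex) correspond to the proper-colouring constraints and the clique number becomes $\Delta(B)$; the augmentation operation, which substitutes cobipartite graphs along flat edges, is then absorbed by replacing single edges of $B$ with parallel edges and carrying the extra clique-lists through the same reduction. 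Galvin's theorem applied to this enlarged bipartite multigraph should yield both $\omega$-choosability and, with care in how the fibres meeting $Q$ are ordered, the precolouring-extension property.

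For the peculiar case I would argue directly. A peculiar graph has a rigid shape: its vertex set is covered by three cliques following the fixed Chv\'atal--Sbihi template, with only the sizes of the parts varying. Hence $\omega$-choosability reduces to a finite system of Hall-type conditions on how the colour lists distribute across the three cliques, which I would verify by a defect version of Hall's theorem, again tracking a prescribed clique $Q$ to obtain the extension property. The main obstacle, I expect, is the tight clique-cutset case where $\omega(G_2)=\omega(G)$ leaves no slack: there the precolouring-extension hypothesis is precisely what is needed, and it is also precisely what is hardest to maintain through the augmentation step for elementary graphs, so forcing the reduction to Galvin's theorem to respect a prescribed clique is the crux on which the whole induction rests.
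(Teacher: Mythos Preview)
The statement you are attempting to prove is a \emph{conjecture} in the paper, not a theorem; the paper does not prove it and only establishes the special case $\omega(G)\le 4$ (Theorem~\ref{theorem:claw-free-perfect}). So your proposal is not comparable to a proof in the paper --- there is none --- and must be judged on its own. On that score there are two genuine gaps.

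First, your treatment of the elementary case is incorrect. You write that the augmentation operation ``is absorbed by replacing single edges of $B$ with parallel edges''. This is false: replacing edges by parallel edges makes the corresponding set $X\cup Y$ a \emph{clique} in the line graph, whereas an augment is a cobipartite graph that by definition (see Theorem~\ref{thm:mafree}) contains at least one non-adjacent pair between $X$ and $Y$. Thus augmented elementary graphs are \emph{not} line graphs of bipartite multigraphs, and Galvin's theorem does not apply to them directly. The paper's entire Section~3, together with the induction on the number of augments in Theorem~\ref{thm-elementary-omega4}, is devoted to bridging precisely this gap for $\omega\le 4$; even there the extension to an augment can fail (the ``bad'' properties (\ref{badf}) and (\ref{badf2})), and one must go back and recolour using an auxiliary construction. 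There is no known way to do this for general $\omega$.

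Second, your strengthened induction hypothesis --- precolouring extension from an arbitrary clique $Q$ --- is not known to hold for elementary graphs and is essentially as hard as the conjecture itself. The paper does \emph{not} establish such a statement; instead, in Section~5 it analyses the precise structure of a minimal clique cutset $C$ inside an elementary or cobipartite atom (claims (\ref{cxy})--(\ref{dca2})), showing that $C$ sits in a very constrained way, and then invokes tailor-made lemmas (Lemmas~\ref{lem:c2}--\ref{lem:c3}) that handle those specific configurations with those specific reduced list sizes. Your plan to ``track a prescribed clique $Q$'' through the Galvin reduction has no content until the reduction itself is fixed, and as noted above it is broken. In short, the crux you correctly identify --- making the elementary case respect a prescribed clique --- is exactly the open part of the conjecture.
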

This conjecture was proved in \cite{Gravier2004211} for every
claw-free perfect graph $G$ with $\omega(G) \leq 3$.  Here we will
prove it for the case $\omega(G) \leq 4$.  Our main result is the
following.
\begin{theorem}\label{theorem:claw-free-perfect}
Let $G$ be a claw-free perfect graph with $\omega(G) \leq 4$. Then
$ch(G) = \chi(G)$.
\end{theorem}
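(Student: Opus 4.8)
The plan is to combine the decomposition of claw-free perfect graphs with the orientation/kernel method behind Galvin's theorem \cite{Galvin1995153}. It is enough to prove the a priori weaker statement that every claw-free perfect graph $G$ with $\omega(G)\le 4$ satisfies $ch(G)\le 4$: for $\omega(G)\le 3$ the equality $ch(G)=\chi(G)$ is already known \cite{Gravier2004211}, and for $\omega(G)=4$ one has $\chi(G)=\omega(G)=4\le ch(G)$ (as $G$ is perfect and $ch\ge\chi$ always), so $ch(G)\le 4$ forces $ch(G)=\chi(G)$. Choosability is unchanged on connected components and, as in \cite{Gravier2004211}, is preserved by clique cutsets, in the sense that $ch(G)=\max\{ch(G_1),ch(G_2)\}$ whenever $K$ is a clique cutset of $G$ with sides $G_1,G_2$; since induced subgraphs of a claw-free perfect graph with $\omega\le 4$ are again of this type, induction on $|V(G)|$ lets us assume that $G$ is connected and has no clique cutset. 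By the decomposition theorem of Chv\'atal and Sbihi, $G$ is then \emph{elementary} or \emph{peculiar}; since every peculiar graph has a clique of size at least $6$, the hypothesis $\omega(G)\le 4$ leaves only the elementary case.

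So the task is to show that every elementary graph $G$ with $\omega(G)\le 4$ is $4$-choosable, and I would do this through the engine of Galvin's proof. Recall the lemma of Bondy, Boppana and Siegel: a graph $H$ is $L$-colorable as soon as it has an orientation $D$ that is kernel-perfect (every induced subdigraph has a kernel) with $d^+_D(v)<|L(v)|$ for all $v$; and recall the theorem of Boros and Gurvich: in a perfect graph every clique-acyclic orientation (one in which every clique, viewed as a sub-tournament, has a sink) is kernel-perfect. Since an elementary graph is perfect, it therefore suffices to equip $G$ with a clique-acyclic orientation in which every out-degree is at most $3$. When $G$ is the line graph $L(B)$ of a bipartite multigraph $B$ this is precisely what Galvin's construction \cite{Galvin1995153} provides, starting from a proper $\Delta(B)$-edge-coloring of $B$: here $\Delta(B)=\omega(L(B))\le 4$, every clique of $L(B)$ is a star-clique, and so Galvin's orientation is clique-acyclic with all out-degrees at most $\Delta(B)-1\le 3$.

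The heart of the matter is the passage from line graphs of bipartite multigraphs to arbitrary elementary graphs. By the description of Maffray and Reed, an elementary $G$ is obtained from some $L(B)$ by replacing, for each vertex $v$ of $B$ in a suitable family, the star-clique $C_v$ (the set of edges of $B$ incident with $v$, which induces a clique of $L(B)$) by a cobipartite graph $G_v\supseteq C_v$ whose added vertices are adjacent only inside $G_v$, distinct blocks $G_v$ overlapping only in vertices coming from $L(B)$. Because $\omega(G)\le 4$, each $G_v$ is a union of two cliques of size at most $4$, so only finitely many local configurations of a block together with its attachment to the $L(B)$-skeleton can occur; for each of them one must extend the Galvin-type orientation into the block, orienting $G_v$ clique-acyclically (possible since $G_v$ is cobipartite, hence perfect), in agreement with the skeleton on $C_v$, and within the out-degree budget $3$. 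I expect this to be by far the most technical step, for two reasons: (i) an interface vertex, namely an edge $e=uv$ of $B$ lying in two blocks $G_u,G_v$, or in a block and an ordinary star-clique, receives out-neighbors from both sides, so the orientations inside the pieces meeting at $e$ must be chosen jointly in order to keep the total out-degree of $e$ at most $3$; and (ii) because a clique of $G$ may run across several blocks of the augmentation, clique-acyclicity must be checked globally rather than block by block. This is exactly where $\omega(G)\le 4$ is used in an essential way: it keeps the configurations small enough for a complete case analysis, and keeps the budget $3$ large relative to the sizes of the blocks. For $\omega\ge 5$ both of these fail, which is why the conjecture remains open in general.

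Finally, once such a clique-acyclic orientation with maximum out-degree at most $3$ is in place, the theorem of Boros and Gurvich makes it kernel-perfect and the lemma of Bondy, Boppana and Siegel shows that $G$ is colorable from any assignment of $4$-lists; hence $ch(G)\le 4$. Together with the clique-cutset reduction and the known case $\omega(G)\le 3$ \cite{Gravier2004211}, this yields Theorem~\ref{theorem:claw-free-perfect}.
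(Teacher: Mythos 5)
You assert that $ch(G)=\max\{ch(G_1),ch(G_2)\}$ across a clique cutset and attribute this to \cite{Gravier2004211}, but no such preservation lemma is proved there, and it cannot simply be cited: in list coloring one cannot permute colors, so an $L$-coloring of $G_1$ and an $L$-coloring of $G_2$ need not agree on the cutset, and there is no a priori reason that the particular coloring of $G_1$ you obtain extends into $G_2$. This is exactly the difficulty that occupies most of the paper: the actual argument takes an \emph{extremal} clique cutset $C$, colors $G_1=G[C\cup A_1]$ by induction, and then must extend that specific coloring into the elementary atom $G_2$, which requires a detailed analysis of how $C$ sits inside $G_2$ (claims (\ref{cxy}), (\ref{ccu}), (\ref{dc}), and the case split on whether $G_2$ is cobipartite) together with all of the lemmas of Section~3 about cobipartite graphs whose lists have been depleted by the colors already used on the $A_1$-side. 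A second outright error is your dismissal of the peculiar case: the largest clique forced by the definition of a peculiar graph is $Q_i\cup A_{i+1}\cup B_{i+1}\cup A_{i+2}$, of size $4$, not $6$, and there is a (unique) peculiar graph with $\omega=4$ on nine vertices; the paper must and does treat it separately (Lemma~\ref{lem:colpec4}), together with Lemma~\ref{lem:peculiar} showing peculiar graphs do not occur as proper pieces of larger connected graphs in this class.

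For the elementary atoms your plan (a clique-acyclic orientation with all out-degrees at most $3$, then Boros--Gurvich and Bondy--Boppana--Siegel) is a genuinely different route from the paper's Theorem~\ref{thm-elementary-omega4}, which instead inducts on the number of augments and extends a Galvin coloring of the underlying line graph across each augment via the cobipartite lemmas. The kernel approach is attractive, but the existence of such an orientation on an \emph{augmented} line graph is precisely the content of the theorem in that approach, and you do not establish it: vertices of an elementary graph with $\omega=4$ can have degree $6$, cliques can straddle an augment and the surrounding skeleton, and the interface constraints you yourself flag in (i)--(ii) are real obstructions that would need a completed case analysis. So both halves of the argument --- crossing clique cutsets and handling elementary atoms --- are missing, and the first rests on a false premise.
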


Our proof is based on a decomposition theorem for claw-free perfect
graphs due to Chv\'atal and Sbihi \cite{Chvatal1988154}.  They proved
that every claw-free perfect graph either admits a clique cutset or
belongs to two specific classes of graphs, which we defined precisely
below.

\begin{definition}[Clique cutset]
A \emph{clique cutset} in a graph $G$ is a clique $C$ of $G$ such that
$G \setminus C$ is disconnected. A \emph{minimal clique cutset} is a
clique cutset that does not contain another clique cutset.
\end{definition}
If $C$ is a minimal clique cutset in a graph $G$ and $A_1, \ldots,
A_k$ are the vertex-sets of the components of $G \setminus C$, we
consider that $G$ is decomposed into the collection of induced
subgraphs $G[A_i \cup C]$ for $i = 1, \ldots, k$.  These subgraphs
themselves may admit clique cutsets, so the decomposition (via minimal
clique cutsets) can be applied further.  This decomposition can be
represented by a tree, where each non-leaf node corresponds to an
induced subgraph $G'$ of $G$ and a minimal clique cutset $C'$ of $G'$,
and the children of the node are the induced subgraphs into which $G'$
is decomposed along $C'$.  The leaves of $T$ are indecomposable
subgraphs of $G$ (subgraphs that have no clique cutset), which we call
\emph{atoms}.  (This tree may not be unique, depending on the choice
of a clique cutset at each node.)  Whitesides \cite{whi} and Tarjan
\cite{tar} proved that for every graph $G$ on $n$ vertices every
clique-cutset decomposition tree has at most $n$ leaves and that such
a decomposition can be obtained in polynomial time $O(n^3)$.  A nice
feature is that every graph $G$ admits an \emph{extremal} clique
cutset, that is, a minimal clique cutset $C$ such that there is a
component $H$ of $G\setminus C$ such that $G[V(H)\cup C]$ is an atom.

\begin{definition}[Elementary graph \cite{Chvatal1988154}]
A graph is \emph{elementary} if its edges can be colored with two
colors (one color on each edge) in such a way that every induced
two-edge path has its two edges colored differently.
\end{definition}

\begin{definition}[Peculiar graph \cite{Chvatal1988154}]
A graph $G$ is \emph{peculiar} if $V(G)$ can be partitioned into nine
sets $A_i, B_i, Q_i$ ($i=1,2,3$) that satisfy the following
properties for each $i$, where subsbcripts are understood modulo~$3$:
\begin{itemize}
\item
Each of the nine sets is non-empty and induces a clique.
\item
$A_i$ is complete to $B_i\cup A_{i+1}\cup A_{i+2} \cup B_{i+2}$ and
not complete to $B_{i+1}$.
\item
$B_i$ is complete to $A_i\cup B_{i+1}\cup B_{i+2} \cup A_{i+1}$ and
not complete to $A_{i+2}$.
\item
$Q_i$ is complete to $A_{i+1}\cup B_{i+1}\cup A_{i+2}\cup B_{i+2}$ and
anticomplete to $A_i\cup B_i\cup Q_{i+1}\cup Q_{i+2}$.
\end{itemize}
We say that $(A_1, B_1, A_2, B_2, A_3, B_3, Q_1, Q_2, Q_3)$ is a
peculiar partition of $G$.
\end{definition}

\begin{theorem}[Chv\'atal and Sbihi \cite{Chvatal1988154}]
\label{thm:chvsbi}
Every claw-free perfect graph either has a clique cutset or is a
peculiar graph or an elementary graph.
\end{theorem}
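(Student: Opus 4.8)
The plan is to take a claw-free perfect graph $G$ that has no clique cutset and to show that it must be elementary or peculiar, by recasting elementariness as a $2$-colourability condition and then analysing what goes wrong when that condition fails. To this end I would introduce the auxiliary \emph{edge-conflict graph} $\mathcal{C}(G)$ whose vertex set is $E(G)$ and in which two edges are adjacent exactly when, together, they induce a two-edge path (an induced $P_3$) in $G$. By the definition of elementary, $G$ is elementary if and only if the edges of $G$ can be $2$-coloured so that the two edges of every induced $P_3$ receive different colours, which is precisely the statement that $\mathcal{C}(G)$ is bipartite. Thus the whole theorem reduces to the following: if $G$ is claw-free, perfect, and has no clique cutset, and if $\mathcal{C}(G)$ is \emph{not} bipartite, then $G$ is peculiar.

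The first substantive ingredient is local structure. Because $G$ is claw-free, for every vertex $v$ the subgraph $G[N(v)]$ contains no independent set of size $3$, so $\alpha(G[N(v)]) \le 2$; since $G$ is perfect this subgraph is a perfect graph with independence number at most $2$, hence cobipartite, i.e.\ $N(v)$ partitions into two cliques. I would use this repeatedly, together with the fact that $G$ contains no odd hole and no odd antihole, to control how cliques overlap and how non-edges propagate. In particular I expect to prove small \emph{gluing} lemmas of the form: if two maximal cliques meet, their union is tightly constrained, and any vertex seeing part of such a configuration is forced to see a prescribed further part, on pain of creating a claw, an odd hole, or a clique cutset.

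Assuming $\mathcal{C}(G)$ is not bipartite, I would select a \emph{shortest} odd cycle $e_1 e_2 \cdots e_m$ in $\mathcal{C}(G)$ (so $m$ is odd and, by minimality, the cycle is chordless in $\mathcal{C}(G)$), and read it back into $G$ as a closed walk of edges in which consecutive edges form induced $P_3$'s. Minimality forces this configuration to be short and rigid, and claw-freeness and perfection then pin down the vertices involved and their common neighbourhoods. The aim of this stage is to extract a threefold-symmetric \emph{twist} — three pairs of cliques $(A_i,B_i)$ that are almost complete to one another but fail completeness in a cyclically rotating way — which matches the index-mod-$3$ pattern of the peculiar partition and is the true source of the odd obstruction. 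Feeding these cliques back through the local gluing lemmas, and invoking the hypothesis that $G$ has no clique cutset to guarantee that the connector sets $Q_i$ are non-empty and behave as required, I would then verify one by one the completeness and anticompleteness conditions of the definition of peculiar, producing a peculiar partition $(A_1,B_1,A_2,B_2,A_3,B_3,Q_1,Q_2,Q_3)$ of $V(G)$.

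The main obstacle is this last stage: turning a single minimal odd obstruction into the full, globally defined nine-set partition. The difficulty is twofold. First, one must show that the three twisted clique-pairs detected locally actually span all of $V(G)$ and that every vertex is forced into exactly one of the nine sets; this is where the absence of a clique cutset does the heavy lifting, since it forbids the partition from stopping early and leaving a separable remainder. Second, each completeness/anticompleteness relation must be checked in every case, and each potential violation has to be ruled out by exhibiting a claw, an odd hole, an odd antihole, or a clique cutset — a long and delicate case analysis that constitutes the genuine content of the theorem. I would expect almost all of the work to lie here, with the reduction to $\mathcal{C}(G)$ and the cobipartite-neighbourhood lemma serving only as the comparatively easy scaffolding.
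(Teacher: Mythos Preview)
The paper does not prove this theorem. Theorem~\ref{thm:chvsbi} is quoted from Chv\'atal and Sbihi \cite{Chvatal1988154} and used as a black-box structural tool; no argument for it appears anywhere in the paper. There is therefore nothing here to compare your proposal against.

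On its own merits: the reformulation of elementariness as bipartiteness of the edge-conflict graph $\mathcal{C}(G)$ is correct and is the natural entry point, and the cobipartite-neighbourhood observation is fine. But what you have written is a plan, not a proof. You explicitly defer the entire substance (``I would expect almost all of the work to lie here'') to an unexecuted case analysis. Nothing in the sketch shows that a shortest odd cycle in $\mathcal{C}(G)$ can actually be unwound into the nine-set peculiar partition: you give no mechanism by which the three $Q_i$ sets appear, no reason they are pairwise anticomplete, and no argument that the ``not complete'' (as opposed to anticomplete) relations between $A_i$ and $B_{i+1}$ emerge with exactly the required cyclic pattern. The original Chv\'atal--Sbihi proof is long precisely because this extraction is delicate; your outline names the difficulty but does not resolve it.
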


The structure of peculiar graphs is clear from their definition.
Concerning elementary graphs, their structure was elucidated by
Maffray and Reed \cite{Maffray1999134} as follows.  Let us say that an
edge is \emph{flat} if it is not contained in a triangle.

\begin{definition}[Flat edge augmentation]
Let $xy$ be a flat edge in a graph $G$, and let $A$ be a cobipartite
graph such that $V(A)$ is disjoint from $V(G)$ and $V(A)$ can be
partitioned into two cliques $X,Y$.  We obtain a new graph $G'$ by
removing $x$ and $y$ from $G$ and adding all edges between $X$ and
$N_G(x) \setminus \{y\}$ and all edges between $Y$ and $N_G(y)
\setminus \{x\}$.  This operation is called \emph{augmenting} the flat
edge $xy$ with the cobipartite graph $A$.  In $G'$ the pair $(X,Y)$ is
called the \emph{augment}.
\end{definition}
When $x_1y_1, \ldots, x_ky_k$ are pairwise non-adjacent flat edges in
a graph $G$, and $A_1, \ldots, A_k$ are pairwise vertex-disjoint
cobipartite graphs, also vertex-disjoint from $G$, one can augment
each edge $x_iy_i$ with the graph $A_i$.  Clearly the result is the
same whatever the order in which the $k$ operations are performed.  We
say that the resulting graph is an \emph{augmentation} of $G$.

\begin{theorem}[Maffray and Reed \cite{Maffray1999134}]
\label{thm:mafree}
A graph $G$ is elementary if and only if it is an augmentation of the
line-graph $H$ of a bipartite multigraph $B$.  Moreover we may assume
that each augment $A_i$ satisfies the following:
\begin{itemize}
\item
There is at least one pair of non-adjacent vertices in $A_i$,
\item
The bipartite graph whose vertex-set is $X_i\cup Y_i$ and whose edges
are the edges of $A_i$ with one end in $X_i$ and one in $Y_i$ is
connected (and consequently both $|X_i|,|Y_i|\ge 2$).
\end{itemize}
\end{theorem}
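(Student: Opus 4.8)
Since the statement is an equivalence, I would prove the two implications separately and treat the normalisation in the \emph{moreover} part at the end.

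\emph{Augmentations of such line-graphs are elementary.} First I would check that the line-graph $H=\mathcal L(B)$ of a bipartite multigraph $B$, with sides $U$ and $W$, is elementary. Colour an edge of $H$ \emph{red} when the two corresponding edges of $B$ meet in $U$, and \emph{blue} when they meet in $W$ (for parallel edges of $B$, which lie in triangles of $H$, the choice is arbitrary). If some induced two-edge path $e_1e_2e_3$ had both edges red, then $e_2$ would share its unique $U$-endpoint with both $e_1$ and $e_3$, forcing $e_1\sim e_3$ and contradicting that the path is induced; the blue case is symmetric, so every induced $P_3$ is bichromatic. Then I would show that augmenting a flat edge preserves elementariness. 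The key preliminary observation is that if $xy$ is flat in an elementary graph with a valid colouring $c$, then every edge from $x$ to $N(x)\setminus\{y\}$ receives the colour opposite to $c(xy)$ (each path $u\,x\,y$ with $u\in N(x)\setminus\{y\}$ is an induced $P_3$), and consequently $N(x)\setminus\{y\}$ is a clique, and symmetrically for $y$. Using this, given an augment $(X,Y)$ I would colour the edges inside $X$ and inside $Y$ with the colour opposite to $c(xy)$, the edges between $X$ and $Y$ with $c(xy)$, and each edge from $X$ (resp. $Y$) to the outside with the uniform colour of the corresponding edge at $x$ (resp. $y$); a short case analysis over the induced $P_3$'s of the augmented graph, using flatness to control adjacencies across the augment, shows this colouring is valid.

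\emph{Every elementary graph arises this way.} This is the substantial direction. Starting from an elementary graph $G$ with a valid colouring $c$, I would first isolate the cobipartite pieces that play the role of augments: a set $S=X\cup Y$ where $X$ and $Y$ are each a monochromatic clique, $S$ is cobipartite, and every vertex outside $S$ is complete to $X$ and anticomplete to $Y$, complete to $Y$ and anticomplete to $X$, or anticomplete to all of $S$. Each such homogeneously attached cobipartite set is exactly the trace of an augment, and collapsing it back to a single flat edge (retaining the colours of the external edges) yields a smaller elementary graph. Iterating, I obtain a \emph{reduced} elementary graph $G_0$ together with a list of augments, so that $G$ is an augmentation of $G_0$.

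The heart of the argument, and the step I expect to be the main obstacle, is to prove that the reduced graph $G_0$ is the line-graph of a bipartite multigraph. The plan is to show that in $G_0$ the two colour classes are each a disjoint union of cliques, so that the red and blue cliques form a Krausz-type partition of $E(G_0)$ in which every vertex lies in at most one red clique and at most one blue clique; introducing one root vertex in $U$ for each red clique and one in $W$ for each blue clique then exhibits $G_0=\mathcal L(B)$ for a bipartite multigraph $B$. The delicate point is precisely that after reduction no triangle carries two edges of one colour and one of the other, since such mixed triangles are exactly what the cobipartite augments absorb; making this rigorous, i.e.\ showing that every obstruction to a clean clique partition has already been removed inside some augment, is where the careful structural analysis is needed. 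Finally, for the \emph{moreover} part I would normalise the augments: an augment that is a clique produces only a blow-up that can be recorded as parallel edges of $B$ and may therefore be discarded, and an augment whose $X$--$Y$ bipartite graph is disconnected can be split into smaller augments. After these reductions every retained augment contains a non-adjacent pair and has a connected $X$--$Y$ bipartite graph, which forces $|X_i|,|Y_i|\ge 2$.
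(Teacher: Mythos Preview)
The paper does not prove this theorem; it is quoted verbatim as a structural result of Maffray and Reed \cite{Maffray1999134} and used as a black box. There is therefore no ``paper's own proof'' to compare your proposal against.

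That said, your outline is a sensible sketch of how the Maffray--Reed argument goes. The easy direction (augmentations of line-graphs of bipartite multigraphs are elementary) is essentially complete as you wrote it. For the hard direction, your plan---peel off homogeneously attached cobipartite pieces as augments, then recognise the residue as a line-graph via a Krausz-type clique partition indexed by the two colour classes---is the right shape, and you correctly flag the crux: showing that after reduction no monochromatic triangle survives with a differently-coloured third edge, so that each colour class really is a disjoint union of cliques. What you have not supplied is an argument for that step; in the original paper this is where most of the work lies, and it requires a careful inductive or structural analysis rather than a one-line observation. Your sketch is honest about this gap, but as written it is an outline rather than a proof.
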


In a directed graph $D$, for every vertex $v$ we let $d^+(v)$ denote
the number of vertices $w$ such that $vw$ is an arc of $D$.
\begin{theorem}[Galvin \cite{Galvin1995153}]\label{thm:galvin}
Let $G$ be the line-graph of a bipartite graph $B$, where $V(B)$ is
partitioned into two stable set $X,Y$.  Let $f$ be an
$\omega(G)$-coloring of the vertices of $G$, with colors
$1,2,\ldots,\omega(G)$.  Let $D$ be the directed graph obtained from
$G$ by directing every edge $uv$ as follows, assuming that
$f(u)<f(v)$: when the common end of edges $u,v$ in $B$ is in $X$, then
give the orientation $u\rightarrow v$, and when it is in $Y$ give the
orientation $u\leftarrow v$.  Assume that $L$ is a list assignment on
$V(G)$ such that every vertex $v$ of $G$ satisfies $|L(v)|\ge
d_D^+(v)+1$.  Then $G$ is $L$-colorable.
\end{theorem}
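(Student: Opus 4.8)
The plan is to use the \emph{kernel method}. Recall that a \emph{kernel} of a directed graph $D$ is a set $S$ of vertices that is independent (no arc joins two vertices of $S$) and \emph{absorbing} (every vertex outside $S$ has an arc pointing into $S$); call $D$ \emph{kernel-perfect} if every induced subdigraph of $D$ has a kernel. I would first isolate the following general lemma, which does not use the line-graph structure at all: if $D$ is a kernel-perfect orientation of a graph $G$ and $L$ is a list assignment with $|L(v)| \ge d_D^+(v) + 1$ for every $v$, then $G$ is $L$-colorable. The theorem then reduces to verifying that the specific orientation $D$ described in the statement is kernel-perfect.

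The general lemma I would prove by induction on $|V(G)|$. Choose any color $c$ appearing in some list (lists are non-empty since $|L(v)| \ge d_D^+(v)+1 \ge 1$), and let $V_c = \{v : c \in L(v)\}$. By kernel-perfectness the induced subdigraph $D[V_c]$ has a kernel $S$; since $S$ is independent in $G$, all of $S$ may be colored with $c$. I then delete $S$ and remove $c$ from every remaining list. A vertex $v \in V_c \setminus S$ loses exactly one color from its list but, being absorbed by $S$, also loses at least one out-neighbor, so the inequality $|L(v)| \ge d^+(v)+1$ survives; a vertex with $c \notin L(v)$ keeps its list and can only lose out-neighbors. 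Thus the hypothesis is preserved and the induction completes the coloring.

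The heart of the argument is showing that $D$ is kernel-perfect, and here I would invoke the stable marriage theorem, following Galvin. Fix any subset $W$ of vertices of $G$, that is, any subset of the edges of $B$; I must exhibit a kernel of $D[W]$. Interpret $X$ and $Y$ as the two sides of a marriage market whose acceptable pairs are the edges in $W$, and assign strict preferences by declaring that each vertex of $X$ prefers its incident edges of \emph{larger} $f$-value while each vertex of $Y$ prefers its incident edges of \emph{smaller} $f$-value; these lists are strict because $f$ is a proper coloring, so edges sharing an endpoint receive distinct colors. Let $M$ be a stable matching, which exists by the Gale--Shapley algorithm. Since $M$ is a matching it is independent in $G$, and I claim it is absorbing in $D[W]$: given $e = xy \in W \setminus M$, stability forbids $e$ from being a blocking pair, so $x$ weakly prefers its $M$-partner to $e$ or $y$ does. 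Translating the preference through the orientation rule---an $X$-common edge is oriented from smaller to larger $f$-value, a $Y$-common edge from larger to smaller---shows that in either case $e$ sends an arc to the corresponding $M$-edge. Hence $M$ is a kernel of $D[W]$.

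Combining the two parts yields the theorem. The main obstacle is not the induction, which is routine bookkeeping with the degree/list inequality, but the correct setup of the marriage market: one must orient the preferences of $X$ and $Y$ in \emph{opposite} directions relative to $f$, precisely so that the stability condition of $M$ coincides with the absorbing condition of a kernel, and one must use the proper-coloring hypothesis to guarantee that all preference lists are strict, so that a stable matching is guaranteed to exist.
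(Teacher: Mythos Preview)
Your argument is correct and is essentially Galvin's original proof: the reduction to kernel-perfectness via the standard list-degree induction, followed by the stable-matching construction of a kernel in each induced subdigraph. The paper itself does not give a proof of this theorem; it merely quotes the result from \cite{Galvin1995153} and uses it as a black box, so there is nothing to compare against beyond noting that you have reproduced Galvin's method faithfully.
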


Let $G$ be a graph and let $L$ be a list assignment on $V(G)$. For
every set $S\subseteq V(G)$ we set $L(S) = \bigcup_{x \in S} L(x)$.
If $f$ is a coloring of $G$, we set $f(S)=\{f(x)\mid x\in S\}$. If
$H$ is an induced subgraph of $G$, we may also write $L(H)$ and $f(H)$
instead of $L(V(H))$ and $f(V(H))$ respectively.

\medskip

For the sake of completeness we recall the classical theorems of
K\H{o}nig and Hall.  Let $X_1, \ldots, X_k$ be a family of sets.  A
\emph{system of distinct representatives} for the family is a subset
$\{x_1, \ldots, x_k\}$ of $k$ distinct elements of $X_1\cup\cdots\cup
X_k$ such that $x_i\in X_i$ for all $i=1, \ldots, k$.  Note that if
$G$ is a graph and $L$ is a list assignment on $V(G)$, and the family
$\{L(v)\mid v\in V(G)\}$ admits a system of distinct representatives,
then this is an $L$-coloring of $G$.
\begin{theorem}[Hall's theorem \cite{Hall,Sch}]
A family $\cal F$ of $k$ sets has a system of distinct representatives
if and only if, for all $\ell\in\{1,\ldots,k\}$, the union of any
$\ell$ members of $\cal F$ has size at least $\ell$.
\end{theorem}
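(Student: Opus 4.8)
The plan is to prove the two implications separately, the forward direction being immediate and the sufficiency of Hall's condition being handled by induction on the number $k$ of sets in the family. For necessity, suppose $\{x_1, \ldots, x_k\}$ is a system of distinct representatives with $x_i \in X_i$ for each $i$. Given any indices $i_1, \ldots, i_\ell$, the elements $x_{i_1}, \ldots, x_{i_\ell}$ are pairwise distinct and each lies in $X_{i_1} \cup \cdots \cup X_{i_\ell}$, so this union has at least $\ell$ elements; this is exactly Hall's condition. Thus the existence of a system of distinct representatives forces the condition.

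For sufficiency I would induct on $k$. The base case $k=1$ is clear, since Hall's condition gives $|X_1| \ge 1$ and any element of $X_1$ represents it. For the inductive step I assume the theorem for all families of fewer than $k$ sets, that $X_1, \ldots, X_k$ satisfy Hall's condition, and I distinguish two cases according to how tightly the condition is met on proper subfamilies. In the first case I suppose that \emph{every} union of $\ell$ sets with $1 \le \ell \le k-1$ has size at least $\ell+1$. Here I pick any element $x_k \in X_k$ and set $X_i' = X_i \setminus \{x_k\}$ for $i < k$. Deleting a single element lowers the size of any union of $\ell$ of these sets by at most one, from at least $\ell+1$ to at least $\ell$, so $X_1', \ldots, X_{k-1}'$ still satisfy Hall's condition. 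By induction they admit a system of distinct representatives, none of which equals $x_k$, and adjoining $x_k$ as the representative of $X_k$ completes a system for the whole family.

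In the second case some family of $\ell$ sets with $1 \le \ell \le k-1$ has union of size exactly $\ell$; relabelling, I may assume these are $X_1, \ldots, X_\ell$ and set $U = X_1 \cup \cdots \cup X_\ell$, so $|U| = \ell$. These $\ell$ sets satisfy Hall's condition, so by induction they have a system of distinct representatives, which consists of $\ell$ distinct elements of $U$ and hence equals $U$ exactly. For the remaining sets I work in the ground set with $U$ deleted, putting $X_j' = X_j \setminus U$ for $j = \ell+1, \ldots, k$. The key point is that these $k-\ell$ sets again satisfy Hall's condition: for any $T \subseteq \{\ell+1, \ldots, k\}$, applying the hypothesis to the index set $\{1,\ldots,\ell\} \cup T$ gives $\left|U \cup \bigcup_{j \in T} X_j\right| \ge |T| + \ell$, and subtracting $|U| = \ell$ yields $\left|\bigcup_{j \in T} X_j'\right| \ge |T|$. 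By induction the primed family has a system of distinct representatives, whose elements avoid $U$; combining it with the representatives of $X_1, \ldots, X_\ell$, which lie inside $U$, produces $k$ pairwise distinct representatives, one for each original set.

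The main obstacle is the bookkeeping in the second case: one must confirm both that the critical subfamily is exhausted exactly by its representatives, which is what forces its system to coincide with $U$, and that Hall's condition survives the passage from $X_j$ to $X_j' = X_j \setminus U$. The displayed inequality is the crux of the whole argument; once it is established, the disjointness of $U$ from the representatives chosen for the primed family guarantees that the two partial systems merge into a genuine system of distinct representatives, and the induction goes through.
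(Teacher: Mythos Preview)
Your proof is correct and is in fact the classical inductive proof of Hall's theorem. The paper, however, does not prove this statement at all: it merely recalls it as a well-known result, citing \cite{Hall,Sch}, and uses it as a tool throughout the later arguments. So there is no ``paper's own proof'' to compare against; your argument stands on its own as a complete and standard treatment.
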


A \emph{matching} in a graph $G$ is a set of pairwise non-incident
edges.
\begin{theorem}[K\H{o}nig's theorem \cite{Sch}]
In a bipartite graph on $n$ vertices, let $\mu$ be the size of a
maximum matching and $\alpha$ be the size of a maximum stable set.
Then $\mu+\alpha=n$.
\end{theorem}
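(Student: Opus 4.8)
The plan is to route the argument through the minimum vertex cover and combine two ingredients: a trivial complementation identity valid in every graph, and the classical K\H{o}nig min-max equality $\mu=\tau$ for bipartite graphs, which I would deduce from Hall's theorem (already available above). Throughout, let $\tau$ denote the minimum size of a \emph{vertex cover} of $G$, that is, of a set $C\subseteq V(G)$ meeting every edge.

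First I would record the complementation identity. A set $S$ is stable if and only if $V(G)\setminus S$ is a vertex cover; since $|V(G)\setminus S|=n-|S|$, maximizing $|S|$ amounts to minimizing the size of a vertex cover, giving $\alpha=n-\tau$, i.e.\ $\alpha+\tau=n$. It therefore suffices to prove $\mu=\tau$. The inequality $\mu\le\tau$ is immediate: the edges of a maximum matching are pairwise vertex-disjoint, so any vertex cover must spend a distinct vertex on each of them.

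For the reverse inequality $\tau\le\mu$, I would write $V(G)=X\cup Y$ with $X,Y$ stable, fix a minimum vertex cover $C$, and set $A=C\cap X$ and $B=C\cap Y$, so $\tau=|A|+|B|$. Since $C$ covers all edges, there is no edge between $X\setminus A$ and $Y\setminus B$. The key claim is that the bipartite graph $G_A$ between $A$ and $Y\setminus B$ satisfies Hall's condition for $A$: otherwise some $S\subseteq A$ has $|N_{G_A}(S)|<|S|$ with $N_{G_A}(S)\subseteq Y\setminus B$, and then the exchanged set $C'=(C\setminus S)\cup N_{G_A}(S)$ is a vertex cover with $|C'|=|C|-|S|+|N_{G_A}(S)|<|C|$, contradicting minimality of $C$. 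By the symmetric statement, the bipartite graph between $B$ and $X\setminus A$ satisfies Hall's condition for $B$. Applying Hall's theorem twice yields a matching $M_A$ saturating $A$ inside $A\cup(Y\setminus B)$ and a matching $M_B$ saturating $B$ inside $B\cup(X\setminus A)$. These two vertex sets are disjoint, so $M_A\cup M_B$ is a matching of size $|A|+|B|=\tau$, whence $\mu\ge\tau$.

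The main obstacle is verifying that the exchanged set $C'$ remains a vertex cover; this is the only point where bipartiteness is used. Concretely one checks that every edge $uv$ (with $u\in X$, $v\in Y$) survives: if $u\in A\setminus S$ it is covered by $u$; if $u\in X\setminus A$ then minimality of reasoning about $C$ forces $v\in B\subseteq C'$; and if $u\in S$ then either $v\in B$ or $v\in N_{G_A}(S)$, so $v\in C'$. Granting this, the two displayed inequalities give $\mu=\tau$, and combining with $\alpha+\tau=n$ yields $\mu+\alpha=n$, as required.
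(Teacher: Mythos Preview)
The paper does not prove this theorem; it is stated as a classical result with a citation to \cite{Sch} and used as a black box later on. Your argument is the standard textbook proof and is correct: the complementation identity $\alpha+\tau=n$ holds in any graph, the weak-duality inequality $\mu\le\tau$ is immediate, and the reverse inequality is obtained by splitting a minimum cover $C=A\cup B$ along the bipartition, checking Hall's condition on each side (the exchange argument with $C'$ is sound, since $N_{G_A}(S)\subseteq Y\setminus B$ is disjoint from $C\setminus S\subseteq (A\setminus S)\cup B$), and patching together the two saturating matchings on disjoint vertex sets.

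One cosmetic point: in your case analysis for $u\in X\setminus A$, the phrase ``minimality of reasoning about $C$'' is garbled; what you use there is simply that $C$ is a vertex cover and $u\notin C$, forcing $v\in C\cap Y=B$. Minimality of $C$ is only invoked when deriving the contradiction from $|C'|<|C|$.
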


\section{Peculiar graphs}

\begin{lemma}\label{lem:peculiar}
Let $G$ be a connected claw-free graph that contains a peculiar
subgraph, and assume that $G$ is also $C_5$-free.  Then $G$ is
peculiar.
\end{lemma}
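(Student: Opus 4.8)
The plan is to show that a peculiar subgraph $H \subseteq G$ is in fact an induced subgraph on all of $V(G)$, i.e.\ that no vertex of $G$ lies outside $H$ and that $H$ is already induced with the required non-edges. Let $(A_1,B_1,A_2,B_2,A_3,B_3,Q_1,Q_2,Q_3)$ be a peculiar partition of $H$. First I would record the elementary structural facts about a peculiar graph that will be used repeatedly: each of $A_i\cup B_i$ together with the various complete adjacencies produces large cliques, and the graph is self-complementary in a suitable sense (the map $A_i\leftrightarrow B_i$, $Q_i\leftrightarrow Q_i$ with indices reversed is an anti-automorphism-like symmetry), so statements proved for the $A$'s transfer to the $B$'s. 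I would also note the key small obstructions living inside $H$: for instance a vertex $a\in A_i$ and $b\in B_{i+1}$ with $ab\notin E$, together with a common neighbour, give a $P_3$, and $H$ contains no induced $C_5$ and no claw (this is essentially forced by perfectness and claw-freeness, but since we only assume $H$ is peculiar we may need to check $H$ itself is claw-free and $C_5$-free, which is a direct verification from the definition).

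\medskip

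The heart of the argument is the following: take any vertex $v\in V(G)\setminus V(H)$ and derive a contradiction. Since $G$ is connected, I may assume $v$ has a neighbour in $H$. The strategy is a neighbourhood-analysis: I would determine $N_H(v)$ and show that whatever it is, either a claw or an induced $C_5$ appears. Concretely, consider the three ``layers'' $A_i\cup B_i$ (for $i=1,2,3$). Using claw-freeness, $v$'s non-neighbourhood inside any clique of $H$ must be a clique, and $v$'s neighbourhood must meet each maximal clique in a prescribed pattern; in particular if $v$ misses two non-adjacent vertices $x,y$ of $H$ but is adjacent to a common neighbour of $x,y$, we get a claw centred at that common neighbour — so $v$ must be \emph{complete} to the neighbourhood of any vertex it is adjacent to, within the relevant cliques. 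Iterating this across the cyclic structure of the $A_i,B_i$ forces $v$ to be complete to almost everything, at which point the non-edges built into the peculiar partition (such as $A_i$ not complete to $B_{i+1}$) together with $v$ produce an induced $C_5$ on a carefully chosen $5$-set, using one vertex from each of several of the sets plus $v$. I expect the bookkeeping here — enumerating the possible intersection patterns of $N_H(v)$ with the nine sets up to the symmetry of the partition — to be the main obstacle, since there are several cases and each must be closed by exhibiting a claw or a $C_5$.

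\medskip

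Once I have shown $V(G)=V(H)$, it remains to check that $G$ has no edges beyond those of $H$, i.e.\ that the non-adjacencies in the peculiar partition are genuine in $G$; but those are non-adjacencies between specified sets, and adding any such edge is not at issue since $G[V(H)]=H$ already as an induced subgraph when $H$ is an \emph{induced} peculiar subgraph — so I should state the lemma's hypothesis carefully and assume $H$ is induced, which makes this step vacuous. Therefore $G=H$ is peculiar, as desired. A subtlety worth flagging: I must make sure the chosen $5$-cycles really are induced, i.e.\ that the two ``chords'' one would worry about are genuinely absent — this is where the specific non-completeness clauses ($A_i$ not complete to $B_{i+1}$, $B_i$ not complete to $A_{i+2}$) are essential, and I would pick the witnesses of these non-edges as the vertices of the $C_5$. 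Claw-freeness and $C_5$-freeness of $G$ are used in tandem throughout: claw-freeness to pin down $N_H(v)$ and $C_5$-freeness to finish, so neither hypothesis is dispensable.
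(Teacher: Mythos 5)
Your overall strategy has a genuine gap: you propose to take an arbitrary peculiar subgraph $H$ and show that any vertex $v\in V(G)\setminus V(H)$ with a neighbour in $H$ forces a claw or an induced $C_5$. That cannot work, because such a vertex need not create any forbidden structure at all. For a concrete counterexample to your plan, let $G$ itself be a peculiar graph in which $|Q_1|=2$, and let $H=G\setminus\{q\}$ for one vertex $q\in Q_1$; then $H$ is still peculiar, $q$ lies outside $H$ and has neighbours in $H$, yet $G$ contains no claw and no $C_5$. So the ``contradiction'' you aim for is simply not there. The paper instead chooses $H$ to be a \emph{maximal} peculiar subgraph and derives a contradiction to \emph{maximality}: the case analysis on $N_H(x)$ (organised first by which of the sets $Q_1,Q_2,Q_3$ the vertex $x$ meets) concludes in every case that $x$ can be absorbed into one of the nine sets — e.g.\ added to $Q_1$ or to $A_3$ — so that $V(H)\cup\{x\}$ induces a larger peculiar subgraph. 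Your write-up never invokes maximality and never contemplates this absorption step, so the argument as outlined cannot be completed.

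A second, related omission: even once maximality is in place, one of the cases requires not merely adding $x$ to a set but \emph{re-partitioning} the existing vertices (in the paper, the vertices of $A_1\setminus N(x)$ are moved from $A_1$ to $B_1$ before $x$ is placed in $A_3$). An argument that only tries to slot $x$ into one of the nine sets of the \emph{given} partition would get stuck in that case. The ingredients you do identify — using claw-freeness to constrain $N_H(v)$ within cliques, using the witnesses of the non-completeness clauses $a_i\in A_i$, $b_{i+1}\in B_{i+1}$ to build induced $C_5$'s, and exploiting the symmetry of the partition — are all genuinely used in the paper's proof, but they serve to pin down $N_H(x)$ precisely enough to perform the absorption, not to produce an outright forbidden subgraph.
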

\begin{proof}
Let $H$ be a peculiar subgraph of $G$ that is maximal.  If $H=G$ we
are done.  So let us assume that $H\neq G$.  Since $G$ is connected
there is a vertex $x$ of $V(G)\setminus V(H)$ that has a neighbor in
$H$.  Let $A_1, B_1, A_2, B_2, A_3, B_3, Q_1, Q_2, Q_3$ be nine
cliques that form a partition of $V(H)$ as in the definition of a
peculiar graph.  For $i=1, 2, 3$ we pick a pair of non-adjacent
vertices $a_i\in A_i$ and $b_{i+1}\in B_{i+1}$, and we pick any
$q_i\in Q_i$.  (All subscripts are modulo $3$.)

If $x$ has no neighbor in $Q_1\cup Q_2\cup Q_3$, then it has a
neighbor $a$ in $A_i\cup B_i$ for some $i$; but then $\{a, x,
q_{i+1}, q_{i+2}\}$ induces a claw. Therefore $x$ has a neighbor in
$Q_1\cup Q_2\cup Q_3$.

Suppose that $x$ has a neighbor $k$ in $Q_1$ and none in $Q_2\cup
Q_3$.  Then $x$ has no neighbor $z$ in $A_1\cup B_1$, for otherwise
$\{z, x, q_2, q_3\}$ induces a claw.  Also $x$ is adjacent to one of
$a_2, b_3$, for otherwise $\{x, k, a_2, b_3\}$ induces a claw; up to
symmetry we assume that $x$ is adjacent to $a_2$.  Then $x$ is
adjacent to every vertex $a\in A_3$, for otherwise $\{a_2, q_3, a,
x\}$ induces a claw; and to every vertex $y\in A_2\cup B_2\cup Q_1$,
for otherwise $\{a_3, y, x, q_2\}$ induces a claw; and to every vertex
$b\in B_3$, for otherwise $\{b_2, b, q_3, x\}$ induces a claw.  Hence
$x$ is complete to $A_2\cup B_2\cup A_3\cup B_3\cup Q_1$ and
anticomplete to $A_1\cup B_1\cup Q_2\cup Q_3$.  So $V(H)\cup\{x\}$
induces a peculiar subgraph of $G$, because $x$ can be added to $Q_1$,
a contradiction to the choice of $H$.

Therefore we may assume up to symmetry that $x$ has a neighbor $k\in
Q_1$ and a neighbor $k'\in Q_2$.  Note that $x$ has no neighbor
$k''\in Q_3$, for otherwise $\{x, k, k', k''\}$ induces a claw.

Suppose that $x$ has a non-neighbor $a\in A_1$.  Then $x$ is adjacent
to every vertex $u\in A_2$, for otherwise $\{x, k, u, a, k'\}$ induces
a $C_5$; and then to every vertex $v\in B_2$, for otherwise either
$\{a_2, a, x, v\}$ induces a claw (if $av\notin E(G)$) or $\{x, k, v,
a, k'\}$ induces a $C_5$ (if $av\in E(G)$); and then to every vertex
$w\in A_3\cup B_3\cup Q_1$, for otherwise $\{b_2,x,w,q_3\}$ induces a
claw.  Then $a$ is adjacent to every vertex $b\in B_2$, for otherwise
$\{x,k',a,q_3,b\}$ induces a $C_5$; and by the same argument the set
$A_1\setminus N(x)$ is complete to $B_2$.  It follows that $a_1\in
N(x)$ since $a_1$ is not complete to $B_2$.  Then $x$ is adjacent to
every vertex $q\in Q_2$, for otherwise $\{a_1, x, q_3, q\}$ induces a
claw.  But now we observe that $V(H)\cup\{x\}$ induces a larger
peculiar subgraph of $G$, because $x$ can be added to $A_3$ and the
vertices of $A_1\setminus N(x)$ can be moved to $B_1$.

Therefore we may assume that $x$ is complete to $A_1$, and, similarly,
to $B_2$.  Then $x$ is adjacent to every vertex $u$ in $Q_2\cup B_3$,
for otherwise $\{a_1, x, u, q_3\}$ induces a claw, and similarly $x$
is complete to $Q_1\cup A_3$.  It cannot be that $x$ has both a
non-neighbor $a'\in A_2$ and a non-neighbor $b'\in B_1$, for otherwise
$\{x,k,a',b',k'\}$ induces a $C_5$.  So, up to symmetry, $x$ is
complete to $A_2$.  But now $V(H)\cup\{x\}$ induces a larger peculiar
subgraph of $G$, because $x$ can be added to $A_3$.  This completes
the proof of the lemma.
\end{proof}

We observe that (up to isomorphism) there is a unique peculiar graph
$G$ with $\omega(G)=4$.  Indeed if $G$ is such a graph, with the same
notation as in the definition of a peculiar graph, then for each $i$
the set $Q_i\cup A_{i+1} \cup B_{i+1}\cup A_{i+2}$ is a clique, so,
since $G$ has no clique of size~$5$, the four sets $Q_i, A_{i+1},
B_{i+1}, A_{i+2}$ have size~$1$; and so the nine sets $A_i, B_i, Q_i$
($i=1,2,3$) all have size~$1$.  Hence $G$ is the unique peculiar graph
on nine vertices.

\begin{lemma}\label{lem:colpec4}
Let $G$ be a peculiar graph with $\omega(G)=4$.  Then $G$ is
$4$-choosable.
\end{lemma}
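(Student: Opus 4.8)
The plan is to exploit the very rigid structure of $G$. By the observation preceding the lemma, $G$ is the unique peculiar graph on nine vertices, all nine classes $A_i,B_i,Q_i$ being singletons; write $a_i,b_i,q_i$ for their unique members. Unwinding the adjacency conditions in the definition of a peculiar graph, one checks that $O:=G\setminus\{q_1,q_2,q_3\}$ induces a $K_{2,2,2}$ whose three maximal stable sets are $P_1=\{a_1,b_2\}$, $P_2=\{a_2,b_3\}$, $P_3=\{a_3,b_1\}$; that $q_1,q_2,q_3$ are pairwise non-adjacent; and that, up to the natural $\mathbb{Z}_3$-symmetry $i\mapsto i+1$, the vertex $q_1$ is complete to $P_2$ and has exactly one neighbour in each of $P_1$ (namely $b_2$) and $P_3$ (namely $a_3$). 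Fix a list assignment $L$ with $|L(v)|=4$ for all $v\in V(G)$; the task is to produce an $L$-colouring of $G$.

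The first step is a reduction. Since $O$ induces a $K_{2,2,2}$, every $L$-colouring of $G$ restricts to a proper $L$-colouring of $O$; conversely, if $\phi$ is a proper $L$-colouring of $O$ with $\phi\bigl(N_G(q_i)\bigr)\neq L(q_i)$ for $i=1,2,3$, then $\phi$ extends to an $L$-colouring of $G$ by giving each $q_i$ a colour of $L(q_i)$ that is absent from $N_G(q_i)$ (note $N_G(q_i)\subseteq V(O)$). Because the three parts of $O$ must receive pairwise disjoint colour sets, a proper $L$-colouring of $O$ uses exactly three colours if and only if each $P_j$ is monochromatic, and the latter is possible if and only if the three \emph{intersection lists} $I_1=L(a_1)\cap L(b_2)$, $I_2=L(a_2)\cap L(b_3)$, $I_3=L(a_3)\cap L(b_1)$ admit a system of distinct representatives (colour $P_j$ with its representative). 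When $O$ uses only three colours we have $|\phi(N_G(q_i))|\le 3<|L(q_i)|$, so the hypothesis of the reduction holds automatically. Hence, by Hall's theorem, the proof is complete unless the family $\{I_1,I_2,I_3\}$ is deficient.

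The heart of the proof is therefore the analysis of the deficient cases, which by Hall's theorem split according to whether some $I_j$ is empty, two of them are equal singletons, or the union $I_1\cup I_2\cup I_3$ has at most two colours (all three nonempty). In each case I would colour the deficient part(s) of $O$ with two colours and the remaining part(s) monochromatically. Making a part $P_j$ monochromatic keeps the unique $q$ complete to $P_j$ safe; for a two-coloured part $P_j=\{x,y\}$, if $q$ is complete to $P_j$ then $\phi(N_G(q))$ is a set of size $4$ of the form $\{\phi(x),\phi(y)\}$ together with two further colours, and one only has to steer it away from the single forbidden value $L(q)$. The mechanism that makes this work is that a deficient intersection is precisely a situation in which the individual lists are spread out: $I_j=\emptyset$ forces $L(x)$ and $L(y)$ to be disjoint, so together they offer eight colours, and $I_j\cup I_k$ being a singleton forces the remaining three colours of each of the four lists involved to be pairwise disjoint. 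This slack is enough both to keep the colouring of $O$ proper and to avoid the at most three forbidden sets $L(q_1),L(q_2),L(q_3)$.

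I expect the main obstacle to be the most degenerate configurations, where $O$ is forced to use four colours \emph{and} several parts are constrained simultaneously --- for instance $I_1=\emptyset$ together with $I_2=I_3=\{t\}$ (so $t$ lies in each of $L(a_2),L(b_3),L(a_3),L(b_1)$ while $L(a_1)\cap L(b_2)=\emptyset$), or $I_1=I_2=I_3=\{t\}$. There the two colours on each two-coloured part must be selected carefully, and the argument hinges on the remark that a colour which the partial colouring forbids on one octahedron vertex necessarily survives in some list elsewhere, so there is always an admissible choice; a short further case check disposes of these. (For context one may note that $|V(G)|=9=2\chi(G)+1$, so Theorem~\ref{theorem:claw-free-perfect} restricted to this graph is also an instance of Ohba's conjecture; the self-contained argument above is the one that fits the decomposition-based proof.)
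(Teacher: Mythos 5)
Your structural setup is correct: $G\setminus\{q_1,q_2,q_3\}$ is indeed the octahedron with parts $P_1=\{a_1,b_2\}$, $P_2=\{a_2,b_3\}$, $P_3=\{a_3,b_1\}$, each $q_i$ is complete to one part and sees exactly one vertex of each other part, and the reduction ``extend any proper $L$-colouring $\phi$ of $O$ with $\phi(N_G(q_i))\neq L(q_i)$ for all $i$'' is sound. The case where $\{I_1,I_2,I_3\}$ has a system of distinct representatives is also correctly dispatched. But the remainder of the argument --- which is where all the content of the lemma lives --- is only announced, not carried out. The phrases ``I would colour the deficient part(s) with two colours'', ``this slack is enough'', and ``a short further case check disposes of these'' stand in for the actual proof. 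The hardest configuration is $I_1=I_2=I_3=\emptyset$: then every part must be bichromatic, a proper colouring of $O$ is exactly a system of distinct representatives of the six lists $L(a_i),L(b_i)$, all six colours are distinct, so \emph{all three} sets $\phi(N_G(q_i))$ have size exactly $4$ and you must steer a single SDR away from three prescribed $4$-sets simultaneously. This is not a local adjustment: if $\phi(N_G(q_3))=L(q_3)$, recolouring (say) $a_2$ requires a colour of $L(a_2)$ outside $\{\phi(a_1),\phi(b_2),\phi(a_3),\phi(b_1)\}$, which need not exist, so the ``a forbidden colour necessarily survives elsewhere'' remark does not by itself produce an admissible choice. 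The mixed degenerate cases ($I_1=\emptyset$ with $I_2=I_3=\{t\}$, etc.) need similar explicit verification. As written, the proposal is a plausible plan with a genuine gap at its core.

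For comparison, the paper avoids this entire case analysis by three successive reductions: if some $L(a_i)\cap L(b_{i+1})\neq\emptyset$, it identifies those two vertices, observes that the resulting $7$-vertex graph is an elementary graph with clique number $3$, and invokes the known $3$-choosability of such graphs from \cite{Gravier2004211}; if some two $q$-lists intersect, it identifies those two $q$'s and applies Hall's theorem to the remaining seven lists; otherwise all the relevant pairwise intersections are empty and Hall's theorem gives a system of distinct representatives of all nine lists at once. Note in particular that the paper exploits intersections among the lists $L(q_1),L(q_2),L(q_3)$, which your framework never examines; if you want to complete your self-contained octahedron argument, you will likely need to bring the $q$-lists into the case analysis in a similar way rather than treating the three avoidance constraints as an afterthought.
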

\begin{proof}
Let $(A_1, B_1, A_2, B_2, A_3, B_3, Q_1, Q_2, Q_3)$ be a peculiar
partition of $G$.  As observed above, we have $|A_i|=|B_i|=|Q_i|=1$
for all $i=1,2,3$.  Hence let $A_i=\{a_i\}$, $B_i=\{b_i\}$ and
$Q_i=\{q_i\}$, for all $i=1,2,3$.  Recall that $a_i$ is not adjacent
to $b_{i+1}$, for each $i$.  Let $Q=\{q_1,q_2,q_3\}$.

Let $L$ be a list assignment that satisfies $|L(v)|= 4$ for all $v\in
V(G)$.  Let us prove that $G$ is $L$-colorable.

First suppose that for some $i\in\{1,2,3\}$ we have $L(a_i) \cap
L(b_{i+1}) \neq \emptyset$, say for $i=1$.  Pick any $c\in L(a_1) \cap
L(b_{2})$.  Let $G'=G\setminus\{a_1,b_{2}\}$ and let
$L'(x)=L(x)\setminus\{c\}$ for all $x\in V(G')$.  Clearly, $G'$ is a
claw-free perfect graph and $\omega(G') = 3$.  Moreover, $G'$ is
elementary.  To see this, define an egde coloring of $G'$ by coloring
blue the edges in $\{q_3b_1, q_3a_2, b_1a_2, b_3a_3, q_2a_3, b_3q_1\}$
and red the edges in $\{q_2b_1, q_2b_3, b_3b_1, q_1a_2, q_1a_3,
a_2a_3\}$; it is a routine matter to check that this edge coloring is
an elementary coloring.  By \cite{Gravier2004211}, $G'$ is
$3$-choosable, so it admits an $L'$-coloring.  We can extend this
coloring to $a_1$ and $b_{2}$ by assigning color~$c$ to them.
Therefore we may assume that:
\begin{equation}\label{pec41}
L(a_i) \cap L(b_{i+1})=\emptyset \mbox{ for all } i=1,2,3.
\end{equation}

Now suppose that there are vertices $u,v\in Q$ such that $L(u) \cap
L(v) \neq \emptyset$.  Let $w$ be the unique vertex in $Q\setminus
\{u,v\}$.  Pick any $c\in L(u) \cap L(v)$.  Let
$G'=G\setminus\{u,v\}$.  Let $L'(x)=L(x)\setminus\{c\}$ for all $x\in
V(G')\setminus\{w\}$, and let $L'(w)=L(w)$.  We claim that the family
$\{L'(x)\mid x\in V(G')\}$ admits a system of distinct
representatives.  Suppose the contrary.  By Hall's theorem, there is a
set $S\subseteq V(G')$ such that $|L'(S)|<|S|$.  Since $|L'(x)|\ge 3$
for all $x\in V(G')$, we have $|L'(S)|\ge 3$, so $|S|\ge 4$; this
implies that either (a) $S\supseteq \{a_i, b_{i+1}\}$ for some
$i\in\{1,2,3\}$ or (b) $S$ contains $w$.  In case~(a), (\ref{pec41})
implies that $c$ belongs to at most one of $L(a_i)$ and $L(b_{i+1})$,
and so $|L'(S)|\ge |L'(a_i)\cup L'(b_{i+1})|\ge 7$, so $|S|\ge 8$,
which is impossible because $|V(G')|=7$.  In case~(b), since
$|L'(w)|=4$, we have $|L'(S)|\ge 4$, so $|S|\ge 5$, which implies that
$S$ satisfies (a) again, a contradiction.  Thus the family
$\{L'(x)\mid x\in V(G')\}$ admits a system of distinct
representatives, which is an $L'$-coloring of $G'$.  We can extend
this coloring to $u$ and $v$ by assigning color~$c$ to them.
Therefore we may assume that
\begin{equation}\label{pec42}
L(u) \cap L(v)=\emptyset \mbox{ for all } u,v\in Q.
\end{equation}

We claim that the family $\{L(x)\mid x\in V(G)\}$ admits a system of
distinct representatives.  Suppose the contrary.  By Hall's theorem,
there is a set $T\subseteq V(G)$ such that $|L(T)|<|T|$.  Since
$|L(x)|=4$ for all $x\in V(G)$, we have $|L(T)|\ge 4$, so $|T|\ge 5$;
this implies that either (a) $T\supseteq \{a_i, b_{i+1}\}$ for some
$i\in\{1,2,3\}$ or (b) $T$ contains two vertices from $Q$.  In either
case, (\ref{pec41}) or (\ref{pec42}) implies that $|L(T)|\ge 8$, so
$|T|\ge 9$, that is, $T=V(G)$.  But then $T\supset Q$, so
(\ref{pec42}) implies that $|L(T)|\ge 12$ and $|T|\ge 13$, which is
impossible.  Thus the family $\{L(x)\mid x\in V(G)\}$ admits a system
of distinct representatives, which is an $L$-coloring of $G$.
\end{proof}

\section{Cobipartite graphs}

In this section we analyze the list-colorability of certain
cobipartite graphs with certain list assignments.

\begin{lemma}\label{lemma:x-y-4}
Let $H$ be a cobipartite graph, where $V(H)$ is partitioned into two
cliques $X$ and $Y$.  Assume that $|X| \leq |Y|$ and that there are
$|X|$ non-edges between $X$ and $Y$ and they form a matching in
$\overline{H}$.  Let $L$ be a list assignment on $V(H)$ such that
$|L(x)| \ge |X|$ for all $x \in X$ and $|L(y)| \ge |Y|$ for all $y \in
Y$.  Then $H$ is $L$-colorable.
\end{lemma}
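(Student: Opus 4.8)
The plan is to reduce to an application of Hall's theorem by building a bipartite auxiliary structure in which a system of distinct representatives yields the desired $L$-coloring. Write $X=\{x_1,\ldots,x_p\}$ and $Y=\{y_1,\ldots,y_q\}$ with $p=|X|\le q=|Y|$, and suppose the non-edges of $\overline{H}$ between $X$ and $Y$ form a perfect matching on $X$, say $x_iy_i\notin E(H)$ for $i=1,\ldots,p$ (and $x_i$ complete to $Y\setminus\{y_i\}$). Since $X$ and $Y$ are cliques, any $L$-coloring must use distinct colors on $X$ and distinct colors on $Y$; the only pairs allowed to share a color are the matched pairs $(x_i,y_i)$. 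So an $L$-coloring is exactly a choice of distinct representatives $f(x_i)\in L(x_i)$ for $X$, distinct representatives $f(y_j)\in L(y_j)$ for $Y$, subject to: $f(x_i)\neq f(y_j)$ whenever $i\neq j$ (equivalently, the only collisions between an $X$-color and a $Y$-color occur on a matched pair).

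The main idea is to first find an SDR $\{f(y_1),\ldots,f(y_q)\}$ for the family $\{L(y)\mid y\in Y\}$, which exists because $|L(y)|\ge q$ for all $y\in Y$ and hence the union of any $\ell$ of these lists already has size $\ge q\ge\ell$ (Hall). Having fixed colors on $Y$, I then need to color $X$: for each $x_i$ I must pick $f(x_i)\in L(x_i)$, all distinct, avoiding the forbidden set $F_i:=f(Y)\setminus\{f(y_i)\}$ of size $q-1$. Equivalently, set $L^*(x_i):=L(x_i)\setminus F_i$; I want an SDR for $\{L^*(x_i)\}_{i=1}^{p}$. The trouble is that $|L^*(x_i)|$ could be as small as $|L(x_i)|-(q-1)\ge p-(q-1)$, which is $\le 0$ when $q\ge p$; so a naive Hall bound fails and one cannot fix $Y$'s coloring arbitrarily.

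To get around this, I would choose the coloring of $Y$ more cleverly, or dually, treat the two cliques symmetrically via a single Hall argument. The cleanest route: build one bipartite graph whose left side is $X\cup Y$ and whose right side is the color set $L(X)\cup L(Y)$, but with the matched pairs $(x_i,y_i)$ \emph{identified} into a single super-vertex $z_i$ with list $L(x_i)\cup L(y_i)$ when we are willing to give them the same color, and kept separate otherwise — and then argue by a counting/deficiency version of Hall (König's theorem) that a suitable partial identification always admits a system of distinct representatives. Concretely, I expect to show: for any $S\subseteq X\cup Y$, the ``defect'' $|S|-|L(S)|$ is controlled using $|L(x)|\ge p$, $|L(y)|\ge q$, and the matching structure, so that after collapsing the matched pairs that need collapsing one still satisfies Hall's condition. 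The subtle point, and the step I expect to be the main obstacle, is handling sets $S$ that contain many $x_i$'s but few of the corresponding $y_i$'s (or vice versa): there $|L(S)|$ is bounded only by $|L(x)|\ge p$ from the $X$-side, and one must use that each such $x_i$ is \emph{complete} to almost all of $Y$ to force any $y_j\in S$ ($j\neq i$) to contribute a genuinely new color, pushing $|L(S)|$ back up to at least $|S|$. I would organize the case analysis on whether $S\cap X$ is a single matched pair's worth, all of $X$, or an intermediate set, and in the intermediate case invoke König on the bipartite non-edge graph to count how many ``free'' identifications are available. Once Hall's condition is verified in every case, the resulting SDR is, by the discussion in the first paragraph, exactly an $L$-coloring of $H$.
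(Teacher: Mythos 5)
Your setup is right: since $X$ and $Y$ are cliques, an $L$-coloring is a choice of distinct colors on $X$ and distinct colors on $Y$ in which the only permitted collisions are on the matched non-adjacent pairs $(x_i,y_i)$, and you correctly observe that the naive two-stage approach (color $Y$ first, then $X$) fails. But the fix you propose has a genuine gap: the entire content of the lemma is concentrated in the sentence ``argue \ldots that a suitable partial identification always admits a system of distinct representatives,'' and this is never carried out. You do not specify \emph{which} matched pairs to identify, nor do you verify Hall's condition for the resulting family; you explicitly flag the problematic sets $S$ (many $x_i$'s, few corresponding $y_i$'s) as ``the main obstacle'' and then only gesture at a case analysis. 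As written, the proposal is a plan of attack, not a proof. There is also a concrete error in the construction: if $x_i$ and $y_i$ are identified into a super-vertex that receives a single color, its list must be $L(x_i)\cap L(y_i)$, not $L(x_i)\cup L(y_i)$ --- a representative chosen from the union could lie outside one of the two lists.

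For comparison, the paper closes exactly this gap by avoiding any global choice of which pairs to identify. It inducts on $|X|$: if the plain family $\{L(v)\mid v\in V(H)\}$ already has a system of distinct representatives, we are done; otherwise Hall gives a deficient set $T$ with $|L(T)|<|T|$, and a short counting argument shows $|T|>|L(y)|\ge|Y|=\omega(H)$, so $T$ is not a clique and hence contains a non-adjacent pair $x\in X$, $y\in Y$ with $|L(x)\cup L(y)|\le|L(T)|<|T|\le|X|+|Y|\le|L(x)|+|L(y)|$, forcing $L(x)\cap L(y)\neq\emptyset$. One then colors that single pair with a common color, deletes it, removes the color from all other lists, and applies the induction hypothesis. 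In other words, the deficiency itself locates the next pair to identify, one at a time. If you want to salvage your global approach you would need to prove the existence of the right set of identifications directly, which is essentially a deficiency form of Hall's theorem on the collapsed family with intersection lists; the inductive argument is substantially shorter.
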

\begin{proof}
Let $X=\{x_1, \ldots, x_p\}$, and let $y_1,\ldots,y_p$ be vertices of
$Y$ such that $\{x_1,y_1\}$, \ldots, $\{x_p,y_p\}$ are the non-edges
of $H$.  The hypothesis implies that $y_1, \ldots, y_p$ are pairwise
distinct.  Since a clique in $H$ can contain at most one of $x_i,y_i$
for each $i=1,\ldots,p$, we have $\omega(H)=|Y|$.

We proceed by induction on $|X|$.  If $|X| = 0$, then $H$ is a clique
with $|L(v)| = |V(H)|$ for all $v \in V(H)$; so $H$ is $L$-colorable
by Hall's theorem.  Now suppose that $|X| > 0$.  If the family $\{L(v)
\mid v \in V(H)\}$ admits a system of distinct representatives, then
this is an $L$-coloring.  So suppose the contrary.  By Hall's theorem
there is a set $T \subseteq V(H)$ such that $|L(T)| < |T|$.  Then
$|T|> |X|$, so $T$ contains a vertex $y$ from $Y$, and so $|T|>
|L(y)|\ge |Y|$.  Since $\omega(H)=|Y|$, it follows that $T$ is not a
clique.  So $T$ contains non-adjacent vertices $x, y$ with $x \in X$
and $y \in Y$.  We have $|L(x) \cup L(y)| \leq |L(T)| < |T| \leq |X| +
|Y|$, which implies $L(x) \cap L(y) \neq \emptyset$.  Pick a color
$c\in L(x)\cap L(y)$.  Set $L'(w) = L(w) \setminus \{c\}$ for all $w
\in V(H) \setminus \{x, y\}$.  Let $X'=X\setminus\{x\}$,
$Y'=Y\setminus\{y\}$, and $H'= H\setminus\{x,y\}$.  Clearly every
vertex $x'\in X'$ satisfies $|L'(x')|\ge |X'|$ and every vertex $y'\in
Y'$ satisfies $|L'(y')|\ge |Y'|$, and $|X'|\le |Y'|$, and there are
$|X'|$ non-edges between $X'$ and $Y'$, and they form a matching in
$\overline{H'}$.  By the induction hypothesis, $H'$ admits an
$L'$-coloring.  We can extend it to an $L$-coloring of $H$ by
assigning the color $c$ to $x$ and $y$.
\end{proof}

\begin{lemma}\label{lemma:x-2-y-2}
Let $H$ be a cobipartite graph, where $V(H)$ is partitioned into two
cliques $X = \{x_1, x_2\}$ and $Y = \{y_1, y_2\}$, and
$E(\overline{H})= \{x_2y_2\}$.  Let $L$ be a list assignment on $V(H)$
such that $|L(u)| \ge 2$ for all $u \in V(H)$.  Then $H$ is
$L$-colorable if and only if every clique $Q$ of $H$ satisfies $|L(Q)|
\geq |Q|$.
\end{lemma}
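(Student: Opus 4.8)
The plan is to prove both directions of the equivalence; the forward direction is immediate, so the work is in the converse. For the forward direction, if $H$ is $L$-colorable then any $L$-coloring restricted to a clique $Q$ uses $|Q|$ distinct colors, each drawn from the list of the corresponding vertex, so $L(Q)$ must have size at least $|Q|$. For the converse, I assume that every clique $Q$ of $H$ satisfies $|L(Q)|\ge|Q|$ and I want to produce an $L$-coloring. Note that $H$ has exactly one non-edge, $x_2y_2$, so a proper coloring of $H$ is exactly an assignment of colors where $x_1,x_2,y_1$ are pairwise distinct and $x_1,x_2,y_2$ are pairwise distinct (the pair $x_2,y_2$ being the only one allowed to coincide). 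The relevant cliques whose conditions I will use are $\{x_1,x_2\}$, $\{y_1,y_2\}$, $\{x_1,y_1\}$, $\{x_2,y_1\}$, $\{x_1,y_2\}$, and the two triangles $\{x_1,x_2,y_1\}$ and $\{x_1,y_1,y_2\}$.

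The main idea is a short case analysis on whether $x_2$ and $y_2$ can be given a common color. If $L(x_2)\cap L(y_2)\ne\emptyset$, pick $c$ in this intersection and tentatively set $c(x_2)=c(y_2)=c$. It then remains to color $x_1$ and $y_1$, which together with the already-colored vertices must avoid $c$ on $x_1,y_1$ (since $x_1$ is adjacent to both $x_2,y_2$ and $y_1$ is adjacent to both $x_2,y_2$), and must be distinct from each other. So I need a system of distinct representatives for $L(x_1)\setminus\{c\}$ and $L(y_1)\setminus\{c\}$; by Hall this fails only if these two sets are equal and of size $1$, i.e. $L(x_1)=L(y_1)=\{c,d\}$ for a single $d$, in which case I instead recolor: use $d$ on one of $x_1,y_1$ and... — more cleanly, in that subcase the triangle $\{x_1,x_2,y_1\}$ has list-union $\{c,d\}\cup L(x_2)$, which must have size at least $3$, so $L(x_2)$ contains a color $c'\notin\{c,d\}$; put $c(x_2)=c'$, $c(x_1)=c$, $c(y_1)=d$, and then $y_2$ needs a color in $L(y_2)$ different from $c(x_1)=c$ and distinct-as-needed; since $|L(y_2)|\ge 2$ and only $c$ is forbidden, this works. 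The remaining case is $L(x_2)\cap L(y_2)=\emptyset$. Then $x_2$ and $y_2$ necessarily get different colors, so $H$ behaves like the complete graph $K_4$ for coloring purposes, and I must find a system of distinct representatives for all four lists $L(x_1),L(x_2),L(y_1),L(y_2)$. By Hall's theorem this fails only if some subfamily of $\ell$ lists has union of size $<\ell$; since all lists have size $\ge 2$, the only possible failure is a set of three vertices whose list-union has size $2$, or all four vertices with union of size $\le 3$. Each such set of three vertices, other than $\{x_1,x_2,y_2\}$ if that were not a clique — but actually $\{x_1,x_2,y_1\}$ and $\{x_1,y_1,y_2\}$ are cliques, and $\{x_1,x_2,y_2\}$ and $\{x_2,y_1,y_2\}$ are not — needs separate handling, but in the non-clique triples the non-edge is $x_2y_2$ and $L(x_2)\cap L(y_2)=\emptyset$ forces the union to have size $\ge 4$, so those triples cannot be the obstruction; the clique triples are ruled out directly by hypothesis; and the full set of four has union containing the disjoint sets $L(x_2),L(y_2)$ plus at least one more element from $L(x_1)$ (since $|L(x_1)|\ge 2$ and $L(x_1)$ cannot be contained in $L(x_2)$ alone — if it were, the clique $\{x_1,x_2\}$ would give $|L(x_2)|\ge 2$ only, not a contradiction; so I refine: $L(\{x_1,x_2\})\ge 2$ and $L(\{y_1,y_2\})\ge 2$ with $L(x_2),L(y_2)$ disjoint already forces $|L(H)|\ge 4$ unless $L(x_1)\subseteq L(x_2)$ and $L(y_1)\subseteq L(y_2)$, and then the clique $\{x_1,y_1\}$ has $L(x_1)\cup L(y_1)$ of size $\ge 2$ consistent, but I can just apply Hall to $\{x_1,y_1\}$ together with the disjointness to still get the SDR).

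The main obstacle I anticipate is organizing the Hall-failure subcases cleanly so that each potential obstructing set $T$ is ruled out either by one of the clique hypotheses or by the disjointness $L(x_2)\cap L(y_2)=\emptyset$; the geometry is small enough that this is bookkeeping rather than genuine difficulty, but one must be careful that the triples $\{x_1,x_2,y_2\}$ and $\{x_2,y_1,y_2\}$ (which are \emph{not} cliques, hence not covered by hypothesis) are instead excluded by the disjointness assumption active in that case. I would present the argument as: first dispatch the easy direction; then split on $L(x_2)\cap L(y_2)$; in the intersecting case color $x_2,y_2$ together and finish by Hall on $\{x_1,y_1\}$ with the one degenerate subcase handled via a triangle hypothesis; in the disjoint case invoke Hall for an SDR on all four vertices, checking that every subfamily that could violate Hall's condition is excluded. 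This keeps the proof to under a page.
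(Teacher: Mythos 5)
Your overall strategy is sound but genuinely different from the paper's. The paper does not argue from scratch: it quotes a characterization from an earlier reference (the graph is not $L$-colorable if and only if $L(x_1)=L(y_1)=L(v)$ for some $v\in\{x_2,y_2\}$ with all three lists of size two) and then merely checks that the clique condition on the two triangles $\{x_1,x_2,y_1\}$ and $\{x_1,y_1,y_2\}$ excludes that configuration. You instead re-derive everything directly by splitting on whether $L(x_2)\cap L(y_2)$ is empty and applying Hall's theorem; this is self-contained (it effectively re-proves the quoted claim) at the cost of more case analysis. Your disjoint case is correct once tidied: the two clique triples are handled by hypothesis, and the two non-clique triples as well as the full quadruple contain both $L(x_2)$ and $L(y_2)$, whose disjoint union already has size at least $4$, so no Hall violator exists.

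There is one local error in the intersecting case. In the degenerate subcase $L(x_1)=L(y_1)=\{c,d\}$ you set $x_1=c$, $y_1=d$, $x_2=c'$ and then claim $y_2$ only needs to avoid $c$. In fact $y_2$ is adjacent to both $x_1$ and $y_1$, so it must avoid both $c$ and $d$, and $|L(y_2)|\ge 2$ alone does not guarantee a legal color (e.g.\ $L(y_2)=\{c,d\}$ would block you). The repair is the exact mirror of what you already did for $x_2$: the clique hypothesis on the triangle $\{x_1,y_1,y_2\}$ gives $|\{c,d\}\cup L(y_2)|\ge 3$, hence some $d'\in L(y_2)\setminus\{c,d\}$, and the coloring $x_1=c$, $y_1=d$, $x_2=c'$, $y_2=d'$ is proper. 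With that one-line correction your argument is complete.
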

\begin{proof}
This is a corollary of Claim~1
in~\cite{Gravier:1997:CNE:249460.249532}.  For completeness, we
restate the claim here: {\it The graph $H$ is not $L$-colorable if and
only if for some $v\in\{x_2,y_2\}$ we have $L(x_1) = L(y_1) = L(v)$
and these three lists are of size two.}
	
Clearly, if $H$ is $L$-colorable, then every clique $Q$ of $H$
satisfies $|L(Q)| \geq |Q|$.  Conversely, if every clique $Q$ of $H$
satisfies $|L(Q)| \geq |Q|$, then by the above claim, applied to the
cliques $\{x_1,y_1,x_2\}$ and $\{x_1,y_1,y_2\}$, we obtain that $H$ is
$L$-colorable.
\end{proof}

\begin{lemma}\label{lemma:x-3-y-2}
Let $H$ be a cobipartite graph, where $V(H)$ is partitioned into two
cliques $X = \{x_1, x_2, x_3\}$ and $Y = \{y_1, y_2\}$, and
$E(\overline{H})= \{x_3y_2\}$.  Let $L$ be a list assignment on $V(H)$
such that $|L(x)| \ge 3$ for all $x \in X$ and $|L(y)| \ge 2$ for all
$y \in Y$.  Then $H$ is $L$-colorable if and only if every clique $Q$
of $H$ satisfies $|L(Q)|\ge |Q|$.
\end{lemma}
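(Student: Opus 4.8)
The plan is to mimic the structure of the proof of Lemma~\ref{lemma:x-y-4}: the ``only if'' direction is immediate, since any clique $Q$ of $H$ with an $L$-coloring yields $|Q|$ distinct colors drawn from $L(Q)$. For the ``if'' direction, suppose every clique $Q$ satisfies $|L(Q)|\ge|Q|$ but $H$ has no $L$-coloring. The cliques of $H$ of interest are the two maximal ones, $K_1=\{x_1,x_2,x_3,y_1\}$ and $K_2=\{x_1,x_2,y_1,y_2\}$ (the only non-edge is $x_3y_2$), so the hypothesis gives $|L(K_1)|\ge4$ and $|L(K_2)|\ge4$. First I would try to directly exhibit a system of distinct representatives for the family $\{L(v)\mid v\in V(H)\}$, and invoke Hall's theorem: if it fails, there is $T\subseteq V(H)$ with $|L(T)|<|T|$. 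Since $|L(v)|\ge2$ for all $v$ we get $|T|\ge3$; and since $|L(x_i)|\ge3$ for $x_i\in X$, if $T$ meets $X$ in a single vertex we already get $|L(T)|\ge3$ forcing $|T|\ge4$. The key point is that a ``bad set'' $T$ cannot be a clique (the clique hypothesis forbids $|L(Q)|<|Q|$), so $T$ must contain the unique non-adjacent pair $x_3,y_2$; then $|T|\le|V(H)|=5$.

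The main case analysis is therefore over $|T|\in\{3,4,5\}$ with $\{x_3,y_2\}\subseteq T$. If $|T|=5$, i.e. $T=V(H)$, then $|L(T)|<5$; but $L(T)\supseteq L(K_1)$ gives $|L(T)|\ge4$, so $|L(T)|=4$, and moreover $L(y_2)\subseteq L(K_1)$ and $L(x_3)\subseteq L(K_2)$, forcing all five lists to live in a common $4$-set $C$. Then I would pick a color $c\in L(x_3)\cap L(y_2)$ — such $c$ exists because $|L(x_3)|+|L(y_2)|\ge5>4=|C|$ — assign $c$ to both $x_3$ and $y_2$, delete them, set $L'(v)=L(v)\setminus\{c\}$ on the remaining triangle $\{x_1,x_2,y_1\}$, and check $|L'(v)|\ge2$ with $|L'(\{x_1,x_2,y_1\})|\ge3$, so Hall's theorem colors the triangle. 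For $|T|=3$ or $4$ containing $\{x_3,y_2\}$: since $T$ is not a clique only because of the pair $x_3,y_2$, I would again aim to show $L(x_3)\cap L(y_2)\ne\emptyset$ from $|L(x_3)|+|L(y_2)|>|L(T)|\ge|L(x_3)\cup L(y_2)|$, merge $x_3$ and $y_2$ onto a common color, delete them, and reduce to list-coloring the clique $\{x_1,x_2,y_1\}$ (or $\{x_1,x_2\}$, $\{x_1\}$) with lists of size at least $|T|-2\ge1$ shrunk by one color but still large enough by the clique hypothesis applied in $H$.

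The step I expect to be the real obstacle is verifying, after deleting $x_3$ and $y_2$ and removing the color $c$, that the residual lists on $\{x_1,x_2,y_1\}$ still satisfy the Hall condition — i.e. that no new deficiency is created. Concretely one must rule out $L(x_1)\setminus\{c\}=L(x_2)\setminus\{c\}$ being too small, or $L(x_1)\setminus\{c\}\cup L(x_2)\setminus\{c\}$ having size $<2$. This is where the size gap $|L(x_i)|\ge3$ for $i=1,2$ is essential: even after deleting one color, $|L'(x_i)|\ge2$, and if a deficient pair survived we could trace it back to a clique $Q\subseteq H$ with $|L(Q)|<|Q|$ (taking $Q$ to be that pair together with $x_3$ or $y_2$, using that $c\in L(x_3)$ and $c\in L(y_2)$), contradicting the hypothesis. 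An alternative, possibly cleaner, route is to skip the explicit SDR argument and instead reduce directly to Lemma~\ref{lemma:x-2-y-2}: color $x_1$ first with some color, or handle the vertex $x_1$ (which is complete to everything) by a careful greedy choice, leaving the sub-cobipartite graph on $\{x_2,x_3,y_1,y_2\}$ whose only non-edge is $x_3y_2$, and apply Lemma~\ref{lemma:x-2-y-2} after checking its clique condition is inherited; I would try both and keep whichever needs less case-checking.
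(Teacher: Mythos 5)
Your overall strategy (Hall/SDR, then merge $x_3$ and $y_2$ onto a common color and recurse on the remaining clique) is the right family of ideas, but the step you yourself flag as the obstacle is a genuine gap, and the repair you sketch for it is false. You propose to pick an arbitrary $c\in L(x_3)\cap L(y_2)$ and to argue that any deficiency created on $\{x_1,x_2,y_1\}$ after deleting $c$ ``traces back'' to a clique $Q$ of $H$ with $|L(Q)|<|Q|$. It does not. Take $L(x_1)=L(x_2)=\{1,2,3\}$, $L(x_3)=\{1,2,4\}$, $L(y_1)=\{1,2\}$, $L(y_2)=\{1,4\}$. Every clique $Q$ satisfies $|L(Q)|\ge|Q|$ (both $4$-cliques have list-union $\{1,2,3,4\}$, and all smaller cliques are easily checked), and $H$ is $L$-colorable ($x_1\mapsto 3$, $x_2\mapsto 2$, $y_1\mapsto 1$, $y_2\mapsto 4$, $x_3\mapsto 4$). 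But choosing $c=1\in L(x_3)\cap L(y_2)$ leaves the triangle $\{x_1,x_2,y_1\}$ with residual lists $\{2,3\},\{2,3\},\{2\}$, whose union has size $2<3$, so the recursion fails; and adding $x_3$ or $y_2$ back to this triple gives a clique whose list-union is all of $\{1,2,3,4\}$, so no violated clique is produced. The point is that $|L(S\cup\{x_3\})|$ can exceed $|L(S)\setminus\{c\}|$ by more than one, because $L(x_3)$ may contain colors outside $L(S)\cup\{c\}$; your trace-back only controls the color $c$ itself. The same defect infects your $|T|=3,4$ cases, and the claimed bound $|L'(v)|\ge2$ on the triangle is already false for $y_1$, whose list may have size only $2$ to begin with.

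The fix requires choosing the merged color (or the direction of the merge) with care, which is what the paper does. It first disposes of the case $L(y_2)\subseteq L(x_3)$ by coloring the $4$-clique $H\setminus\{x_3\}$ via Hall (its Hall condition is exactly the clique hypothesis) and then copying $y_2$'s color onto $x_3$ --- this covers the example above. In the remaining case $|L(x_3)\cup L(y_2)|\ge4$, so a Hall violator must be all of $V(H)$ with $|L(V(H))|=4$, and the paper then makes explicit, non-arbitrary choices (a normalized color for $x_3,y_2$, then a color for $y_1$, trying both options when two exist) and shows that every failure exhibits a clique violating the hypothesis. Your alternative route via Lemma~\ref{lemma:x-2-y-2} also does not go through as stated, since after coloring $x_1$ the lists of $y_1,y_2$ may drop to size $1$, below that lemma's hypothesis. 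So the proposal needs the case split $L(y_2)\subseteq L(x_3)$ versus not, or an equivalently careful choice of $c$, before it becomes a proof.
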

\begin{proof}
If $H$ is $L$-colorable then clearly every clique $Q$ of $H$ satisfies
$|L(Q)|\ge |Q|$. Now let us prove the converse.

First suppose that $L(y_2)\subseteq L(x_3)$.  Since $H\setminus
\{x_3\}$ is a clique, every subset $T$ of $V(H)\setminus\{x_3\}$
satisfies $|L(T)|\ge |T|$, and so, by Hall's theorem there is an
$L$-coloring of $H\setminus \{x_3\}$.  Then we can extend any such
coloring by assigning to $x_3$ the color assigned to $y_2$.

Now assume that $L(y_2)\not\subseteq L(x_3)$.  This implies $|L(x_3)
\cup L(y_2)| \ge 4$.  Suppose that the family $\{L(x) \mid x \in
V(H)\}$ does not have a system of distinct representatives.  By Hall's
theorem there is a set $T\subseteq V(H)$ such that $|L(T)|< |T|$.  By
the assumption, $T$ is not a clique, so it contains $x_3$ and $y_2$.
It follows that $|L(T)|\ge 4$.  Hence $|T|=5$, so $T=V(H)$, and
$|L(T)|=4$, and we may assume that $L(x_3)=\{1,2,3\}$ and
$L(y_2)=\{3,4\}$ and $L(T)=\{1,2,3,4\}$.  Assign color~$3$ to $x_3$
and $y_2$.  Now assign a color $c$ from $L(y_1)\setminus\{3\}$ to
$y_1$ (there may be two choices for $c$).  We may assume that this
coloring fails to be extended to $\{x_1, x_2\}$; so it must be that
$L(x_1)\setminus\{3,c\}$ and $L(x_2)\setminus\{3,c\}$ are equal and of
size~$1$; so $L(x_1)= L(x_2)= \{b,c,3\}$ for some $b\neq c$, with
$b\in\{1,2,4\}$.  Suppose that $3\notin L(y_1)$.  Then there is a
second choice for $c$, and we may assume that this attempt fails
similarly.  Hence $L(y_1)=\{b,c\}$, with $b,c\in\{1,2,4\}$.  If
$\{b,c\}=\{1,2\}$, then the clique $Q_1=\{x_1,x_2,x_3,y_1\}$ violates
the assumption because $L(Q_1)= \{1,2,3\}$.  If $\{b,c\}=\{1,4\}$ or
$\{2,4\}$, then the clique $Q_2=\{x_1,x_2,y_1,y_2\}$ violates the
assumption because $L(Q_2)= \{b,c,3\}$.  So we may assume that $3\in
L(y_1)$, i.e., $L(y_1)= \{c,3\}$.  If $c=4$, then $Q_2$ violates the
assumption because $L(Q_2)=\{b,3,4\}$.  So, up to symmetry, $c=1$.  If
$b=2$, then $Q_1$ violates the assumption because $L(Q_1)=\{1,2,3\}$.
If $b=4$, then $Q_2$ violates the assumption because
$L(Q_2)=\{1,3,4\}$.  Hence the family $\{L(x) \mid x \in V(H)\}$
admits a system of distinct representatives, which is an $L$-coloring
of $G$.
\end{proof}

\begin{lemma}\label{lemma:x-3-y-3}
Let $H$ be a cobipartite graph, where $V(H)$ is partitioned into two
cliques $X = \{x_1, x_2, x_3\}$ and $Y = \{y_1, y_2, y_3\}$, and
$E(\overline{H}) =\{x_2y_2, x_3y_3\}$.  Let $L$ be a list assignment
on $V(H)$ such that $|L(x)| \ge 3$ for all $x \in V(H)$.  Then $H$ is
$L$-colorable if and only if every clique $Q$ of $H$ satisfies
$|L(Q)|\ge |Q|$.  In particular, if $|L(x_1) \cup L(y_1)|\ge 4$, then
$H$ is $L$-colorable.
\end{lemma}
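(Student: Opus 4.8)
The forward implication is routine: in any $L$-colouring the colours appearing on a clique $Q$ are pairwise distinct, so $|L(Q)|\ge|Q|$. The ``in particular'' clause will then fall out of the converse: a clique of $H$ contains at most one vertex of each of the non-adjacent pairs $\{x_2,y_2\}$ and $\{x_3,y_3\}$, whereas $x_1$ and $y_1$ are adjacent to everything, so every clique of $H$ of size $4$ contains both $x_1$ and $y_1$, while every clique of size at most $3$ automatically satisfies $|L(Q)|\ge 3\ge|Q|$ because all lists have size at least $3$; hence $|L(x_1)\cup L(y_1)|\ge 4$ already implies that every clique $Q$ satisfies $|L(Q)|\ge|Q|$.

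To prove the converse I would assume $|L(Q)|\ge|Q|$ for every clique $Q$ of $H$, which (by the observation just made) is the same as asking that each of the four $4$-cliques has a union of lists of size at least $4$, and split into two cases. In the first case, $L(x_2)\cap L(y_2)=\emptyset$ and $L(x_3)\cap L(y_3)=\emptyset$; here I would show that the family $\{L(v)\mid v\in V(H)\}$ admits a system of distinct representatives, which is an $L$-colouring, using Hall's theorem. A violating set $T$ would have $|L(T)|<|T|$ and hence $|T|\ge 4$ since all lists have size $\ge 3$; it could not contain both vertices of $\{x_2,y_2\}$ (that would force $|L(T)|\ge 6$, so $|T|\ge 7$) nor both vertices of $\{x_3,y_3\}$; since also $|T|\ge 4$, it follows that $|T|=4$ and $T$ consists of $x_1,y_1$ together with one vertex of each pair, i.e. a $4$-clique with $|L(T)|<4$, contradicting the hypothesis.

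In the second case, using the automorphisms of $H$ I may assume $L(x_2)\cap L(y_2)\ne\emptyset$. The idea is to pick $c\in L(x_2)\cap L(y_2)$, delete $x_2$ and $y_2$, colour the remaining graph $H\setminus\{x_2,y_2\}$ — which is cobipartite with cliques $\{x_1,x_3\}$, $\{y_1,y_3\}$ and sole non-edge $x_3y_3$ — from the lists $L(v)\setminus\{c\}$ (each of size $\ge 2$) by Lemma~\ref{lemma:x-2-y-2}, and then extend by giving colour $c$ to $x_2$ and $y_2$ (legal, since every neighbour of $x_2$ or $y_2$ now lies in $H\setminus\{x_2,y_2\}$ and has lost $c$). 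Lemma~\ref{lemma:x-2-y-2} requires Hall's condition on the cliques of that $4$-vertex graph; the only ones that are not automatic are the triangles $\{x_1,x_3,y_1\}$ and $\{x_1,y_1,y_3\}$, and removing $c$ spoils the condition on $\{x_1,x_3,y_1\}$ only when $L(x_1),L(x_3),L(y_1)$ are all equal to a common $3$-set containing $c$ (and similarly for the other triangle with $y_3$ in place of $x_3$). So the reduction succeeds unless every colour in $L(x_2)\cap L(y_2)$ lies in one of these (at most two) $3$-sets.

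The blocked subcase is where I expect the main difficulty. There $L(x_1)=L(y_1)$ is a $3$-set $W$, one of $L(x_3),L(y_3)$ equals $W$ — say $L(x_3)=W$ by symmetry — and $\emptyset\ne L(x_2)\cap L(y_2)\subseteq W$. I would then colour by hand: applying the hypothesis to the $4$-cliques $\{x_1,x_2,x_3,y_1\}$ and $\{x_1,x_3,y_1,y_2\}$ — three of whose lists are $W$ — gives colours $p\in L(x_2)\setminus W$ and $q\in L(y_2)\setminus W$; put $x_2\mapsto p$ and $y_2\mapsto q$. It remains to colour $x_1,y_1,x_3,y_3$ so that $x_3$ and $y_3$ each differ from $x_1$ and $y_1$ and moreover $y_3\notin\{p,q\}$ (note $x_3\not\sim y_3$). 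If $L(y_3)\cap W\ne\emptyset$, pick $r\in L(y_3)\cap W$, set $x_3\mapsto r$ and $y_3\mapsto r$, and give $x_1,y_1$ the two colours of $W\setminus\{r\}$; otherwise $L(y_3)\cap W=\emptyset$, so $L(y_3)$ contains some $r\notin\{p,q\}$, and I set $y_3\mapsto r$ while colouring $x_3,x_1,y_1$ with the three colours of $W$. In either case all adjacencies are respected. The crux of the argument is thus isolating this last subcase and recognising that the $4$-clique hypothesis is precisely what makes room for $x_2$ and $y_2$ outside $W$; everything else is the symmetry bookkeeping and routine verifications of Hall's condition for the small cliques.
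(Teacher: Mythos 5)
Your proof is correct, but it is organized quite differently from the paper's. The paper first normalizes to $|L(x_i)\cap L(y_i)|\le 1$ for $i=2,3$ by merging a pair with two common colors and invoking Lemma~\ref{lemma:x-3-y-2} on the resulting $3{+}2$ cobipartite graph (with a hand-made coloring for the one configuration where that lemma's clique condition fails); it then runs Hall's theorem, which in that normalized situation forces a violating set to be all of $V(H)$ with $|L(V(H))|=5$ and singleton intersections $L(x_i)\cap L(y_i)=\{c_i\}$, and finishes by a case split on $c_2\neq c_3$ versus $c_2=c_3$, the latter also passing through Lemma~\ref{lemma:x-2-y-2}. You instead split on whether both $L(x_2)\cap L(y_2)$ and $L(x_3)\cap L(y_3)$ are empty: in that case your Hall argument is cleaner than the paper's (the violating set is immediately a $4$-clique, contradicting the hypothesis, with no need to analyze $T=V(H)$); otherwise you merge one non-adjacent pair on a common color and reduce directly to the $2{+}2$ Lemma~\ref{lemma:x-2-y-2}, isolating the single blocked configuration $L(x_1)=L(y_1)=L(x_3)=W$ with $\emptyset\neq L(x_2)\cap L(y_2)\subseteq W$, which you color explicitly using the two $4$-cliques through $x_1,x_3,y_1$ to extract colors $p\in L(x_2)\setminus W$ and $q\in L(y_2)\setminus W$. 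I checked the details of that final configuration (including the two subcases on $L(y_3)\cap W$) and they are sound. What your route buys is that it bypasses Lemma~\ref{lemma:x-3-y-2} altogether and keeps the Hall step trivial; what the paper's route buys is a normalization ($|L(x_i)\cap L(y_i)|\le 1$) that pins down the failure pattern very explicitly, at the cost of a longer chain of reductions.
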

\begin{proof}
If $H$ is $L$-colorable then clearly every clique $Q$ of $H$ satisfies
$|L(Q)|\ge |Q|$.  Now let us prove the converse.  We first claim that:
\begin{equation}\label{lxiyi2}
\mbox{We may assume that $|L(x_i) \cap L(y_i)| \le 1$ for each 
$i\in\{2,3\}$.}
\end{equation}
Suppose on the contrary, and up to symmetry, that $|L(x_2) \cap
L(y_2)|\ge 2$.  Let $H'= H \setminus \{x_2\}$, and set $L'(y_2)=
L(x_2)\cap L(y_2)$ and $L'(u)=L(u)$ for all $u\in
\{x_1,x_3,y_1,y_3\}$.  Thus $H'$ and $L'$ satisfy the hypothesis of
Lemma~\ref{lemma:x-3-y-2}.  If every clique $Q$ in $H'$ satifies
$|L'(Q)|\ge |Q|$, then Lemma~\ref{lemma:x-3-y-2} implies that $H'$
admits an $L'$-coloring, and we can extend it to an $L$-coloring of
$H$ by giving to $x_2$ the color assigned to $y_2$.  Hence assume that
some clique $Q$ in $H'$ satisfies $|L'(Q)|< |Q|$.  We have $|L'(Q)|\ge
2$, so $|Q|\ge 3$, so $3\le |L'(Q)|< |Q|\le 4$, and so $|L'(Q)|=3$ and
$|Q|=4$.  Since $x_3$ and $y_3$ play symmetric roles here, we may
assume up to symmetry that $Q=\{x_1,y_1,y_2,y_3\}$, and
$L'(Q)=\{a,b,c\}$, where $a,b,c$ are three distinct colors.  Hence
$L(x_1)= L(y_1)= L(y_3)=\{a,b,c\}$.  Since $|L(Q)|\ge 4$, there is a
color $d\in L(y_2)\setminus\{a,b,c\}$.  Since
$|L(\{x_1,y_1,x_2,y_3\})|\ge 4$, there is a color $e\in
L(y_2)\setminus\{a,b,c\}$.  If $a\in L(x_3)$, then we can assign color
$a$ to $x_3$ and $y_3$, colors $b$ and $c$ to $x_1$ and $y_1$, color
$e$ to $x_2$ and color $d$ to $y_2$.  So assume that $a\notin L(x_3)$,
and similarly that $b,c\notin L(x_3)$.  Then we can assign colors
$a,b,c$ to $x_1$, $y_1$, $y_3$, color $e$ to $x_2$, color $d$ to
$y_2$, and a color from $L(x_3)\setminus\{d,e\}$ to $x_3$.  Thus
(\ref{lxiyi2}) holds.

\medskip

It follows from (\ref{lxiyi2}) that $|L(x_i)\cup L(y_i)|\ge 5$ for
$i=2,3$.  If the family $\{L(x) \mid x \in V(H)\}$ admits a system of
distinct representatives, then this is an $L$-coloring.  So suppose
the contrary.  By Hall's theorem there is a set $T\subseteq V(H)$ such
that $|L(T)|< |T|$.  By the assumption, $T$ is not a clique, so it
contains $x_i$ and $y_i$ for some $i\in\{2,3\}$.  By (\ref{lxiyi2}) we
have $|L(T)|\ge 5$, so $|T|\ge 6$, hence $T=V(H)$, and $|L(T)|=5$, and
consequently $|L(x_i)|=|L(y_i)|=3$ and $|L(x_i)\cap L(y_i)|= 1$ for
each $i=2,3$.  Let $L(x_i) \cap L(y_i) = \{c_i\}$ for $i = 2, 3$.

\medskip

Suppose that $c_2 \neq c_3$.  We assign color $c_i$ to $x_i$ and $y_i$
for each $i=2,3$.  If this coloring can be extended to $\{x_1, y_1\}$
we are done.  So suppose the contrary.  Then it must be that $L(x_1)=
L(y_1)= \{b,c_2,c_3\}$ for some color $b\in L(H)\setminus\{c_2,c_3\}$.
Then we can color $H$ as follows.  Assign colors $c_2$ and $c_3$ to
$x_1$ and $y_1$.  There are four ways to color $x_2$ and $y_2$ with
one color from $L(x_2)\setminus\{c_2\}$ for $x_2$ and one color from
$L(y_2)\setminus\{c_2\}$ for $y_2$; at most two of them use a pair of
colors equal to $L(x_3)\setminus\{c_3\}$ or $L(y_3)\setminus\{c_3\}$,
so we can choose another way, and there will remain a color for $x_3$
and a color for $y_3$.

\medskip

Now suppose that $c_2 = c_3$; call this color $c$.  Let $L'(v) = L(v)
\setminus \{c\}$ for all $v \in V(H) \setminus \{x_3, y_3\}$.  We may
assume that the graph $H \setminus \{x_3, y_3\}$ does not admit an
$L'$-coloring, for otherwise such a coloring can be extended to $H$ by
assigning color $c$ to $x_3$ and $y_3$.  Hence, by
Lemma~\ref{lemma:x-2-y-2} there is a clique $Q$ of size~$3$ in $H
\setminus \{x_3, y_3\}$ such that $|L'(Q)|=2$, say $L'(Q)=\{a,b\}$.
So $L(u) = \{a, b, c\}$ for all $u \in Q$.  Moreover $Q$ consists of
$x_1,y_1$ and one of $x_2,y_2$.  We assign color $a$ to $x_1$, color
$b$ to $y_1$, and color $c$ to $x_2$ and $y_2$.  Since
$|L(Q\cup\{x_3\})|\ge 4$, there is a color $d\in
L(x_3)\setminus\{a,b,c\}$, and similarly there is a color $e\in
L(y_3)\setminus\{a,b,c\}$.  We assign $d$ to $x_3$ and $e$ to $y_3$,
and we obtain an $L$-coloring of $H$.

\medskip

Finally we prove the last sentence of the lemma.  Since $x_1$ and
$y_1$ are in all cliques of size $4$, the assumption that $|L(x_1)
\cup L(y_1)|\ge 4$ implies that every clique $Q$ of $H$ satisfies
$|L(Q)| \geq |Q|$.  So $H$ is $L$-colorable.
\end{proof}

\begin{lemma}\label{lem:c2}
Let $H$ be a cobipartite graph with $\omega(H)\le 4$.  Let $x,y$ be
two adjacent vertices in $H$ such that $N(x)\setminus\{y\}$ and
$N(y)\setminus\{x\}$ are cliques and $V(H)=N(x)\cup N(y)$.  Let $L$ be
a list assignment such that $|L(x)|\ge 2$, $|L(y)|\ge 2$, and
$|L(v)|\ge 4$ for all $v\in V(H)\setminus \{x,y\}$.  Then $H$ is
$L$-colorable.
\end{lemma}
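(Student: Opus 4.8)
The plan is to describe $H$ relative to $x$ and $y$, peel off $x$ and $y$, and then reduce the remaining cobipartite graph to Lemmas~\ref{lemma:x-2-y-2}, \ref{lemma:x-3-y-2} and \ref{lemma:x-3-y-3}. First I would let $P$, $Q$, $Z$ be the sets of vertices of $V(H)\setminus\{x,y\}$ adjacent to $x$ only, to $y$ only, and to both, respectively; since $V(H)=N(x)\cup N(y)$, these partition $V(H)\setminus\{x,y\}$. As $N(x)\setminus\{y\}=P\cup Z$ and $N(y)\setminus\{x\}=Q\cup Z$ are cliques, $\{x\}\cup P\cup Z$ and $\{y\}\cup Q\cup Z$ are cliques of $H$, hence of size at most $4$; so $\deg(x)=1+|P|+|Z|\le 3$, $\deg(y)\le 3$, every vertex of $Z$ is universal in $H$, and therefore $|Z|\le 2$ (otherwise $\{x,y\}\cup Z$ is a $K_5$). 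Every non-edge of $H$ joins $\{x\}\cup P$ to $\{y\}\cup Q$, with $x$ anticomplete to $Q$ and $y$ anticomplete to $P$, and I would record that every clique $C$ of $H$ satisfies $|L(C)|\ge|C|$ (a clique of size $\ge 3$ contains a vertex outside $\{x,y\}$, whose list has size $\ge 4\ge|C|$; and $|L(x)\cup L(y)|\ge 2$).

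Next I would dispose of the degenerate case: if $\deg(x)\le 1$ then $P=Z=\emptyset$ and $x$ is pendant on the clique $\{y\}\cup Q$, which is $L$-colourable by Hall's theorem and the clique condition, after which $x$ extends; symmetrically if $\deg(y)\le 1$. So assume $\deg(x),\deg(y)\ge 2$. The main step is to choose colours $c_x\in L(x)$, $c_y\in L(y)$ with $c_x\ne c_y$ (a good choice will be specified below) and then $L''$-colour $H'':=H-x-y$, where $L''$ removes $c_x$ from the lists of $P\cup Z$ and $c_y$ from the lists of $Q\cup Z$; this leaves lists of size $\ge 3$ on $P\cup Q$ and $\ge 2$ on $Z$, and any such colouring extends to $H$ by colouring $x,y$ with $c_x,c_y$. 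Now $H''$ is cobipartite with $\omega(H'')\le 4$, and no $4$-clique of $H''$ lies inside $P\cup Z$ or inside $Q\cup Z$ (adding $x$, resp.\ $y$, would complete a $K_5$), so every $4$-clique of $H''$ meets both $P$ and $Q$. Using $|P|+|Z|\le 3$, $|Q|+|Z|\le 3$ and $\omega(H'')\le 4$, one checks that $H''$ is, up to relabelling its parts, an induced subgraph of one of the graphs treated in Lemmas~\ref{lemma:x-2-y-2}, \ref{lemma:x-3-y-2}, \ref{lemma:x-3-y-3}: when $|Z|=0$, the case $|P|=|Q|=3$ forces the non-edges of $H''$ between $P$ and $Q$ to contain a matching of size $2$ (by König's theorem and $\omega(H'')\le 4$), the smaller patterns embed likewise, and for $|Z|\in\{1,2\}$ there are at most $7$ vertices and one peels off the universal vertices of $Z$. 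By those lemmas it then suffices to choose $c_x,c_y$ so that every clique $C$ of the enclosing graph satisfies $|L''(C)|\ge|C|$.

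To analyse the colour choice I would compute, for any clique $C$ of $H''$, that $L''(C)=L(C)\setminus S$ with $S\subseteq\{c_x,c_y\}$, where $c_x\in S$ iff $c_x\in L(C)\setminus L(C\cap Q)$ and $c_y\in S$ iff $c_y\in L(C)\setminus L(C\cap P)$. Since $|L(C)|\ge 4$ always, and for a $4$-clique $|L(C\cap Q)|\ge 4$ and $|L(C\cap P)|\ge 4$ (both $C\cap Q$ and $C\cap P$ are nonempty and avoid $x,y$), the sets $L(C)\setminus L(C\cap Q)$ and $L(C)\setminus L(C\cap P)$ have size at most $|L(C)|-4$. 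Hence cliques of size $\le 3$ and $4$-cliques with $|L(C)|\ge 6$ never fail, when $|L(C)|=4$ every vertex of $C$ has list exactly $L(C)$ so $S=\emptyset$, and the only genuine constraints come from $4$-cliques with $|L(C)|=5$; each such $C$ forbids at most one value $\beta_x(C)$ for $c_x$ together with at most one value $\beta_y(C)$ for $c_y$, i.e.\ it forbids only the single pair $(\beta_x(C),\beta_y(C))$. Since $|L(x)|,|L(y)|\ge 2$, a legal pair $(c_x,c_y)$ with $c_x\ne c_y$ avoiding all forbidden pairs should be available.

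I expect the real obstacle to be exactly this coordination: showing that the forbidden pairs coming from the various tight $4$-cliques cannot together cover all of $\{(c_x,c_y):c_x\in L(x),\,c_y\in L(y),\,c_x\ne c_y\}$. I would handle it by showing that either $H''$ has a single $4$-clique, or all of its $4$-cliques share one vertex of $P$ and one vertex of $Q$ — whose two lists (each of size $\ge 4$) then control every $\beta$-value, so that the problem reduces to choosing $c_x,c_y$ with $|L''(p)\cup L''(q)|\ge 4$ for that distinguished pair $p,q$, which a short list-size computation guarantees — and by settling the few vertex-poor configurations with $|Z|\in\{1,2\}$ by hand. Everything outside this coordination step is structural bookkeeping backed by the cobipartite lemmas already proved and by Hall's and König's theorems.
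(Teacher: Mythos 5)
Your proposal is correct in outline and, once the ``one checks'' steps are expanded, it does go through; but it organizes the proof differently from the paper. The paper splits on $|I|$ where $I=N(x)\cap N(y)\setminus\{x,y\}$ (your $Z$), and for $|I|=2$ and $|I|=1$ it argues directly on the reduced 6- or 7-vertex graph by case analysis on whether various pairs of lists intersect (assigning a common colour to a non-adjacent pair, then finishing greedily or by Hall); only in the case $I=\emptyset$ does it use your strategy of first fixing colours for $x$ and $y$ so that the distinguished pair of vertices lying in every $4$-clique keeps a union of lists of size at least $4$ (that is exactly the last sentence of Lemma~\ref{lemma:x-3-y-3}). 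You apply that strategy uniformly in all three cases, reducing everything to the ``colourable iff every clique $Q$ satisfies $|L(Q)|\ge|Q|$'' lemmas. Your uniform route is arguably cleaner: the key observations -- that a $4$-clique $C$ of $H''$ meets both $P$ and $Q$, that it can only violate Hall when $|L(C)|=5$ and then forbids a single ordered pair $(\beta_x(C),\beta_y(C))$, and that in every enclosing configuration all $4$-cliques share a vertex $p\in P$ and $q\in Q$ so that it suffices to keep $|L''(p)\cup L''(q)|\ge 4$ -- are all correct, and the final list-size computation (using $|L(p)|,|L(q)|\ge 4$ and $|L(x)|\ge 2$) does guarantee a good pair $(c_x,c_y)$ exists. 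What you would still need to write out is the enumeration of the enclosing configurations for $|Z|\in\{0,1,2\}$ and the check that each fits Lemma~\ref{lemma:x-2-y-2}, \ref{lemma:x-3-y-2} or \ref{lemma:x-3-y-3} after relabelling (in particular that the universal $Z$-vertices, whose reduced lists have size only $2$, can be placed in the small part $Y$ of those lemmas). Two small slips: $\deg(x)=1+|P|+|Z|\le 4$, not $\le 3$ (harmless, since what you actually use is $|P|+|Z|\le 3$); and your interim remark that a legal pair ``should be available'' from $|L(x)|,|L(y)|\ge 2$ alone is false when $L(x)=L(y)$ has size $2$ and two tight cliques forbid both off-diagonal pairs -- but you correctly flag this and your shared-vertex argument rules that situation out.
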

\begin{proof}
Let $X=N(x)\setminus\{y\}$ and $Y=N(y)\setminus\{x\}$.  Let $I=X\cap
Y$.  Since $\{x,y\}\cup I$ is a clique, we have $|I|\le 2$.

\medskip

First suppose that $|I|=2$.  Let $I=\{w,w'\}$.  Since $\{x\}\cup X$ is
a clique that contains $I$, we have $|X\setminus I|\le 1$.  Likewise
$|Y\setminus I|\le 1$.  We may assume that we are in the situation
where $X\setminus I$ and $Y\setminus I$ are non-empty and complete to
each other, because any other situation can be reduced to that one by
adding vertices or edges (which makes the coloring problem only
harder).  Let $X\setminus I=\{u\}$ and $Y\setminus I=\{v\}$.  Suppose
that $L(x)\cap L(v)\neq\emptyset$.  Pick a color $a\in L(x)\cap L(v)$,
assign it to $x$ and $v$, and remove it from the lists of all other
vertices.  Pick a color $b$ from $L(y)\setminus\{a\}$, assign it to
$y$ and remove it from the list of the vertices in $I$.  Let $L'$ be
the reduced list assignment.  Then $|L'(w)|\ge 2$, $|L'(w')|\ge 2$,
and $|L'(u)|\ge 3$, so we can $L'$-color greedily $w,w',u$ in this
order.  Hence assume that $L(x)\cap L(v)=\emptyset$, and similarly
that $L(y)\cap L(u)=\emptyset$.  Then $|L(x)\cup L(v)|\ge 6$ and
$|L(y)\cup L(u)|\ge 6$.  It follows that the family $\{L(z)\mid z\in
V(H)\}$ satisfies Hall's condition, so $H$ is $L$-colorable.

\medskip

Now suppose that $|I|=1$.  Let $I=\{w\}$.  Then $|X\setminus \{w\}|\le
2$ and $|Y\setminus \{w\}|\le 2$.  We may assume that we are in the
situation where $X\setminus I$ and $Y\setminus I$ have size $2$ and
there are three edges between them, because any other situation can be
reduced to that one by adding vertices or edges.  Let $X\setminus
I=\{u,v\}$ and $Y\setminus I=\{s,t\}$, and let $us,ut,vs\in E(H)$ and
$vt\notin E(H)$.  Suppose that $L(x)\cap L(s)\neq\emptyset$.  We pick
a color $a\in L(x)\cap L(s)$, assign it to $x$ and $s$, and remove it
from the lists of all other vertices.  Then it is easy to see that we
can color $y,t,w,u,v$ in this order, using colors from the reduced
lists.  Hence assume that $L(x)\cap L(s)=\emptyset$, and similarly
that $L(y)\cap L(u)=\emptyset$.  So $|L(x)\cup L(s)|\ge 6$ and
$|L(y)\cup L(u)|\ge 6$.  \\
Suppose that $L(x)\cap L(t)\neq\emptyset$.  We pick a color $a\in
L(x)\cap L(t)$, assign it to $x$ and $t$, and remove it from the lists
of all other vertices.  Since $L(x)\cap L(s)=\emptyset$, the list
$L(s)$ loses no color ($a\notin L(s)$).  If $L(y)\setminus\{a\}$ and
$L(v)\setminus \{a\}$ have a common element $b$, we assign it to $y$
and $v$, and it is easy to see that $w,u,s$ can be colored in this
order with the reduced lists.  On the other hand if
$L(y)\setminus\{a\}$ and $L(v)\setminus \{a\}$ are disjoint, then it
is easy to see that the family $\{L(z)\setminus\{a\}\mid z\in
V(H)\setminus\{x,t\}\}$ satisfies Hall's condition, so $H$ is
$L$-colorable.  Hence assume that $L(x) \cap L(t) = \emptyset$, and
similarly that $L(y) \cap L(v) = \emptyset$.  So $|L(x)\cup L(t)|\ge
6$ and $|L(y)\cup L(v)|\ge 6$.  \\
Suppose that $L(t) \cap L(v) \neq \emptyset$.  Pick a color $a \in
L(t) \cap L(v)$ and assign it to $t$ and $v$.  Since $L(y) \cap L(v) =
\emptyset$ and $L(x) \cap L(t) = \emptyset$ we have $L(y) = L(y)
\setminus \{a\}$ and similarly $L(x) = L(x) \setminus \{a\}$.  It
follows that the family $\{L(z) \setminus \{a\} \mid z \in V(H)
\setminus \{t, v\}\}$ satisfies Hall's condition.  Finally assume that
$L(t) \cap L(v) = \emptyset$.  So $|L(t)\cup L(v)|\ge 8$.  Then the
family $\{L(z) \mid z \in V(H)\}$ satisfies Hall's condition, so $H$
is $L$-colorable.

\medskip

Finally suppose that $I=\emptyset$.  We may assume that $X$ and $Y$
have size $3$ and that the non-edges between them form a matching of
size $2$, because any other situation can be reduced to that one by
adding vertices or edges.  Let $X = \{u_1, u_2, u_3\}$, $Y = \{v_1,
v_2, v_3\}$, and $E(\overline{H})= \{u_2v_2, u_3v_3\}$.  We can choose
a color $a$ from $L(x)$ and a color $b$ from $L(y)$ such that $L(u_1)
\setminus \{a\} \neq L(v_1) \setminus \{b\}$.  Let $L'(u) = L(u)
\setminus \{a\}$ for all $u \in X$ and $L'(v) = L(v) \setminus \{b\}$
for all $v \in Y$.  By the last sentence of Lemma~\ref{lemma:x-3-y-3},
$H \setminus \{x,y\}$ admits an $L'$-coloring, and we can extend it to
an $L$-coloring of $H$ by assigning color $a$ to $x$ and color $b$ to
$y$.
\end{proof}

\begin{lemma}\label{lem:c3elem}
Let $H$ be a cobipartite graph, where $V(H)$ is partitioned into two
cliques $X = \{x_1, x_2, x_3\}$ and $Y = \{y_1, y_2, y_3\}$, and
$E(\overline{H})= \{x_1y_1, x_2y_2, x_3y_3,$ $x_3y_1,$ $x_1y_2\}$.
Let $L$ be a list assignment on $V(H)$ such that $|L(x_3)| = 2$,
$|L(y_2)| = 2$, and $|L(w)| = 3$ for every $w \in V(H) \setminus
\{x_3, y_2\}$.  Then $H$ is $L$-colorable.
\end{lemma}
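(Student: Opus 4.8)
The plan is to color the two short-list vertices $x_3$ and $y_2$ first and then handle the rest with Lemma~\ref{lemma:x-2-y-2}. I would begin by recording the local structure: the edge $x_3y_2$ is present, $N(x_3)=\{x_1,x_2,y_2\}$, $N(y_2)=\{x_3,y_1,y_3\}$, and the only non-edge among $x_1,x_2,y_1,y_3$ is $x_1y_1$. Thus $H\setminus\{x_3,y_2\}$ is a cobipartite graph with parts $\{x_1,x_2\}$ and $\{y_1,y_3\}$ having exactly one non-edge, namely $x_1y_1$; after renaming so that $x_1,y_1$ play the role of the non-adjacent pair, this is precisely the setting of Lemma~\ref{lemma:x-2-y-2}.

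For each pair of colors $\alpha\in L(x_3)$ and $\beta\in L(y_2)$ with $\alpha\neq\beta$ (such a pair exists since $|L(x_3)|=|L(y_2)|=2$, and in fact at least two such pairs exist, since at most $|L(x_3)\cap L(y_2)|\le 2$ pairs are ruled out), set $L'(x_1)=L(x_1)\setminus\{\alpha\}$, $L'(x_2)=L(x_2)\setminus\{\alpha\}$, $L'(y_1)=L(y_1)\setminus\{\beta\}$, $L'(y_3)=L(y_3)\setminus\{\beta\}$; each of these lists has size at least $2$. If $H\setminus\{x_3,y_2\}$ admits an $L'$-coloring, then assigning $\alpha$ to $x_3$ and $\beta$ to $y_2$ extends it to an $L$-coloring of $H$, because $\alpha$ and $\beta$ were removed from the lists of all neighbors of $x_3$ and $y_2$ respectively in $H\setminus\{x_3,y_2\}$ and $\alpha\neq\beta$. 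By the claim restated in the proof of Lemma~\ref{lemma:x-2-y-2} (equivalently, by examining the two maximal cliques $\{x_1,x_2,y_3\}$ and $\{y_1,x_2,y_3\}$ of $H\setminus\{x_3,y_2\}$), the graph $H\setminus\{x_3,y_2\}$ is not $L'$-colorable only if one of the following holds: (i) $L'(x_1)=L'(x_2)=L'(y_3)$ and all three lists have size $2$; or (ii) $L'(y_1)=L'(x_2)=L'(y_3)$ and all three lists have size $2$. So it suffices to exhibit a valid pair $(\alpha,\beta)$ for which neither (i) nor (ii) holds.

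The crux is that (i) can hold only if $L(x_1)=L(x_2)$: indeed $L(x_1)\setminus\{\alpha\}=L(x_2)\setminus\{\alpha\}$ with both of size $2$ and $|L(x_1)|=|L(x_2)|=3$ forces $\alpha\in L(x_1)\cap L(x_2)$ and then $L(x_1)=L(x_2)$; symmetrically (ii) can hold only if $L(y_1)=L(y_3)$. Furthermore, when $L(x_1)=L(x_2)=M$, event (i) pins down $\alpha$ as the unique element of $M\setminus L(y_3)$ and $\beta$ as the unique element of $L(y_3)\setminus M$, so (i) can hold for at most one valid pair; symmetrically (ii) can hold for at most one valid pair. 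I would then conclude by a short case analysis according to whether $L(x_1)=L(x_2)$ and whether $L(y_1)=L(y_3)$: if both equalities fail, every valid pair works; if exactly one holds, only the corresponding one of (i), (ii) is available and it excludes at most one valid pair; and if both hold (say $L(x_1)=L(x_2)=M$ and $L(y_1)=L(y_3)=M'$), a direct computation shows that (i) and (ii) coincide with the single condition $M\setminus\{\alpha\}=M'\setminus\{\beta\}$ of size $2$, which again excludes at most one valid pair. Since at least two valid pairs are available, a good one always remains, and the proof is complete. I expect the only delicate part to be this last bit of bookkeeping, together with correctly transporting the dichotomy of Lemma~\ref{lemma:x-2-y-2} to the relabelled four-vertex graph $H\setminus\{x_3,y_2\}$.
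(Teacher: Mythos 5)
Your proof is correct, but it takes a genuinely different route from the paper's. The paper argues directly on the six lists: it successively tests whether each of the non-adjacent pairs $\{x_2,y_2\}$, $\{x_3,y_3\}$, $\{x_1,y_2\}$, $\{x_3,y_1\}$, $\{x_1,y_1\}$ has intersecting lists, in each case identifying the two vertices and finishing greedily along an induced path or small cycle; once all these intersections are assumed empty, the unions $|L(u)\cup L(v)|$ are large enough that Hall's condition holds and a system of distinct representatives gives the coloring. You instead color the two deficient vertices $x_3,y_2$ first with a pair of distinct colors $(\alpha,\beta)$ and reduce to $K_4$ minus the edge $x_1y_1$, which is exactly the setting of Lemma~\ref{lemma:x-2-y-2}; the work then lies in showing that among the at least two admissible pairs $(\alpha,\beta)$, at most one triggers a failure of that lemma's clique condition. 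Your bookkeeping is sound: condition (i) forces $L(x_1)=L(x_2)$ and then pins down $(\alpha,\beta)$ uniquely, symmetrically for (ii), and the crucial observation — without which the count $2-2=0$ would leave a gap — is that when both $L(x_1)=L(x_2)$ and $L(y_1)=L(y_3)$ hold, (i) and (ii) collapse to the single condition $M\setminus\{\alpha\}=M'\setminus\{\beta\}$, so together they exclude at most one admissible pair. Your approach buys modularity (it reuses Lemma~\ref{lemma:x-2-y-2} as a black box and is shorter), while the paper's is self-contained and follows the same "identify a non-adjacent pair or apply Hall" template it uses in the neighbouring lemmas.
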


\begin{proof}
Suppose that $L(x_2) \cap L(y_2) \neq \emptyset$.  Assign a color $a$
from $L(x_2) \cap L(y_2)$ to $x_2$ and $y_2$.  Let $L'(u) = L(u)
\setminus \{a\}$ for all $u\in \{x_1,x_3,y_1,y_3\}$.  Then we can
$L'$-color $x_3, x_1, y_3, y_1$ greedily in this order, because
$x_3$-$x_1$-$y_3$-$y_1$ is an induced path and the reduced lists' size
pattern is $(\ge 1, \ge 2, \ge 2, \ge 2)$.  The proof is similar when
$L(x_3) \cap L(y_3) \neq \emptyset$.  So we may assume that:
\begin{equation}\label{c3elem-eq1}
\mbox{$L(x_2) \cap L(y_2) = \emptyset$ and $L(x_3) \cup L(y_3) =
\emptyset$.}
\end{equation}

Suppose that $L(x_1) \cap L(y_2) \neq \emptyset$.  Assign a color $a$
from $L(x_1) \cap L(y_2)$ to $x_1$ and $y_2$.  Let
$L'(u)=L(u)\setminus\{a\}$ for all $u\in\{x_2,x_3,y_1,y_3\}$.  By
(\ref{c3elem-eq1}), we have $a\notin L(x_2)$, so $L'(x_2)=L(x_2)$, and
$a$ is in at most one of $L(x_3)$ and $L(y_3)$.  If $a\in L(x_3)$,
then we can $L'$-color greedily $x_3$, $x_2$, $y_1$, $y_3$ in this
order.  If $a\in L(y_3)$, then we can $L'$-color greedily $y_3$,
$y_1$, $x_2$, $x_3$ in this order.  The proof is similar when $L(x_3)
\cap L(y_1) \neq \emptyset$.  So we may assume that:
\begin{equation}\label{c3elem-eq2}
\mbox{$L(x_1) \cap L(y_2) = \emptyset$ and $L(x_3) \cap L(y_1) =
\emptyset$.}
\end{equation}

Suppose that $L(x_1) \cap L(y_1) \neq \emptyset$.  Assign a color $a$
from $L(x_1) \cap L(y_1)$ to $x_1$ and $y_1$.  Let
$L'(u)=L(u)\setminus\{a\}$ for all $u\in\{x_2,x_3,y_2,y_3\}$.  By
(\ref{c3elem-eq2}), we have $a\notin L(x_3)$ and $a\notin L(y_2)$.
The graph $H\setminus\{x_1,y_1\}$ is an even cycle, and $|L'(u)|\ge 2$
for every vertex $u$ in that graph, so it is $L'$-colorable.  So we
may assume that:
\begin{equation}\label{c3elem-eq3}
\mbox{$L(x_1) \cap L(y_1) = \emptyset$.}
\end{equation}
   
By (\ref{c3elem-eq1}), (\ref{c3elem-eq2}) and (\ref{c3elem-eq3}), we
have $|L(u) \cup L(v)| = 5$ whenever $\{u,v\}$ is any of
$\{x_2,y_2\}$, $\{x_3,y_3\}$, $\{x_1,y_2\}$, $\{x_3,y_1\}$, and
$|L(x_1) \cap L(y_1)| = 6$.  It follows that the family $\{L(w) \mid w
\in V(H)\}$ admits a system of distinct representatives, which is an
$L$-coloring for $H$.
\end{proof}

\begin{lemma}\label{lem:c3b}
Let $H$ be a cobipartite graph with $\omega(G)\le 4$.  Let $V(H)$ be
partitioned into two cliques $X,Y$ with $X=\{x_1,x_2,x_3\}$, such that
$x_1$ is complete to $Y$.  Let $L$ be a list assignment such that
$|L(x_1)|\ge 3$, $|L(x_2)|\ge 2$, $|L(x_3)|\ge 2$, and $|L(y)|\ge 4$
for all $y\in Y$.  Then $H$ is $L$-colorable.
\end{lemma}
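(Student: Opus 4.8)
The plan is to exploit $\omega(H)\le 4$ to control $|Y|$, then to colour the two small-list vertices $x_2,x_3$ first and extend the colouring to the clique $\{x_1\}\cup Y$.

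First, since $x_1$ is complete to $Y$ and $Y$ is a clique, $\{x_1\}\cup Y$ is a clique, so $|Y|\le 3$. If $|Y|\le 1$ then $|V(H)|\le 4$ and $H$ is $L$-colorable by colouring its vertices greedily in the order $x_2,x_3,x_1$ and then the vertex of $Y$ if there is one: at each step the current vertex has at most $0,1,2,3$ previously coloured neighbours respectively, which is strictly less than its list size. So assume $|Y|\in\{2,3\}$. Since $\omega(H)\le 4$, $H$ has no clique of size $5$; as $\{x_1,x_2,x_3\}$ together with two vertices of $Y$ both adjacent to $x_2$ and $x_3$ would form such a clique, \emph{at most one vertex of $Y$ is adjacent to both $x_2$ and $x_3$}; and for the same reason, if $|Y|=3$ then $|N(x_i)\cap Y|\le 2$ for $i=2,3$.

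The main step is: pick $\alpha\in L(x_2)$ and $\beta\in L(x_3)$ with $\alpha\ne\beta$ (possible since $|L(x_2)|,|L(x_3)|\ge 2$), colour $x_2$ with $\alpha$ and $x_3$ with $\beta$, and extend to the clique $Q=\{x_1\}\cup Y$ using the lists $L'(v)=L(v)$ minus the colours used on neighbours of $v$ in $\{x_2,x_3\}$. By Hall's theorem it suffices that $|L'(S)|\ge|S|$ for every $S\subseteq Q$. Now $|L'(x_1)|\ge 3-2=1$ and every $y\in Y$ has $|L'(y)|\ge 4-|N(y)\cap\{x_2,x_3\}|$; by the degree bounds just established, at least two vertices of $Y$ have $|L'(y)|\ge 3$ when $|Y|=3$, and (after relabelling) $y_2$ has $|L'(y_2)|\ge 3$ when $|Y|=2$. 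A routine inspection of all subsets of $Q$ then shows that the only inequality that can fail is $|L'(Q)|\ge|Q|$. In particular, when $|Y|=2$ we have $|L'(Q)|\ge|L'(y_2)|\ge 3=|Q|$, so any admissible choice of $\alpha,\beta$ finishes that case.

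It remains to treat $|Y|=3$, where we must choose $\alpha,\beta$ so that $|L'(Q)|\ge 4$; since there are two vertices $y,y'\in Y$ with $|L'(y)|,|L'(y')|\ge 3$, it is enough that $|L'(y)\cup L'(y')|\ge 4$. Here I would case on the adjacency pattern between $\{x_2,x_3\}$ and $Y$. In most patterns some vertex of $Y$ is anticomplete to $\{x_2,x_3\}$, so its reduced list keeps size $\ge 4$ and we are done immediately. The remaining dense configurations are essentially two: one vertex of $Y$ is the common neighbour of $x_2,x_3$ while $x_2,x_3$ each have exactly one further, distinct neighbour in $Y$ (this is the graph of Lemma~\ref{lemma:x-3-y-3}, but with smaller lists on $x_2,x_3$, which is why we colour those first); and $x_2$ has two neighbours in $Y$, $x_3$ has one, and these are disjoint. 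In both, I would first try to ``save'' an affected $y$ by taking $\alpha\in L(x_2)\setminus L(y)$ or $\beta\in L(x_3)\setminus L(y)$, which keeps $|L'(y)|\ge 4$; if this is impossible for every affected $y$, then $L(x_2)\subseteq L(y)$ for each neighbour $y$ of $x_2$ in $Y$ (and likewise for $x_3$), forcing all affected $Y$-lists to have size exactly $4$. Under that restriction one argues: two affected $y$'s that lose the same colour must have equal $4$-lists, and then they reduce to the same $3$-set only if $\alpha=\beta$, which is excluded; two affected $y$'s with distinct $4$-lists overlapping in a $3$-set are separated by choosing $\alpha$ (or $\beta$) inside the overlap, so that the distinguishing colour survives; and $y$'s that lose different colours contribute different reduced lists automatically. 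Verifying that in each dense configuration such an admissible pair $(\alpha,\beta)$ always exists is, I expect, the bulk of the argument and its only real difficulty.
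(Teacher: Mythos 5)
Your overall strategy -- colour $x_2,x_3$ first with distinct colours $\alpha,\beta$ and then apply Hall's condition to the remaining clique $\{x_1\}\cup Y$ -- is viable and genuinely different from the paper's proof, and your reductions up to the final step are sound: $|Y|\le 3$, at most one vertex of $Y$ sees both $x_2$ and $x_3$, the cases $|Y|\le 2$ close, and for $|Y|=3$ everything comes down to arranging $|L'(y)\cup L'(y')|\ge 4$ for the two vertices $y,y'$ of $Y$ with reduced lists of size $\ge 3$. But the decisive step is not proved. You explicitly defer it (``Verifying that \dots is, I expect, the bulk of the argument''), and the one concrete claim you do make about it is false: it is \emph{not} true that ``$y$'s that lose different colours contribute different reduced lists automatically.'' Take $L(y)=\{1,3,4,5\}$, $L(y')=\{2,3,4,5\}$, $\alpha=1$, $\beta=2$: both reduced lists equal $\{3,4,5\}$. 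This is exactly the situation in your first dense configuration ($x_2\sim y_1,y_2$, $x_3\sim y_1,y_3$, so $y_2$ loses $\alpha$ and $y_3$ loses $\beta$), which is the hardest case; as written your sketch would wrongly dismiss it as automatic. A correct argument does exist -- e.g.\ for fixed $\alpha$ at most one $\beta$ can produce $L(y_2)\setminus\{\alpha\}=L(y_3)\setminus\{\beta\}$ with both of size $3$, and one checks that among the $\ge 2$ choices for each of $\alpha,\beta$ some admissible pair survives -- but this is precisely the content you have omitted, so the proof is incomplete.

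For comparison, the paper avoids this choice problem entirely. For $|Y|\le 2$ it invokes Lemma~\ref{lemma:x-3-y-2}; for $|Y|=3$ it normalises the non-edges to the matching $\{x_2y_2,x_3y_3\}$, tests Hall's condition on all of $V(H)$ at once, and when it fails deduces $L(x_2)\cap L(y_2)\neq\emptyset$, assigns that common colour to the non-adjacent pair $x_2,y_2$, and repeats with $x_3,y_3$ (falling back on a direct system of distinct representatives otherwise). Identifying colours on non-adjacent pairs ``saves'' list elements and sidesteps the delicate selection of $(\alpha,\beta)$ that your route requires; if you want to keep your approach, you must carry out the case analysis you sketched, with the flawed third clause repaired.
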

\begin{proof}
Since $Y\cup\{x_1\}$ is a clique, we have $|Y|\le 3$.  If $|Y| \leq
2$, then Lemma~\ref{lemma:x-3-y-2} implies that $H$ is $L$-colorable.
So we may assume that $|Y| = 3$, say $Y = \{y_1, y_2, y_3\}$, and we
may assume that $E(\overline{H})= \{x_2y_2,x_3y_3\}$.  If the family
$\{L(w) \mid w \in V(H)\}$ admits a system of distinct
representatives, then this is an $L$-coloring of $H$, so assume the
contrary.  So there is a set $T\subseteq V(H)$ such that $|L(T)|<|T|$.
We have $|L(T)|\ge 2$, so $|T|\ge 3$, so $|L(T)|\ge 3$, so $|T|\ge 4$,
so $T\cap Y\neq\emptyset$, so $|L(T)|\ge 4$, and so $|T|\ge 5$.  It
follows that $T$ is not a clique.  Hence assume that $x_2,y_2\in T$.
If $L(x_2)\cap L(y_2)=\emptyset$, then $|L(T)|\ge |L(x_2) \cup L(y_2)|
= 6$, so $|T|\ge 7$, which is impossible.  Hence $L(x_2) \cap L(y_2)
\neq \emptyset$.  Assign a color $c_2$ from $L(x_2) \cap L(y_2)$ to
$x_2$ and $y_2$.  Define $L'(u) = L(u) \setminus \{c_2\}$ for all
$u\in V(H)\setminus\{x_2,y_2\}$.  If $L'(x_3) \cap L'(y_3) \neq
\emptyset$ assign a color $c_3$ from $L'(x_3) \cap L'(y_3)$ to $x_3$
and $y_3$.  Then we have $|(L'(x_1) \cup L'(y_1)) \setminus \{c_2\}|
\geq 2$, so we can extend the coloring to $\{x_1,y_1\}$.  On the other
hand, if $L'(x_3) \cap L'(y_3) = \emptyset$, the family $\{L'(w) \mid
w \in V(H) \setminus \{x_2, y_2\}\}$ admits a system of distinct
representatives.  So $H$ admist an $L$-coloring.
\end{proof}

\begin{lemma}\label{lem:c4}
Let $H$ be a cobipartite graph, where $V(H)$ is partitioned into two
cliques $X = \{x_1, x_2, x_3, x_4\}$ and $Y = \{y_1, y_2, y_3, y_4\}$,
and $E(\overline{H})= \{x_1y_1, x_1y_3,$ $x_1y_4,$ $x_2y_2, x_2y_3,
x_2y_4, x_3y_3, x_4y_4\}$.  Let $L$ be a list assignment on $V(H)$
such that $|L(x_1)| = 2$, $|L(x_2)| = 2$ and $|L(w)| = 4$ for all $w
\in V(H) \setminus \{x_1, x_2\}$.  Then $H$ is $L$-colorable.
\end{lemma}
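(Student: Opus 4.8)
\emph{Proof proposal.} The plan is to use the symmetry and small size of $H$ to reduce to the cobipartite lemmas already proved. Reading off $E(\overline H)$, in the opposite clique $x_1$ misses $y_1,y_3,y_4$, $x_2$ misses $y_2,y_3,y_4$, $x_3$ misses only $y_3$, and $x_4$ misses only $y_4$ (symmetrically $y_3$ misses $x_1,x_2,x_3$ and $y_4$ misses $x_1,x_2,x_4$). Hence $\omega(H)=4$, and every clique of size at least $3$ meets $\{x_3,x_4,y_1,y_2,y_3,y_4\}$, so every clique $Q$ satisfies $|L(Q)|\ge4\ge|Q|$. Both $\sigma=(x_1x_2)(y_1y_2)$ and $\tau=(x_3x_4)(y_3y_4)$ are automorphisms of $H$ that preserve the pattern of list sizes, so up to $\langle\sigma,\tau\rangle$ the eight cross non-edges fall into three orbits: $\{x_1y_1,x_2y_2\}$ (\emph{type A}), $\{x_1y_3,x_1y_4,x_2y_3,x_2y_4\}$ (\emph{type B}), and $\{x_3y_3,x_4y_4\}$ (\emph{type C}).

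The main reduction is: if one can pick $c_1\in L(x_1)\setminus L(y_2)$ and $c_2\in L(x_2)\setminus L(y_1)$ with $c_1\ne c_2$, then $H$ is $L$-colorable. Indeed colour $x_1=c_1$, $x_2=c_2$, delete them; in $H'=H-\{x_1,x_2\}$ the only non-edges between $X'=\{x_3,x_4\}$ and $Y=\{y_1,y_2,y_3,y_4\}$ are $x_3y_3,x_4y_4$, which form a matching in $\overline{H'}$; the reduced lists of $x_3,x_4$ still have size $\ge2$, the choice of $c_1,c_2$ keeps $|L'(y_1)|=|L'(y_2)|=4$, and $|L'(y_3)|=|L'(y_4)|=4$, so Lemma~\ref{lemma:x-y-4} colours $H'$, and this extends to $H$. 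This selection works unless $L(x_1)\subseteq L(y_2)$, or $L(x_2)\subseteq L(y_1)$, or $L(x_1)\setminus L(y_2)$ and $L(x_2)\setminus L(y_1)$ coincide in a single colour — three very rigid possibilities. Separately, if $L(u)\cap L(v)=\emptyset$ for every non-edge $uv$, then (using the non-edges $x_3y_3$ and $x_4y_4$) $|L(x_3)\cup L(y_3)|=|L(x_4)\cup L(y_4)|=8=|V(H)|$, and Hall's condition holds on $V(H)$: a would-be violator $T$ is not a clique, so contains a cross non-edge $uv$; if $\{u,v\}$ is $\{x_3,y_3\}$ or $\{x_4,y_4\}$ then $|L(T)|\ge8$, otherwise $u\in\{x_1,x_2\}$ forces $|L(T)|\ge6$ and $|T|\ge7$, whence $T$ still contains $\{x_3,y_3\}$ or $\{x_4,y_4\}$ — a contradiction. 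So the family $\{L(v)\}_{v\in V(H)}$ has a system of distinct representatives, i.e.\ an $L$-colouring.

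For type A, suppose $L(x_1)\cap L(y_1)\ne\emptyset$ (the case $L(x_2)\cap L(y_2)\ne\emptyset$ is symmetric). Colour $x_1$ and $y_1$ with a common colour $c$ and delete them; then $x_2$ is anticomplete to $\{y_2,y_3,y_4\}$. Pick $e\in L(x_2)\setminus\{c\}$ (possible since $|L(x_2)|=2$), delete $e$ from $L(x_3)$ and $L(x_4)$, and apply Lemma~\ref{lemma:x-y-4} to the cobipartite graph on $\{x_3,x_4,y_2,y_3,y_4\}$ (non-edges $x_3y_3,x_4y_4$; lists of size $\ge2$ on $x_3,x_4$ and $\ge3$ on $y_2,y_3,y_4$); finally set $c(x_2)=e$. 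This is an $L$-colouring of $H$.

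After these two steps we may assume all type-A intersections are empty, the main selection above fails (so $L(x_1)\subseteq L(y_2)$, $L(x_2)\subseteq L(y_1)$, or the singleton coincidence holds; note e.g.\ that $L(x_1)\subseteq L(y_2)$ with $L(x_2)\cap L(y_2)=\emptyset$ forces $L(x_1)\cap L(x_2)=\emptyset$), and — since the all-empty case is done — some type-B or type-C intersection is nonempty. Each such case I would finish by colouring one well-chosen non-edge monochromatically and reducing the $5$- or $6$-vertex remainder to one of Lemmas~\ref{lemma:x-2-y-2}, \ref{lemma:x-3-y-2}, \ref{lemma:x-3-y-3}, \ref{lem:c3b}, \ref{lemma:x-y-4} (often after first colouring a vertex such as $y_4$ that has become anticomplete to the opposite clique), or to a short direct argument: for type C one takes $c\in L(x_3)\cap L(y_3)$ outside $L(x_1)\cap L(x_2)$ whenever possible, so that the clique $\{x_1,x_2\}$ does not collapse, the residual $L(x_1)=L(x_2)$ situation being dispatched by colouring $X$ first (where the type-A hypothesis ensures $y_1,y_2$ lose nothing to $x_1,x_2$); for type B one uses $c\in L(x_1)\cap L(y_3)$, noting that then $c\notin L(x_3)$ and $c\notin L(y_1)$, which keeps two of the large lists full. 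The main obstacle — and the only genuine difficulty — is the bookkeeping here: ensuring $L(x_1),L(x_2)$ never empty out during a reduction, and preventing the terminal $2$-clique (one of $\{y_1,y_2\}$ or $\{x_3,x_4\}$) from ending with two equal singleton lists, i.e.\ exactly the obstruction analysed in Lemmas~\ref{lemma:x-2-y-2} and \ref{lemma:x-3-y-3}; no new idea is needed, but the case analysis is lengthy.
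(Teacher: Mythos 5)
Your first three cases are sound: the ``main reduction'' with $c_1\in L(x_1)\setminus L(y_2)$, $c_2\in L(x_2)\setminus L(y_1)$, $c_1\ne c_2$ correctly reduces to Lemma~\ref{lemma:x-y-4} (the four $Y$-lists keep size $4$); the all-disjoint Hall argument is correct; and the type-A reduction (colour $x_1,y_1$ alike, set $x_2$ aside, apply Lemma~\ref{lemma:x-y-4} to $\{x_3,x_4\}$ versus $\{y_2,y_3,y_4\}$) checks out. But the proof is not finished: the entire residual case --- type-A intersections empty, the main selection failing, and some type-B or type-C intersection nonempty --- is only sketched, and the sketch does not obviously go through. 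For instance, after collapsing the type-C non-edge $x_3y_3$ to a common colour, the remaining $6$-vertex graph on $\{x_1,x_2,x_4\}\cup\{y_1,y_2,y_4\}$ has the five non-edges $x_1y_1$, $x_2y_2$, $x_4y_4$, $x_1y_4$, $x_2y_4$; this pattern (two extra non-edges both incident to $y_4$) matches none of Lemmas~\ref{lemma:x-2-y-2}, \ref{lemma:x-3-y-2}, \ref{lemma:x-3-y-3}, \ref{lem:c3elem}, \ref{lem:c3b} or \ref{lemma:x-y-4}, and the list sizes $(\ge 1,\ge 1,\ge 3;\ge 3,\ge 3,\ge 3)$ do not fit them either. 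The claim for type B that $c\in L(x_1)\cap L(y_3)$ forces $c\notin L(x_3)$ is likewise unjustified unless you first dispose of type C, which you have not done. So what you call ``bookkeeping'' is exactly the part of the argument that carries the content, and it is missing.

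For comparison, the paper avoids your residual cases entirely by weakening the requirement on $(c_1,c_2)$: instead of demanding $c_1\notin L(y_2)$ and $c_2\notin L(y_1)$, it only arranges (by a one-line choice) that $|L'(y_1)\cup L'(y_2)|\ge 4$ after removing $c_2$ from $L(y_1)$ and $c_1$ from $L(y_2)$ --- and such a choice always exists. The price is that $y_1,y_2$ may drop to lists of size $3$, so Lemma~\ref{lemma:x-y-4} no longer applies directly; the paper then finishes with a short split on whether $L'(x_3)\cap L'(y_3)$ and $L'(x_4)\cap L'(y_4)$ are empty (colouring a non-edge monochromatically in the first case, invoking Hall in the second). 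If you want to salvage your write-up, replacing your rigid choice of $c_1,c_2$ by this weaker one and then analysing the single remaining $6$-vertex configuration is the cleanest route.
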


\begin{proof}
We choose colors $c_1,c_2$ with $c_1\in L(x_1)$, $c_2\in L(x_2)$ and
$c_1\neq c_2$, such that if $|L(y_1)\cap L(y_2)|=3$, then either
$\{c_1\}\neq L(y_2)\setminus L(y_1)$ or $\{c_2\}\neq L(y_1)\setminus
L(y_2)$.  This is possible as follows: if $|L(y_1)\cap L(y_2)|=3$, let
$\alpha$ be the color in $L(y_1)\setminus L(y_2)$, then choose $c_2\in
L(x_2)\setminus\{\alpha\}$ and $c_1\in L(x_1)\setminus\{c_2\}$.  We
assign color $c_1$ to $x_1$ and $c_2$ to $x_2$.  Let $L'(y_1) = L(y_1)
\setminus \{c_2\}$, $L'(y_2) = L(y_2) \setminus \{c_1\}$, $L'(x_3) =
L(x_3) \setminus \{c_1, c_2\}$, $L'(x_4) = L(x_4) \setminus \{c_1,
c_2\}$, $L'(y_3) = L(y_3)$ and $L'(y_4) = L(y_4)$.  So $|L'(u)|\ge 2$
for $u\in \{x_3,x_4\}$, $|L'(v)|\ge 3$ for $v\in \{y_1,y_2\}$, and
$|L'(w)|=4$ for $w\in \{y_3,y_4\}$.  Note that the choice of $c_1$ and
$c_2$ implies that $|L'(y_1)\cup L'(y_2)|\ge 4$.  Now we show that
$H\setminus\{x_1,x_2\}$ is $L'$-colorable.

\medskip

Suppose that $L'(x_3) \cap L'(y_3) \neq \emptyset$.  Assign a color
$c_3$ from $L'(x_3) \cap L'(y_3)$ to $x_3$ and $y_3$.  Define $L''(u)
= L'(u) \setminus \{c_3\}$ for all $u\in\{x_4, y_1, y_2, y_4\}$.  Note
that $|L''(x_4)|\ge 1$, $|L''(u)|\ge 2$ for $u\in\{y_1,y_2\}$, and
$|L''(y_4)|\ge 3$.  Assign a color $c_4$ from $L''(x_4)$ to $x_4$.
Since $|L'(y_1)\cup L'(y_2)|\ge 4$, it follows that $|(L''(y_1)\cup
L''(y_2))\setminus\{c_4\}|\ge 2$.  So we can $L''$-color greedily
$\{y_1,y_2\}$ and then $y_4$.  The proof is similar if $L'(x_4) \cap
L'(y_4) \neq \emptyset$.  Therefore we may assume that $L'(x_3) \cap
L'(y_3) = \emptyset$ and $L'(x_4) \cap L'(y_4) = \emptyset$, and so
$|L'(x_3) \cup L'(y_3)| = 6$ and $|L'(x_4) \cup L'(y_4)| =6$.  This
and the choice of $c_1$, $c_2$ implies that the family $\{L'(w) \mid w
\in V(H) \setminus \{x_1, x_2\}\}$ admits a system of distinct
representatives.
\end{proof}

\begin{lemma}\label{lem:c3}
Let $H$ be a cobipartite graph with $\omega(G)\le 4$.  Let $C$ be a
clique of size $3$ in $H$ such that for every $w\in C$, the set
$N(w)\setminus C$ is a clique.  Let $L$ be a list assignment such that
$|L(w)|= 3$ for all $w\in C$ and $|L(v)|= 4$ for all $v\in
V(H)\setminus C$.  Then $H$ is $L$-colorable.
\end{lemma}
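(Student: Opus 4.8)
The plan is to use the cobipartite structure together with the clique-neighborhood hypothesis to reduce $H$, case by case, either to a configuration already treated in Lemmas~\ref{lemma:x-2-y-2}, \ref{lemma:x-3-y-2}, \ref{lemma:x-3-y-3}, \ref{lem:c3elem}, \ref{lem:c3b}, \ref{lem:c4}, or to a direct application of Hall's theorem. Write $V(H)=P\cup R$ where $P$ and $R$ are disjoint cliques; since $\omega(H)\le 4$ we have $|P|,|R|\le 4$, hence $|V(H)|\le 8$. For each $w\in C$, the vertex $w$ is complete to $N(w)\setminus C$, which is a clique, so $\{w\}\cup(N(w)\setminus C)$ is a clique and therefore $|N(w)\setminus C|\le 3$. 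Exchanging $P$ and $R$ if necessary, we may assume that at least two vertices of $C$ lie in $P$; hence either $C\subseteq P$, or exactly two vertices of $C$ lie in $P$ and the third lies in $R$.

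I would first treat the case $C\subseteq P$, so $|P\setminus C|\le 1$. Then every non-edge of $H$ has one end in $R$, and if $P\setminus C=\{p\}$ the clique-neighborhood hypothesis forces $p$ to be complete to every vertex of $R$ having a neighbor in $C$; in particular, when $|R|=4$ one checks, using $\omega(H)\le 4$, that $p$ has a non-neighbor in $R$. I would finish this case by Hall's theorem: if Hall's condition fails, pick a tight set $T$ with $|L(T)|<|T|$; since all lists have size $\ge 3$ and $|C|=3$, the set $T$ meets $V(H)\setminus C$, so $|L(T)|\ge 4$ and $|T|\ge 5$; since $\omega(H)\le 4$, $T$ is not a clique and hence contains a non-edge $ab$ with $b\in R$, so $|L(b)|=4$; from $|L(a)\cup L(b)|\le|L(T)|<|T|\le 8$ one deduces $L(a)\cap L(b)\ne\emptyset$ (choosing the non-edge to be incident to $p$ when necessary, so that $|L(a)|=4$ as well); coloring $a$ and $b$ with a common color, deleting that color, and iterating---or invoking Lemma~\ref{lemma:x-3-y-3} once the residual graph is a $3\times 3$ cobipartite graph whose non-edges form a matching---yields an $L$-coloring.

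Next I would treat the case where $C$ has two vertices $w_1,w_2$ in $P$ and the third vertex $w_3$ in $R$. Then every vertex of $H$ has a neighbor in $C$ (a vertex of $P$ is adjacent to $w_1$ and $w_2$; a vertex of $R$ is adjacent to $w_3$), which together with $|N(w_i)\setminus C|\le 3$ bounds $H$ tightly. Here the clique-neighborhood condition does the essential work: since $N(w_1)\setminus C$, $N(w_2)\setminus C$ and $N(w_3)\setminus C$ are cliques, it constrains which vertices of $P\setminus C$ and $R\setminus C$ can be adjacent, restricting the bipartite complement between $P$ and $R$ to a short list of patterns. For each such pattern I would either exhibit a non-edge $ab$ with $a\notin C$ whose lists meet, color both ends with a common color, delete it, and observe that the residual lists (of size $\ge 2$ on the surviving vertices of $C$ and of size $\ge 3$ elsewhere) together with the residual structure match one of Lemmas~\ref{lemma:x-2-y-2}, \ref{lemma:x-3-y-2}, \ref{lem:c3elem}, \ref{lem:c4}; or, when no such non-edge is available, run Hall directly, using $\omega(H)\le 4$ and the list-size pattern to force $T=V(H)$ and then derive a contradiction from the values of the lists on $C$.

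The step I expect to be the main obstacle is the largest configuration, $|P|=|R|=4$ (so $|V(H)|=8$), where Hall's inequality leaves the most slack: there one must pin down all eight non-edges precisely from the clique-neighborhood hypothesis and recognize the resulting graph, after at most one color-and-delete step, as a relabeling of the graph of Lemma~\ref{lem:c4}. A secondary, purely bookkeeping difficulty is that a color shared along a non-edge incident to $C$ leaves two vertices of $C$ with lists of size only $2$, so each such reduction must be routed through a lemma whose hypothesis on the $C$-side requires only lists of size $2$; this is precisely why the present lemma assumes lists of size $3$, rather than $2$, on $C$.
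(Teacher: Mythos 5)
Your overall strategy --- bound $|V(H)|$ by $8$ using the cobipartite structure and $\omega\le 4$, then run a case analysis that routes each configuration through one of the earlier cobipartite lemmas or through Hall's theorem --- is the same as the paper's (the paper organizes the cases by $n=|V(H)|$ and by the matching of non-edges guaranteed by K\H{o}nig's theorem, rather than by the position of $C$ relative to the bipartition, but that difference is cosmetic). The problem is that what you have written is a plan, not a proof: every configuration you identify as hard is deferred with ``I would'' or ``I expect'', and the two concrete reductions you do commit to are wrong. First, in the case $C\subseteq P$ you propose to pick a non-edge $pb$ with $p\in P\setminus C$ and $b\in R$, give both ends a common color $c$, delete $c$ from the other lists, and then invoke Lemma~\ref{lemma:x-3-y-3} on the residual $3\times 3$ graph. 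But deleting $c$ leaves the three vertices of $C$ with lists of size $2$, while Lemma~\ref{lemma:x-3-y-3} requires every list to have size at least $3$ (and, when the non-edges of $\overline{H}$ between $P$ and $R$ form a perfect matching of size $4$, the residual has three non-edges, not the two that Lemma~\ref{lemma:x-3-y-3} allows). Moreover the step itself is unsafe: if all three vertices of $C$ carry the same list $\{c,\alpha,\beta\}$ and $p$ is complete to $C$, then no proper coloring can give $p$ the color $c$, yet your procedure may assign $c$ to $p$ and $b$, after which the residual instance is uncolorable ($|L'(C)|=2<3$) even though the original one need not be. In Lemma~\ref{lemma:x-y-4} the analogous step is safe because the residual hypotheses are unconditionally sufficient for colorability; here they are not, so the choice of non-edge and color must be controlled, which is exactly the disjointness bookkeeping your sketch omits.

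Second, your stated resolution of the hardest case $|P|=|R|=4$ --- recognizing the graph, ``after at most one color-and-delete step, as a relabeling of the graph of Lemma~\ref{lem:c4}'' --- cannot work: a color-and-delete step removes two vertices, leaving six, whereas Lemma~\ref{lem:c4} is an eight-vertex configuration; and with no deletion the list pattern $(3,3,3,4,4,4,4,4)$ of the present lemma does not match the pattern $(2,2,4,4,4,4,4,4)$ of Lemma~\ref{lem:c4} either. In the paper the $n=8$ case splits in two: when $C$ lies inside one side of the bipartition, one peels off the low-degree vertex $v_8$ and applies Lemma~\ref{lemma:x-y-4} to the remaining seven vertices with $X=C$; when $C$ straddles the two sides, one needs a chain of four successive color-and-delete reductions, each either routed through Lemma~\ref{lemma:x-3-y-2} or Lemma~\ref{lem:c3elem} or justified by a Hall computation that uses the disjointness conditions accumulated at the previous steps. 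None of this is present in your proposal, so the configurations that carry the actual content of the lemma remain unproved.
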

\begin{proof}
If $H$ is not connected, it has two components $H_1,H_2$ and both are
cliques of size at most $4$.  The hypothesis implies easily that for
each $i\in\{1,2\}$ the family $\{L(u)\mid u\in V(H_i)\}$ satisfies
Hall's theorem, and consequently $H$ is $L$-colorable.  Hence we
assume that $H$ is connected.  Let $n=|V(H)|$ and $V(H)=\{v_1, \ldots,
v_n\}$.  The hypothesis implies that $n\le 8$.  Let $\mu=n-4$.  Since
$\omega(H)=4$, K\H{o}nig's theorem implies that $\overline{H}$ has a
matching of size $\mu$.  We may assume that the pairs
$\{v_i,v_{i+\mu}\}$ ($i=1,\ldots,\mu$) form such a matching.  We may
also assume that $E(H)$ is maximal under the hypothesis of the lemma,
since adding edges can only make the problem harder.

\medskip

First suppose that $n=4$.  The hypothesis implies that the family
$\{L(u)\mid u\in V(H)\}$ satisfies Hall's theorem, and consequently
$H$ is $L$-colorable.

\medskip

Now suppose that $n=5$.  So $\mu=1$ and $v_1v_2 \in E(\overline{H})$.
Up to symmetry, we have either $C=\{v_3,v_4,v_5\}$ or
$C=\{v_1,v_3,v_4\}$.  If $C=\{v_3,v_4,v_5\}$, then we can $L$-color
greedily the vertices $v_3, v_4, v_5, v_1, v_2$ in this order.  If
$C=\{v_1,v_3,v_4\}$, then we can $L$-color greedily the vertices $v_1,
v_3, v_4, v_5, v_2$ in this order.

\medskip

Now suppose that $n=6$.  So $\mu=2$ and $\{v_1v_3, v_2v_4\}\subseteq
E(\overline{H})$.  Up to symmetry, we have either $C=\{v_1,v_5,v_6\}$
or $C=\{v_1,v_2,v_5\}$.  Suppose that $C=\{v_1,v_5,v_6\}$.  Since
$\{v_1,v_2,v_4\}$ is not a stable set of size $3$ and $N(v_1)\setminus
C$ is a clique, $v_1$ is adjacent to exactly one of $v_2,v_4$, say to
$v_4$ and not to $v_2$.  Then we can $L$-color greedily the vertices
$v_1, v_5, v_6, v_4, v_3, v_2$ in this order.  Suppose that
$C=\{v_1,v_2,v_5\}$.  By the maximality of $E(H)$ we may assume that
$E(\overline{H})= \{v_1v_2,$ $v_3v_4\}$.  Then
Lemma~\ref{lemma:x-3-y-3} (with $X=C$, $Y=V(H)\setminus C$, $x_1=v_5$
and $y_1=v_6$) implies that $H$ is $L$-colorable.

\medskip

Now suppose that $n=7$.  So $\mu=3$, and $\{v_1v_4, v_2v_5,
v_3v_6\}\subseteq E(\overline{H})$.  Up to symmetry, we have either
$C=\{v_1,v_2,v_3\}$ or $C=\{v_1,v_2,v_7\}$.  If $C=\{v_1,v_2,v_3\}$,
then, by the maximality of $E(H)$ we may assume that $E(\overline{H})=
\{v_1v_4$, $v_2v_5$, $v_3v_6\}$, and by Lemma~\ref{lemma:x-y-4} (with
$X=C$ and $Y=V(H)\setminus C$), $H$ is $L$-colorable.  So suppose that
$C=\{v_1,v_2,v_7\}$.  For each $i\in\{1,2\}$, $v_i$ has exactly one
neighbor in $\{v_3,v_6\}$, for otherwise either $\{v_i,v_3,v_6\}$ is a
stable set of size $3$ or $N(v_i)\setminus C$ is not a clique.  This
leads to the following two cases (a) and (b): 

(a) $v_1$ and $v_2$ have the same neighbor in $\{v_3,v_6\}$.  We may
assume that $v_1v_3,$ $v_2v_3\in E(H)$ and $v_1v_6,v_2v_6\notin E(H)$.
Since $H$ is cobipartite, $\{v_1,v_2,v_3\}$ and $\{v_4,v_5,v_6\}$ are
cliques, and by the maximality of $E(H)$ we may assume that
$\{v_1v_5,$ $v_2v_4,$ $v_3v_4,$ $v_3v_5\}\subseteq E(H)$ and that
$v_7$ is complete to $\{v_1,\ldots,v_6\}$.  Pick a color $c$ from
$L(v_7)$, assign it to $v_7$, and set $L'(u)=L(u)\setminus \{c\}$ for
all $u\in V(H)\setminus\{v_7\}$.  By Lemma~\ref{lemma:x-y-4} (with
$X=\{v_1,v_2\}$ and $Y=\{v_3,v_4,v_5\}$), $H\setminus\{v_6,v_7\}$
admits an $L'$-coloring.  This can be extended to $v_6$ since $v_6$
has only two neighbors in $H\setminus\{v_7\}$.  So $H$ is
$L$-colorable.  

(b) $v_1$ and $v_2$ do not have the same neighbor in $\{v_3,v_6\}$.
We may assume that $v_1v_3, v_2v_6\in E(H)$ and $v_1v_6,v_2v_3\notin
E(H)$.  Since $H$ is cobipartite, $\{v_1,v_3,v_5\}$ and
$\{v_2,v_4,v_6\}$ are cliques, and by the maximality of $E(H)$ we may
assume that $v_4v_5,v_5v_6\in E(H)$ and that $v_7$ is complete to
$\{v_1,\ldots,v_6\}$.  Pick a color $c$ from $L(v_7)$, assign it to
$v_7$, and set $L'(u)=L(u)\setminus \{c\}$ for all $u\in
V(H)\setminus\{v_7\}$.  By Lemma~\ref{lem:c3elem}, $H\setminus\{v_7\}$
is $L'$-colorable.  So $H$ is $L$-colorable.

\medskip

Now suppose that $n=8$.  So $\mu=4$ and $\{v_1v_5, v_2v_6, v_3v_7,
v_4v_8\}\subseteq E(\overline{H})$.  Up to symmetry we have
$C=\{v_1,v_2,v_3\}$.  For each $i\in\{1,2,3\}$, $v_i$ has exactly one
neighbor in $\{v_4,v_8\}$, for otherwise either $\{v_i,v_4,v_8\}$ is a
stable set of size $3$ or $N(v_i)\setminus C$ is not a clique.  This
leads to two cases: (a) $v_1, v_2,v_3$ have the same neighbor in
$\{v_4,v_8\}$; (b) only two of $v_1,v_2,v_3$ have a common neighbor in
$\{v_4,v_8\}$.  

Suppose that (a) holds.  We may assume that $v_1,v_2,v_3$ are all
adjacent to $v_4$ and not adjacent to $v_8$.  Since $H$ is
cobipartite, $\{v_1,\ldots,v_4\}$ and $\{v_5,\ldots,v_8\}$ are
cliques, and by the maximality of $E(H)$ we may assume that
$E(\overline{H})=\{v_1v_5,v_2v_6,v_3v_7,v_4v_8,v_1v_8,v_2v_8,v_3v_8\}$.
By Lemma~\ref{lemma:x-y-4} (with $X=\{v_1,v_2,v_3\}$ and
$Y=\{v_4,v_5,v_6,v_7\}$), $H\setminus\{v_8\}$ admits an $L'$-coloring.
This can be extended to $v_8$ since $v_8$ has only three neighbors in
$H$.  So $H$ is $L$-colorable.

Therefore we may assume that (b) holds.  We may assume that $v_1v_4,$
$v_2v_4,$ $v_3v_8\in E(H)$ and $v_1v_8,v_2v_8, v_3v_4\notin E(H)$.
Since $H$ is cobipartite, $\{v_1,v_2,v_4,v_7\}$ and $\{v_3,v_5,
v_6,v_8\}$ are cliques, and by the maximality of $E(H)$ we may assume
that $E(\overline{H})= \{v_1v_5, v_2v_6, v_3v_7, v_4v_8, v_1v_8,v
_2v_8, v_3v_4\}$.

\medskip

Suppose that $L(v_3) \cap L(v_7) \neq \emptyset$.  Assign a color $c$
from $L(v_3) \cap L(v_7)$ to $v_3$ and $v_7$.  Define $L'(w) = L(w)
\setminus \{c\}$ for every $w \in V(H) \setminus \{v_3, v_7\}$.  By
Lemma~\ref{lemma:x-3-y-2}, $H \setminus \{v_3,v_7,v_8\}$ admits an
$L'$-coloring.  This can be extended to $v_8$ since $v_8$ has only two
neighbors in $H\setminus\{v_3,v_7\}$.  So we may assume that:  
\begin{equation}\label{c3-eq2}
\mbox{$L(v_3) \cap L(v_7) =\emptyset$.}
\end{equation}

Suppose that $L(v_1) \cap L(v_5) \neq \emptyset$.  Assign a color $c$
from $L(v_1) \cap L(v_5)$ to $v_1$ and $v_5$.  Define $L'(w) = L(w)
\setminus \{c\}$ for every $w \in V(H) \setminus \{v_1, v_5\}$.  By
Lemma~\ref{lem:c3elem} the graph $H \setminus \{v_1, v_5\}$ is
$L'$-colorable.  The proof is similar if $L(v_2)\cap L(v_6)\neq
\emptyset$.  So we may assume that:
\begin{equation}\label{c3-eq3}
\mbox{$L(v_1) \cap L(v_5) = \emptyset$ and $L(v_2) \cup L(v_6) =
\emptyset$.}
\end{equation}

Suppose that $L(v_3) \cap L(v_4) \neq \emptyset$.  Assign a color $c$
from $L(v_3) \cap L(v_4)$ to $v_3$ and $v_4$.  Define $L'(w) = L(w)
\setminus \{c\}$ for every $w \in V(H) \setminus \{v_3, v_4\}$.  By
(\ref{c3-eq2}), we have $c\notin L(v_7)$, so $L'(v_7) = L(v_7)$.
Hence and by (\ref{c3-eq2}) and (\ref{c3-eq3}), the family $\{L'(w)
\mid w \in V(H) \setminus \{v_3, v_4\}\}$ admits a system of distinct
representatives.  So we may assume that:
\begin{equation}\label{c3-eq4}
\mbox{$L(v_3) \cup L(v_4) = \emptyset$.}
\end{equation}

Suppose that $L(v_4) \cap L(v_8) \neq \emptyset$.  Assign a color $c$
from $L(v_4) \cap L(v_8)$ to $v_4$ and $v_8$.  Define $L'(w) = L(w)
\setminus \{c\}$ for every $w \in V(H) \setminus \{v_4, v_8\}$.  By
(\ref{c3-eq4}), we have $c\notin L(v_3)$, so $L'(v_3) = L(v_3)$.  By
(\ref{c3-eq2}), (\ref{c3-eq3}) and (\ref{c3-eq4}), the family $\{L'(w)
\mid w \in V(H) \setminus \{v_4, v_8\}\}$ admits a system of distinct
representatives.  So we may assume that:
\begin{equation}\label{c3-eq5}
\mbox{$L(v_4) \cup L(v_8) = \emptyset$.}
\end{equation}

By (\ref{c3-eq2}), (\ref{c3-eq3}), (\ref{c3-eq4}) and (\ref{c3-eq5}),
we have $|L(v_i) \cup L(v_j)| = 7$ if the pair $\{i,j\}$ is any of
$\{1,5\}$, $\{2,6\}$, $\{3,7\}$ and $\{3,4\}$, and $|L(v_4) \cup
L(v_8)| = 8$.  It follows easily that the family $\{L(w) \mid w \in
V(H)\}$ admits a system of distinct representatives.
\end{proof}

\section{Elementary graphs}

Now we can consider the case of any elementary graph $G$ with
$\omega(G) \leq 4$.

\begin{theorem}\label{thm-elementary-omega4}
Let $G$ be an elementary graph with $\omega(G) \leq 4$. Then $ch(G) =
\chi(G)$.
\end{theorem}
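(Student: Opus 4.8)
The plan is to reduce the statement, via Theorems~\ref{thm:mafree} and~\ref{thm:galvin}, to a bounded case analysis on the augments which is dispatched by the cobipartite lemmas of Section~3. Since an elementary graph is perfect, $\chi(G)=\omega(G)\le 4$, and since $\chi(G)\le ch(G)$ always holds it suffices to show that $G$ is $\omega(G)$-choosable. If $\omega(G)\le 3$ this is the theorem of~\cite{Gravier2004211}, so I assume $\omega(G)=4$, fix a list assignment $L$ with $|L(v)|=4$ for all $v$, and aim to produce an $L$-colouring of $G$. By Theorem~\ref{thm:mafree}, $G$ is an augmentation of the line-graph $H=\mathcal{L}(B)$ of a bipartite multigraph $B$ along pairwise non-adjacent flat edges $x_1y_1,\dots,x_ky_k$, with augments $(X_i,Y_i)$, where each augment $A_i$ contains a non-edge and has a connected $X_i$--$Y_i$ bipartite part; in particular $|X_i|,|Y_i|\ge 2$. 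When $k=0$ we have $G=\mathcal{L}(B)$, and Theorem~\ref{thm:galvin} (whose proof applies verbatim to bipartite multigraphs), applied with any $\omega(G)$-colouring of $G$, shows $G$ is $\omega(G)$-choosable, because the resulting orientation has $d^+(v)\le\omega(G)-1<|L(v)|$ for every $v$. So I assume $k\ge 1$.

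Next I would record the local structure of an augment under $\omega(G)=4$. Since $x_iy_i$ is a flat edge of $\mathcal{L}(B)$, its two ends $x_i,y_i$ are edges of $B$ sharing a vertex $p_i$; if $p_i$ had a third incident edge $z$ then $\{x_i,y_i,z\}$ would be a triangle of $H$ through $x_iy_i$, so $\deg_B(p_i)=2$. Writing $x_i=p_iq_i$, $y_i=p_ir_i$, and putting $P_i=\mathcal{E}_B(q_i)\setminus\{x_i\}$, $Q_i=\mathcal{E}_B(r_i)\setminus\{y_i\}$, in $G$ the clique $X_i$ is complete to $P_i$ and anticomplete to $Q_i$, the clique $Y_i$ is complete to $Q_i$ and anticomplete to $P_i$, and these are the only neighbours of $X_i\cup Y_i$ outside $V(A_i)$ (we may assume, refining the augmentation if necessary, that distinct augments do not interact through their $P_i,Q_i$). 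Since $X_i\cup P_i$ and $Y_i\cup Q_i$ are cliques and $\omega(G)=4$ we get $|P_i|\le 4-|X_i|\le 2$ and $|Q_i|\le 4-|Y_i|\le 2$; combined with $|X_i|,|Y_i|\ge 2$, $\omega(A_i)\le 4$, and the connectivity and non-edge conditions on $A_i$, there remain only finitely many shapes for the cobipartite graph $G[V(A_i)\cup P_i\cup Q_i]$, whose two cliques are $X_i\cup P_i$ and $Y_i\cup Q_i$. These are precisely the configurations treated in Lemmas~\ref{lemma:x-y-4}--\ref{lem:c3}.

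For the colouring itself, deleting the vertices $x_i,y_i$ ($i=1,\dots,k$) from $H$ yields the line-graph $H'=\mathcal{L}(B')$ of the bipartite multigraph $B'$ obtained from $B$ by removing the corresponding edges, and $G[V(H')]=H'$ while $G$ is recovered from $H'$ by adding the augments. I first $L$-colour $H'$ using Theorem~\ref{thm:galvin} (it is $\omega(G)$-choosable, as above). Each augment $A_i$ then has, on $X_i$ and $Y_i$, lists reduced only by the colours used on the cliques $P_i$ and $Q_i$, hence of size $\ge 4-|P_i|\ge|X_i|$ on $X_i$ and $\ge|Y_i|$ on $Y_i$; one completes the colouring on each $A_i$ by invoking the cobipartite lemma matching its shape. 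The main obstacle is this last step: for the ``light'' augments (for instance $|X_i|=|Y_i|=2$ with one non-edge, or a $K_3+K_3$ with two non-edges) the crude bound $|L'(v)|\ge|X_i|$ is not enough --- $K_4$ minus an edge is not $2$-choosable --- so one must also use the freedom in the colouring of $H'$: Theorem~\ref{thm:galvin} supplies not one but a whole family of $L$-colourings of $H'$, and Lemmas~\ref{lemma:x-2-y-2} and~\ref{lemma:x-3-y-2} isolate the unique ``bad'' residual list patterns on such augments, which are then dodged by choosing the colours on $P_i$ and $Q_i$ suitably; equivalently, one leaves $P_i\cup Q_i$ uncoloured when colouring $H'$ and treats $G[V(A_i)\cup P_i\cup Q_i]$ as a single block to which Lemma~\ref{lem:c2}, \ref{lem:c3b} or~\ref{lem:c3} applies. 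Checking that every augment shape is covered and that the residual list sizes always meet the hypothesis of the corresponding lemma is the crux of the argument; the bounded size of $V(A_i)\cup P_i\cup Q_i$ makes this a finite verification.
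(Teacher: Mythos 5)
The skeleton of your argument (reduce to $\omega(G)=4$, write $G$ as an augmentation of a line-graph via Theorem~\ref{thm:mafree}, dispatch the base case by Theorem~\ref{thm:galvin}, finish the augments with the cobipartite lemmas) matches the paper's, but the step you yourself identify as the crux is missing, and it is not a finite verification. The paper proceeds by induction on the number $h$ of augments: it colours $G_{h-1}\setminus\{x,y\}$ (the graph with the last augment removed) by the induction hypothesis and tries to extend to $G[X\cup Y]$. For $(|X|,|Y|)=(3,3)$ and $(3,2)$ the extension can genuinely fail, and when it does the base colouring $f$ is shown to satisfy a precise ``bad'' pattern, e.g.\ $L(x_1)=L(y_1)$ and $f(u)=f(v)$ for the attachment vertices $u\in N_X$, $v\in N_Y$ (see (\ref{badf}), (\ref{badf2})). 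The decisive idea --- absent from your proposal --- is then to build a \emph{different} elementary graph $G^*$ by surgery on $G$ (in Case~2, delete the $X$--$Y$ edges and add two adjacent gadget vertices $u^*,v^*$ complete to $X$ and to $Y$ respectively; in Case~3, delete $x_3$ and give $y_2$ the list of $y_1$), equip it with a \emph{tailored} list assignment $L^*$ whose lists on $X\cup Y\cup\{u^*,v^*\}$ are prescribed $4$-sets built from $L(x_1)$ and $L(y_1)$, and apply the induction hypothesis to $(G^*,L^*)$. The clique structure of the gadget forces any $L^*$-colouring $f^*$ to violate the bad pattern, and $f^*$ restricted to $G\setminus(X\cup Y)$ then extends to $G$.

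Your substitute for this --- ``Galvin supplies a whole family of colourings, and the bad patterns are dodged by choosing the colours on $P_i,Q_i$ suitably'' --- does not work as stated. The bad pattern is a conjunction of conditions on the colours of two distinct vertices ($f(u)=d$ \emph{and} $f(v)=d'$), and Theorem~\ref{thm:galvin} only lets you constrain colourings by shrinking individual lists, which cannot express ``not both''; nor does it offer any mechanism for re-choosing the colouring of $H'$ once one choice fails. Your alternative of treating $G[V(A_i)\cup P_i\cup Q_i]$ as a single block also breaks down: the vertices of $P_i$ and $Q_i$ have neighbours outside the block (and distinct blocks may share such vertices --- your assumption that augments ``do not interact through their $P_i,Q_i$'' is unjustified), so their lists are already reduced and the hypotheses of Lemmas~\ref{lem:c2}, \ref{lem:c3b} and~\ref{lem:c3} need not hold. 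Finally, note that the paper disposes of small augments ($|X\cup Y|\le\omega(G)$, which includes your $K_4$-minus-an-edge example) not by list arguments at all but by completing $X\cup Y$ to a clique via edge duplication in the multigraph and applying induction; that reduction, too, is absent from your write-up.
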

\begin{proof}
This theorem holds for every graph $G$ with $\omega(G)\le 3$ as proved
in \cite{Gravier2004211}.  Hence we will assume that $\omega(G)=4$.
By Theorem~\ref{thm:mafree}, $G$ is the augmentation of the line-graph
${\cal L}(H)$ of a bipartite multigraph $H$.  Let $e_1, \ldots, e_h$
be the flat edges of ${\cal L}(H)$ that are augmented to obtain $G$.
We prove the theorem by induction on $h$.  If $h = 0$, then $G={\cal
L}(H)$; in that case the equality $ch(G) = \chi(G)$ follows from
Galvin's theorem \cite{Galvin1995153}.  Now assume that $h>0$ and that
the theorem holds for elementary graphs obtained by at most $h-1$
augmentations.  Let $(X,Y)$ be the augment in $G$ that corresponds to
the edge $e_h$ of ${\cal L}(H)$.  In ${\cal L}(H)$, let $e_h=xy$.  So
$x,y$ are incident edges of $H$.  In $H$, let $x=q_xq_{xy}$ and
$y=q_yq_{xy}$; so their common vertex $q_{xy}$ has degree $2$ in $H$.
Let $G_{h-1}$ be the graph obtained from ${\cal L}(H)$ by augmenting
only the $h-1$ other edges $e_1, \ldots, e_{h-1}$.  So $G_{h-1}$ is an
elementary graph.

Let $L$ be a list assignment on $V(G)$ such that $|L(v)|=\omega(G)$
for all $v\in V(G)$.  We will prove that $G$ admits an $L$-coloring.

\begin{equation}\label{eq:x-y-omega}
\mbox{We may assume that $|X \cup Y| > \omega(G)$.}
\end{equation}
Suppose that $|X \cup Y| \le \omega(G)$.  Let $H'$ be the graph
obtained from $H$ by duplicating $|X|-1$ times the edge $x$ (so that
there are exactly $|X|$ parallel edges between the two ends of $x$ in
$H$) and duplicating $|Y|-1$ times the edge $y$.  Let $G_{h-1}'$ be
the graph obtained from ${\cal L}(H')$ by augmenting the $h-1$ edges
$e_1, \ldots, e_{h-1}$ as in $G$.  Then $G_{h-1}'$ can also be
obtained from $G$ by adding all edges between non-adjacent vertices of
$X\cup Y$.  By the assumption, we have $\omega(G'_{h-1})=\omega(G)$.
By the induction hypothesis, $G'_{h-1}$ admits an $L$-coloring.  Then
this is an $L$-coloring of $G$.  Hence (\ref{eq:x-y-omega}) holds.

\medskip

Let $X=\{x_1, \ldots, x_{|X|}\}$ and $Y=\{y_1, \ldots, y_{|Y|}\}$.
Let $N_X=\{v\in V(G)\setminus (X\cup Y)\mid v$ has a neighbor in $X\}$
and $N_Y=\{v\in V(G)\setminus (X\cup Y)\mid v$ has a neighbor in
$Y\}$.  By the definition of a line-graph and of an augment, the set
$N_X$ is a clique and is complete to $X$; hence $|N_X|\le
\omega(G)-|X|$.  Likewise $N_Y$ is a clique and is complete to $Y$,
and $|N_Y|\le \omega(G)-|Y|$.  Let $\mu$ be the size of a maximum
matching in the bipartite graph $\overline{G}[X\cup Y]$.  By
K\H{o}nig's theorem we have $\mu+\omega(G)=|X|+|Y|$, so
$\mu=|X|+|Y|-4$.  Moreover, we may assume that the edges of
$\overline{G}[X\cup Y]$ form a matching of size $\mu$ (for otherwise
we can add some edges to $G$, in $X\cup Y$, which makes the coloring
problem only harder).

The graph $G_{h-1}\setminus\{x,y\}$ is elementary, and it has $h-1$
augments, so, by the induction hypothesis, it admits an $L$-coloring
$f$.  We will try to extend $f$ to $G$; if this fails, we will analyse
why and then show that we can find another $L$-coloring of
$G_{h-1}\setminus\{x,y\}$ that does extend to $G$.  Let $L'$ be the
list assignment defined on $X\cup Y$ as follows: for all $u\in X$, let
$L'(u)=L(u)\setminus f(N_X)$, and for all $v\in Y$, let
$L'(v)=L(v)\setminus f(N_Y)$.  Clearly, $f$ extends to an $L$-coloring
of $G$ if and only if $G[X\cup Y]$ admits an $L'$-coloring.  By
(\ref{eq:x-y-omega}) and up to symmetry, we may assume that either
$|Y|=4$ (and $|X|\le 4$) or $(|X|, |Y|)$ is equal to $(3,3)$ or
$(2,3)$.  We deal with each case separately.

\medskip

{\bf Case 1:} $|Y| = 4$ and $|X|\le 4$.  We have $|N_X|\le 4-|X|$ and
$|N_Y|=0$, so $|L'(u)|\ge |X|$ for all $u\in X$ and $|L'(v)|=4$ for
all $v\in Y$.  Since $\omega(G)=4$, there are $|X|$ non-edges between
$X$ and $Y$ that form a matching in $\overline{G}$.  By
Lemma~\ref{lemma:x-y-4}, $G[X\cup Y]$ admits an $L'$-coloring.

\medskip

{\bf Case 2:} $|X| = |Y| = 3$.  Here we have $\mu=2$, and we may
assume that the non-edges between $X$ and $Y$ are $x_2y_2$ and
$x_3y_3$.  We have $|N_X|\le 1$ and $|N_Y|\le 1$, so $|L'(u)|\ge 3$
for all $u\in X\cup Y$.  If $G[X\cup Y]$ is $L'$-colorable we are
done, so assume the contrary.  By Lemma~\ref{lemma:x-3-y-3}, there is
a clique $Q\subset X\cup Y$ such that $|L'(Q)|<|Q|$.  Thus $3\le
|L'(Q)|<|Q| \le 4$.  This implies that $|Q|=4$, and in particular $Q$
contains $x_1$ and $y_1$.  Moreover $|L'(Q)|=3$, so $L'(x_1)$ and
$L'(y_1)$ are equal and have size $3$, so $|N_X|= 1$ and $|N_Y|= 1$.
Let $N_X=\{u\}$ and $N_Y=\{v\}$.  Thus there are colors $a,b,c,d,d'$
such that $L(x_1) = \{a, b, c, d\}$, $L(y_1) = \{a, b, c, d'\}$,
$f(u)=d$ and $f(v)=d'$ (possibly $d=d'$).  In other words, $f$
satisfies the following ``bad'' property:
\begin{equation}\label{badf}
\longbox{Either $L(x_1)=L(y_1)$ and $f(u)=f(v)$, or $|L(x_1)\cap
L(y_1)|=3$ and $\{f(u)\}=L(x_1)\setminus L(y_1)$ and
$\{f(v)\}=L(y_1)\setminus L(x_1)$.}
\end{equation}

Let $G^*$ be the graph obtained from $G$ by removing all edges between
$X$ and $Y$ and adding two new vertices $u^*$ and $v^*$ with edges
$u^*v^*$, $u^*x_i$ ($i=1,2,3$) and $v^*y_i$ ($i=1,2,3$).  Let $H^*$ be
the graph obtained from $H$ by removing the vertex $q_{xy}$ and adding
three vertices $q_1,q_2,q_3$, with edges $q_1q_2$ and $q_2q_3$, plus
three parallel edges between $q_x$ and $q_1$ and three parallel edges
between $q_3$ and $q_y$.  So $H^*$ is bipartite, and it is easy to see
that $G^*$ is obtained from ${\cal L}(H^*)$ by augmenting $e_1,
\ldots, e_{h-1}$ as in $G$.  So $G^*$ is elementary.

We define a list assignment $L^*$ on $G^*$ as follows.  For all $v\in
V(G\setminus(X\cup Y))$, let $L^*(v)=L(v)$.  For all $v\in X\cup\{u^*,
v^*\}$ let $L^*(v)= \{a, b, c, d\}$, and for all $v\in Y$ let $L^*(v)=
\{a, b, c, d'\}$.  By the induction hypothesis on $h$, the graph $G^*$
admits an $L^*$-coloring $f^*$.  In particular $f^*$ is an
$L$-coloring of $G\setminus(X\cup Y)$.  We claim that if $d=d'$ then
$f^*(u)\neq f^*(v)$, and if $d\neq d'$ then either $f^*(u)\neq d$ or
$f^*(v)\neq d'$.  Indeed we have $f^*(X)=
\{a,b,c,d\}\setminus\{f^*(u)\}$ and $f^*(Y)=
\{a,b,c,d'\}\setminus\{f^*(v)\}$, so if the claim fails then
$f^*(X)=f^*(Y)$ and consequently $f^*(u^*)=f^*(v^*)$, a contradiction.
So the claim holds.  By the claim, we can use $f^*$ instead of $f$
above (as an $L$-coloring of $G\setminus (X\cup Y)$), because $f^*$
does not satisfy (\ref{badf}); so we can extend it to an $L$-coloring
of $G$.

\medskip

{\bf Case 3:} $|X| = 3$ and $|Y| = 2$.  Here we have $\mu=1$, and we
may assume that the only non-edge between $X$ and $Y$ is $x_3y_2$.  We
have $|N_X|\le 1$ and $|N_Y|\le 2$, so $|L'(u)|\ge 3$ for all $u\in X$
and $|L'(v)|\ge 2$ for all $v\in Y$.  If $G[X\cup Y]$ is
$L'$-colorable we are done, so assume the contrary.  By
Lemma~\ref{lemma:x-3-y-2}, there is a clique $Q\subset X\cup Y$ such
that $|L'(Q)|<|Q|$.  This inequality implies that $Q\not\subseteq Y$,
so $Q\cap X\neq\emptyset$.  Thus $3\le |L'(Q)|<|Q| \le 4$.  This
implies that $|Q|=4$, and in particular $Q$ contains $x_1$, $x_2$ and
$y_1$.  Moreover $|L'(Q)|=3$, so $L'(x_1)$ and $L'(x_2)$ are equal and
have size $3$, so $|N_X|= 1$, and $L'(y_1)$ has size at most $3$, so
$|N_Y|\ge 1$, and $L'(y_1) \subseteq L'(x_1)$.  Let $N_X=\{u\}$.  Thus
$L(x_1) = L(x_2)$, and $f$ satisfies the following ``bad'' property:
\begin{equation}\label{badf2}
\mbox{$f(u)\in L(x_1)$ and $L(y_1)\setminus f(N_Y) \subseteq
L(x_1)\setminus \{f(u)\}$.}
\end{equation}

Let $G^*=G\setminus \{x_3\}$.  Clearly $G^*$ is elementary.  Let $H^*$
be the graph obtained from $H$ by duplicating the edge $q_xq_{xy}$ (so
that there are two parallel edges between $q_x$ and $q_{xy}$) and
similarly duplicating $q_yq_{xy}$.  It is easy to see that $G^*$ is
obtained from ${\cal L}(H^*)$ by augmenting $e_1, \ldots, e_{h-1}$ as
in $G$.  We define a list assignment $L^*$ on $G^*$ as follows.  For
all $v\in V(G^*)\setminus \{y_2\}$, let $L^*(v)=L(v)$, and let
$L^*(y_2)= L(y_1)$.  By the induction hypothesis on $h$ the graph
$G^*$ admits an $L^*$-coloring $f^*$.  We claim that $f^*$ does not
satisfy the bad property (\ref{badf2}).  Indeed if it does, then
$f^*(u)\in L^*(x_1)$ and $L^*(y_1)\setminus f^*(N_Y) \subseteq
L^*(x_1)\setminus \{f^*(u)\}$.  Since $L^*(y_2)=L^*(y_1)$, we also
have $L^*(y_2)\setminus f^*(N_Y) \subseteq L^*(x_1)\setminus
\{f^*(u)\}$, and this means that the four vertices $x_1,x_2,y_1,y_2$
(which induce a clique) are colored by $f^*$ using colors from
$L^*(x_1)\setminus \{f^*(u)\}$, which has size $3$; but this is
impossible.  So the claim holds.  By the claim, we can use $f^*$
instead of $f$ above (as an $L$-coloring of $G\setminus (X\cup Y)$)
and we can extend it to an $L$-coloring of $G$.
This completes the proof of the theorem.
\end{proof}

\section{Claw-free perfect graphs}

Now we can prove Theorem~\ref{theorem:claw-free-perfect}, which we
restate here.
\begin{theorem}
Let $G$ be a claw-free perfect graph with $\omega(G) \leq 4$. Then
$ch(G) = \chi(G)$.
\end{theorem}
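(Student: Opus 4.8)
The plan is to use the Chvátal--Sbihi decomposition theorem (Theorem~\ref{thm:chvsbi}) together with the clique-cutset decomposition machinery described in the introduction, and to reduce the statement to the two already-established special cases: peculiar graphs with $\omega=4$ (Lemma~\ref{lem:colpec4}) and elementary graphs with $\omega\le 4$ (Theorem~\ref{thm-elementary-omega4}). Since $\chi(G)=\omega(G)$ for every perfect graph, it suffices to prove that every claw-free perfect graph $G$ with $\omega(G)\le 4$ is $\omega(G)$-choosable. I would first dispose of the trivial case $\omega(G)\le 3$, which is covered by \cite{Gravier2004211}, and assume $\omega(G)=4$. I would also note that it suffices to treat connected $G$, since choosability is determined componentwise.

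Next I would set up an induction on the number of vertices (equivalently, on the size of a clique-cutset decomposition tree). If $G$ has no clique cutset, then $G$ is an \emph{atom}, and by Theorem~\ref{thm:chvsbi} it is either peculiar or elementary; in the peculiar case $G$ is $4$-choosable by Lemma~\ref{lem:colpec4} (using that there is a unique peculiar graph with $\omega=4$), and in the elementary case $G$ is $\omega(G)$-choosable by Theorem~\ref{thm-elementary-omega4}. If $G$ does have a clique cutset, take an \emph{extremal} minimal clique cutset $C$ (which exists by the remark in the introduction), so that some component $H$ of $G\setminus C$ gives an atom $G_1 = G[V(H)\cup C]$; let $G_2 = G[V(G)\setminus V(H)]$ be the "rest." Both $G_1$ and $G_2$ are claw-free perfect with clique number at most $4$, and $G_2$ has strictly fewer vertices, so the induction hypothesis applies to $G_2$, while $G_1$ is handled by the atom case above. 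The gluing step is standard: given a list assignment $L$ with $|L(v)|=4$ for all $v$, first $L$-color $G_2$ by induction; this colors $C$ with some set of colors; then recolor — or rather, extend — by restricting the lists on $V(G_1)\setminus C$ and observing that since $C$ is a clique of size at most $4=\omega(G_1)$ already properly colored, one can permute colors within the precolored clique $C$ if necessary and extend into $G_1$.

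The one genuinely delicate point is this extension across a clique cutset: a priori, $C$ has already received a fixed coloring from $G_2$, and one must show $G_1$ can be $L$-colored agreeing with it on $C$. The clean way is to prove a small lemma: if $C$ is a clique in a claw-free perfect atom $G_1$ with $\omega(G_1)\le 4$, and $L$ is any assignment with $|L(v)|=4$, then $G_1$ has an $L$-coloring with \emph{any} prescribed proper coloring of $C$; since $|C|\le 4$ and the vertices of $C$ may be permuted, and since $G_1$ itself is $4$-choosable (as peculiar or elementary), it is enough to note that the colors used on $C$ by \emph{some} $L$-coloring of $G_1$ can be matched, via a permutation of the $4$ colors, to the colors forced by $G_2$ — here one uses that $\omega(G_1)=\omega(G)$ in the worst case so $C$ lies in a $4$-clique and Hall's theorem / a direct swap argument lets the colorings agree. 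Alternatively, and perhaps more robustly, one threads the induction the other way: build the $L$-coloring atom-by-atom along the decomposition tree starting from a leaf, at each stage extending a partial $L$-coloring over one more atom glued along a clique; the only obstruction at each step is recoloring a clique of size $\le 4$ inside a $4$-choosable atom, which always succeeds because an atom that is elementary or peculiar is in fact "$C$-list-colorable" for every clique $C$ (a statement that should be extracted, if not already implicit, from the proofs of Lemma~\ref{lem:colpec4} and Theorem~\ref{thm-elementary-omega4}, both of which actually produce colorings compatible with arbitrary precolorings of small cliques). I expect the write-up to be short: the real content is all in the lemmas already proved, and the main theorem is essentially an assembly argument, with the clique-cutset gluing being the only step requiring a careful (but routine) argument.
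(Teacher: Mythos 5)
Your overall architecture (induction on $|V(G)|$, Chv\'atal--Sbihi, extremal clique cutset, reduce to the peculiar and elementary lemmas) matches the paper's, but the step you yourself flag as ``the one genuinely delicate point'' --- gluing across the clique cutset $C$ --- is a genuine gap, and it is where essentially all of the paper's work in this theorem lies. Your proposed fix, namely that an atom $G_1$ admits an $L$-coloring realizing \emph{any} prescribed proper coloring of a clique $C$, and that one can ``permute the $4$ colors'' to match the coloring forced by the other side, does not work in the list setting: there is no global palette of four colors to permute, since each vertex has its own list, and a permutation matching the colors on $C$ need not map $L(v)$ into $L(v)$ for vertices $v$ deeper in the atom. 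Nor is the extension property ``implicit'' in Lemma~\ref{lem:colpec4} or Theorem~\ref{thm-elementary-omega4}: neither produces colorings compatible with arbitrary precolorings of a clique, and the paper never proves such a statement.

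What the paper actually does is subtler: it colors $G[C\cup A_1]$ (the non-atom side) by induction, then \emph{discards} the colors on $C$ (and sometimes on a few vertices of $A_1$ adjacent to $C$) and recolors $C\cup A_2$ from scratch, constrained only by the colors that $f$ placed on $N(C)\cap A_1$. Making this work requires (i) structural claims about how $C$ sits inside the elementary atom $G_2$ (that $C=C_u$ for a vertex $u$ of the underlying bipartite graph, and that $C$ together with its $A_1$-neighbors is a clique), (ii) in the non-cobipartite case, an application of Galvin's theorem with the artificial lists $L^*(d_i)=\{c_1,\ldots,c_i\}$ to force the new coloring to agree with $f$ on those $A_1$-neighbors, and (iii) in the cobipartite case, the entire battery of ad hoc lemmas of Section~3 (Lemmas~\ref{lem:c2}, \ref{lem:c3b}, \ref{lem:c4}, \ref{lem:c3}) applied to $C\cup A_2$ with lists reduced by the colors on $N(C)\cap A_1$, after a careful case analysis on $|C|$, $|W|$ and the neighborhood structure. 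None of this is ``routine assembly,'' and your proposal as written would stall exactly at the point where you appeal to the unproved precoloring-extension property of atoms. (A second, smaller omission: when $G$ has a clique cutset you must also rule out peculiar atoms appearing in the decomposition; the paper does this via Lemma~\ref{lem:peculiar}, using that a connected claw-free $C_5$-free graph containing a peculiar subgraph is itself peculiar.)
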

\begin{proof}
We may assume that $G$ is connected. Let $L$ be a list assignment on
$G$ such that $|L(v)|\ge 4$ for all $v\in V(G)$. Let us prove that
$G$ is $L$-colorable by induction on the number of vertices of $G$.
If $G$ is peculiar, then by Lemma~\ref{lem:colpec4} we know that the
theorem holds. So assume that $G$ is not peculiar. By
Theorem~\ref{thm:chvsbi} and Lemma~\ref{lem:peculiar}, we know that
$G$ can be decomposed by clique cutsets into elementary graphs. We
may assume that:
\begin{equation}\label{nosimp}
\mbox{$G$ has no simplicial vertex.}
\end{equation}
Suppose that $x$ is a simplicial vertex in $G$. By the induction
hypothesis, $G\setminus \{x\}$ admits an $L$-coloring $f$. Since $x$
is
simplicial, it has at most three neighbors. So $f$ can be extended to
$x$ by choosing in $L(x)$ a color not assigned by $f$ to its
neighbors. Thus (\ref{nosimp}) holds.

\medskip

By the discussion after the definition of a clique cutset (Section~1),
$G$ admits an extremal cutset $C$, i.e., a minimal clique cutset such
that for some component $A$ of $G\setminus C$ the induced subgraph
$G[A\cup C]$ is an atom (i.e., has no clique cutset).  Since $C$ is
minimal, every vertex $x$ of $C$ has a neighbor in every component of
$G\setminus C$ (for otherwise $C\setminus \{x\}$ would be a clique
cutset), and it follows that $G\setminus C$ has only two components
$A_1, A_2$ (for otherwise $x$ would be the center of a claw).  For
$i=1,2$ let $G_i=G[C\cup A_i]$.  Hence we may assume that $G_2$ is
elementary.

By the induction hypothesis, the graph $G[C\cup A_1]$ is
$4$-choosable, so it admits an $L$-coloring $f$.  We will show that we
can extend this coloring to $G$.

By Theorem~\ref{thm:mafree}, $G_2$ is obtained by augmenting the
line-graph ${\cal L}(H)$ of a bipartite graph $H$.  For each augment
$(X,Y)$ of $G_2$, select a pair of adjacent vertices such that one is
in $X$ and the other is in $Y$.  Also select all vertices of $G_2$
that are not in any augment.  It is easy to see that ${\cal L}(H)$ is
isomorphic to the subgraph of $G_2$ induced by the selected vertices.
Without loss it will be convenient to view ${\cal L}(H)$ as equal to
that induced subgraph.  We claim that:
\begin{equation}\label{cxy}
\longbox{If there is an augment $(X,Y)$ in $G_2$ such that both $C\cap
X$ and $C\cap Y$ are non-empty, then $V(G_2)=X\cup Y$.}
\end{equation}
Suppose on the contrary, under the hypothesis of (\ref{cxy}), that
$V(G_2)\neq X\cup Y$.  Let $Z=V(G_2)\setminus(X\cup Y)$.  Let
$Z_X=\{z\in Z \mid z$ has a neighbor in $X\}$ and $Z_Y=\{z\in Z \mid
z$ has a neighbor in $Y\}$.  By the definition of an augment, $Z_X$ is
complete to $X$ and anticomplete to $Y$, and $Z_Y$ is complete to $Y$
and anticomplete to $X$, and $Z_X\cap Z_Y=\emptyset$.  Since $G_2$ is
connected, we may assume up to symmetry that $Z_X\neq\emptyset$.  Pick
any $z\in Z_X$.  Since $G_2$ is an atom, $X$ is not a cutset of $G_2$
(separating $z$ from $Y$), so $Z_Y\neq\emptyset$, which restores the
symmetry between $X$ and $Y$.  Since $C$ is a clique and has a vertex
in $Y$, $C$ contains no vertex from $Z_X$; similarly, $C$ contains no
vertex from $Z_Y$; hence $C\subset X\cup Y$.  Pick any $x\in C\cap X$.
Since $C$ is a minimal cutset, $x$ has a neighor $a_1$ in $A_1$.  Then
$a_1$ must be adjacent to every neighbor $y$ of $x$ in $Y$, for
otherwise $\{x,a_1,z,y\}$ induces a claw; and it follows that $y\in
C$.  We can repeat this argument for every vertex in $C$; by the last
item in Theorem~\ref{thm:mafree} it follows that every vertex in
$X\cup Y$ is adjacent to $a_1$ and, consequently, is in $C$.  But this
is a contradiction because $C$ is a clique and $X\cup Y$ is not a
clique.  Thus (\ref{cxy}) holds.

\medskip

Now we distinguish two cases.

(I) First suppose that $G_2$ is not a cobipartite graph.

For every edge $uv$ in the bipartite multigraph $H$, let $C_{uv}$ be
the subset of $V(G_2)$ defined as follows. If $v$ has degree~$2$ in
$H$, say $N_H(v)=\{u,u'\}$, and $\{vu,vu'\}$ is a flat edge in ${\cal
L}(H)$
on which an augment $(X,X')$ of $G_2$ is based (where $X$ corresponds
to $vu$ and $X'$ corresponds to $vu'$), then let $C_{uv}=X$. If $uv$
is not such an edge, then let $C_{uv}$ be the set of parallel edges in
$H$ whose ends are $u$ and $v$. Now for every vertex $u$ in $H$, let
$C_u=\bigcup_{uv\in E(H)}C_{uv}$. Note that $C_u$ is a clique in
$G_2$. We claim that:
\begin{equation}\label{ccu}
\mbox{There is a vertex $u$ in $H$ such that $C=C_u$.}
\end{equation}
For every augment $(X,Y)$ in $G_2$ we have $V(G_2)\neq X\cup Y$,
because $G_2$ is not cobipartite, and so, by (\ref{cxy}), either
$C\cap X$ or $C\cap Y$ is empty.  It follows that there is a vertex
$u$ in $H$ such that $C\subseteq C_u$.  Suppose that $C\neq C_u$.
Then we can pick vertices $x\in C$ and $x'\in C_u\setminus C$ such
that $H$ has vertices $v,v'$ with $x\in C_{uv}$ and $x'\in C_{uv'}$.
Since $C$ is a minimal cutset, $x$ has a neighbor $a_1$ in $A_1$.
Since $G_2$ is an atom, the set $C_u\setminus C_{uv}$ is not a cutset,
so $x$ has a neighbor $z$ in $V(G_2)\setminus C_u$.  Then
$\{x,a_1,x',z\}$ induces a claw, a contradiction.  So $C= C_u$ and
(\ref{ccu}) holds.

\medskip

By (\ref{ccu}), let $u$ be a vertex in $H$ such that $C=C_u$.  Let
$D=\{d\in A_1\mid d$ has a neighbor in $C\}$.  We claim that:
\begin{equation}\label{dc}
\mbox{$D\cup C$ is a clique.}
\end{equation}
Pick any $d$ in $D$.  First suppose that $d$ is not complete to $C$.
Then we can find vertices $x\in C\cap N(d)$ and $x'\in C\setminus
N(d)$ such that $H$ has vertices $v,v'$ with $x\in C_{uv}$ and $x'\in
C_{uv'}$.  Since $G_2$ is an atom, the set $C_u\setminus C_{uv}$ is
not a cutset, so $x$ has a neighbor $z$ in $V(G_2)\setminus C_u$.
Then $\{x,d,x',z\}$ induces a claw, a contradiction.  It follows that
$D$ is complete to $C$.  Now suppose that $D$ contains non-adjacent
vertices $d,d'$.  Pick any $x\in C$.  Then $x$ has a neighbor $z$ in
$V(G_2)\setminus C_u$.  Then $\{x,d,d',z\}$ induces a claw, a
contradiction.  So $D$ is a clique.  Thus (\ref{dc}) holds.

\begin{equation}\label{dca2}
\mbox{$G[D\cup C\cup A_2]$ is an elementary graph.}
\end{equation}
Let $H^*$ be the bipartite graph obtained from $H$ by adding $|D|$
vertices of degree~$1$ adjacent to vertex $u$. Then it is easy to see
(by (\ref{ccu}) and (\ref{dc})) that $G[D\cup C\cup A_2]$ can be
obtained from ${\cal L}(H^*)$ by augmenting the same flat edges as
for $G_2$
and with the same augments. Thus (\ref{dca2}) holds.

\medskip

Let $D=\{d_1, \ldots, d_p\}$. (Actually we have $|C|\ge 2$ by
(\ref{ccu}) and consequently $|D|\le 2$ by (\ref{dc}), but we will not
use this fact.) Recall that $f$ is an $L$-coloring of $G_1$; so for
$i=1,\ldots,p$ let $c_i=f(d_i)$.

The maximum degree in $H^*$ is $\Delta(H^*)=\omega({\cal L}(H^*))\le
\omega(G_2)\le \omega(G)\le 4$.  So we can color the edges of $H^*$
with $4$ colors in such a way that vertices $d_1, \ldots, d_p$ receive
colors $c_1, \ldots, c_p$ respectively.  Let $L^*$ be a list
assignment on ${\cal L}(H^*)$ defined as follows.  If $v\in V({\cal
L}(H))$, let $L^*(v)=L(v)$.  For $i=1,\ldots,p$, let $L^*(d_i)=\{c_1,
\ldots, c_i\}$.  By Theorem~\ref{thm:galvin}, ${\cal L}(H^*)$ admits
an $L^*$-coloring $f^*$.  Now we can use the same technique as in the
proof of Theorem~\ref{thm-elementary-omega4} to extend $f^*$ to an
$L$-coloring of $G_2$.  Moreover, we have $f^*(d_1)=c_1$ and
consequently $f^*(d_i)=c_i=f(d_i)$ for all $i=1,\ldots, p$.  Let $f'$
be defined as follows.  For all $v\in V(G_1)\setminus C$, let
$f'(v)=f(v)$, and for all $v\in V(G_2)$, let $f'(v)=f^*(v)$.  Then
$f'$ is an $L$-coloring of $G$.  This completes the proof in case (I).

\bigskip

(II) We may now assume that $G_2$ is a cobipartite graph.  Let $W$ be
the set of vertices of $A_1$ that have a neighbor in $C$.  For all
$x\in C$, let $N_1(x)=N(x)\cap A_1$, $N_2(x)=N(x)\cap A_2$ and
$M_2(x)=A_2\setminus N(x)$.  We observe that:
\begin{equation}\label{n12m2}
\longbox{$N_1(x)$ and $N_2(x)$ are non-empty cliques, and $M_2(x)$ is
a clique.}
\end{equation}
We know that $N_1(x)$ and $N_2(x)$ are non-empty because $C$ is a
minimal cutset.  For $i=1,2$ pick any $n_i\in N_i(x)$; then $N_i(x)$
is a clique, for otherwise $x$ is the center of a claw with $n_{3-i}$
and two non-adjacent vertices from $N_i(x)$.  Also $M_2(x)$ is a
clique, for otherwise $G_2$ contains a stable set of size~$3$.  Thus
(\ref{n12m2}) holds.

\medskip

Suppose that $|C|=1$.  Let $C=\{x\}$.  Then $M_2(x)$ is empty, for
otherwise $N_2(x)$ is a clique cutset in $G_2$ (separating $x$ from
$M_2(x)$).  So $G_2$ is a clique.  Then every vertex in $A_2$ is
simplicial, a contradiction to (\ref{nosimp}).  So $|C|\ge 2$.

\medskip

Suppose that two vertices $x$ and $y$ of $C$ have inclusionwise
incomparable neighborhoods in $A_1$.  So there is a vertex $a$ in
$A_1$ adjacent to $x$ and not to $y$, and there is a vertex $b$ in
$A_1$ adjacent to $y$ and not to $x$.  If a vertex $u$ in $A_2$ is
adjacent to $x$, then it is adjacent to $y$, for otherwise
$\{x,a,y,u\}$ induces a claw, and vice-versa.  So $N_2(x)=N_2(y)$, and
$|N_2(x)|\le 2$ (because $N_2(x)\cup\{x,y\}$ is a clique), and
$M_2(x)=M_2(y)$.  Suppose that $M_2(x)\neq\emptyset$.  Let $C'=\{u\in
C\setminus\{x,y\}\mid u$ is complete to $N_2(x)\}$.  Since $C'\cup
N_2(x)$ is a clique, it cannot be a cutset of $G_2$, so some vertex
$z$ in $C\setminus(C'\cup\{x,y\})$ has a neighbor $v$ in $M_2(x)$.
Since $z\notin C'$, $z$ has a non-neighbor $u$ in $N_2(x)$.  Then $za$
is an edge, for otherwise $\{x,a,z,u\}$ induces a claw.  But then
$\{z,a,y,v\}$ induces a claw, a contradiction.  So $M_2(x)=\emptyset$.
Thus $A_2=N_2(x)=N_2(y)$.  If the vertices in $A_2$ have pairwise
comparable neighborhoods in $C$, then it follows easily that the
vertex in $A_2$ with the smallest degree is simplicial in $G$, a
contradiction to (\ref{nosimp}).  So there are two vertices $u,v$ in
$A_2$ and two vertices $z,t$ in $C$ such that $tu,zv$ are edges and
$tv,zu$ are not edges.  Clearly $z,t\notin\{x,y\}$, so $|C|=4$.  Then
$za$ is an edge, for otherwise $\{x,a,z,u\}$ induces a claw; and
similarly, $zb,ta,tb$ are edges.  Then $ab$ is an edge, for otherwise
$\{z,a,b,v\}$ induces a claw.  Recall that since $G$ is perfect and
claw-free, the neighborhood of every vertex can be partitioned into
two cliques, and consequently (since $\omega(G)\le 4$) every vertex
has degree at most~$6$.  Hence $N(x)=\{y,z,t,a,u,v\}$ (because we
already know that $x$ is adjacent to these six vertices), and
similarly $N(y)=\{x,z,t,b,u,v\}$, $N(z)=\{x,y,t,a,b,v\}$, and
$N(t)=\{x,y,z,a,b,u\}$.  It follows that $A_2=\{u,v\}$ and
$W=\{a,b\}$.  Here we view $f$ as an $L$-coloring of $G_1\setminus
(C\cup\{a,b\})$ rather than of $G_1$, and we try to extend it to
$\{a,b\}\cup C\cup A_2$.  Let $S=\{s\in
V(G_1)\setminus(C\cup\{a,b\})\mid s$ has a neighbor in $\{a,b\}\}$.
If a vertex $s\in S$ is adjacent to $a$ and not to $b$, then
$\{a,s,b,x\}$ induces a claw, a contradiction.  By symmetry this
implies that $S$ is complete to $\{a,b\}$.  Then $S$ is a clique, for
otherwise $\{a,s,s',x\}$ induces a claw from some non-adjacent
$s,s'\in S$.  So $S\cup\{a,b\}$ is a clique, and so $|S|\le 2$.  We
remove the colors of $f(S)$ from the lists of $a$ and $b$.  By
Lemma~\ref{lem:c4} we can color the vertices of $W \cup C\cup \{u,v\}$
with colors from the lists thus reduced.  So $G$ is $L$-colorable.

\medskip

Therefore we may assume that any two vertices of $C$ have
inclusionwise comparable neighborhoods in $A_1$.  This implies that
some vertex $a_1$ in $A_1$ is complete to $C$, and that some vertex
$x$ in $C$ is complete to $W$.  Since $\{a_1\}\cup C$ is a clique, we
have $|C|\le 3$.  We have $W=N_1(x)$ and, by (\ref{n12m2}), $W$ is a
clique, so $|W|\le 3$.  Here we view $f$ as an $L$-coloring of
$G_1\setminus C$ rather than of $G_1$, and we try to extend it to
$C\cup A_2$.  If $|W|=1$ (i.e., $W=\{a_1\}$), we remove the color
$f(a_1)$ from the list of the vertices in $C$.  Then $G_2$ is a
cobipartite graph which, with the reduced lists, satisfies the
hypothesis of Lemma~\ref{lem:c2} or~\ref{lem:c3}, so $f$ can be
extended to $G_2$.  Hence assume that $|W|\ge 2$.

\medskip

Suppose that $W$ is complete to $C$. Then $W\cup C$ is a clique, so
$|W|= 2$ and $|C|=2$. Let $C=\{x,y\}$. Let $X=N_2(x)$, $Y=N_2(y)$,
and $Z= A_2\setminus (X\cup Y)$. Suppose that $Z\neq\emptyset$. By
(\ref{n12m2}) $Z\cup (X\setminus Y)$ is a clique, since it is a subset
of $M_2(y)$. Likewise, $Z\cup (Y\setminus X)$ is a clique. Moreover
$X\setminus Y$ is complete to $Y\setminus X$, for otherwise
$\{x,y,v,z,u\}$ induces a $C_5$ for some non-adjacent $u\in X\setminus
Y$ and $v\in Y\setminus X$ and for any $z\in Z$. It follows that
$X\cup Y$ is a clique cutset in $G_2$ (separating $\{x,y\}$ from $Z$),
a contradiction. So $Z=\emptyset$, and $A_2=X\cup Y$. Here we view
$f$ as an $L$-coloring of $G_1\setminus C$ rather than of $G_1$, and
we try to extend it to $C\cup A_2$. We remove the colors of $f(W)$
from the list of $x$ and $y$. Since $|W|= 2$, each of these lists
loses at most two colors. By Lemma~\ref{lem:c2} we can color the
vertices of $C\cup A_2$ with colors from the lists thus reduced. So
$G$ is $L$-colorable.

\medskip

Now assume that $W$ is not complete to $C$.  So some vertex $a_2$ in
$W$ has a non-neighbor $y$ in $C$.  Then $N_2(x)\cup\{y\}$ is a
clique, for otherwise $\{x,a_2,u,v\}$ induces a clique for any two
non-adjacent vertices $u,v\in X\cup\{y\}$.  Suppose that $M_2(x)$ is
empty.  So $A_2=N_2(x)$.  Then the vertices in $A_2$ have comparable
neighborhoods in $C$ (because they are complete to $\{x,y\}$ and
$|C|\le 3$), so the vertex in $A_2$ with the smallest degree is
simplicial, a contradiction to (\ref{nosimp}).  Therefore $M_2(x)$ is
not empty.  Since the clique $\{y\}\cup N_2(x)$ is not a cutset in
$G_2$, some vertex $z$ in $C\setminus\{x,y\}$ has a neighbor $v$ in
$M_2(x)$.  Hence $|C|=3$.  Then $z$ has a non-neighbor $u$ in
$N_2(x)$, for otherwise $\{y,z\}\cup N_2(x)$ is a clique cutset in
$G_2$ (separating $x$ from $v$).  Then $za_2$ is an edge, for
otherwise $\{x,a_2,z,u\}$ induces a claw; and $yv$ is an edge, for
otherwise $\{z,a_2,y,v\}$ induces a claw; and $uv$ is an edge since
$N_2(y)$ is a clique.  Moreover, if $N_2(x)$ contains a vertex $u'$
adjacent to $z$, then $vu'$ is an edge since $N_2(z)$ is a clique.
Since this holds for every vertex in $M_2(x)\cap N(z)$, we deduce that
$(M_2(x)\cap N(z))\cup\{y\}\cup N_2(x)$ is a clique $Q$.  If $v'$ is
any non-neighbor of $z$ in $M_2(x)$, then $Q$ is a clique cutset in
$G_2$ (separating $\{x,z\}$ from $v'$), a contradiction.  So
$M_2(x)\subset N(z)$.  Suppose that $|W|=3$.  Pick $a_3\in
W\setminus\{a_1,a_2\}$.  Then $a_3z$ is not an edge, for otherwise
$W\cup\{x,z\}$ is a clique of size~$5$.  So, by the same argument as
for $a_2$, we deduce that $a_3y$ is an edge.  But this means that $y$
and $z$ have inclusionwise incomparable neighborhoods in $A_1$
(because of $a_2,a_3$), a contradiction.  So $|W|=2$.  We remove the
color $f(a_1)$ from the lists of $x,y,z$ and remove the color $f(a_2)$
from the list of $x$ and $z$.  By Lemma~\ref{lem:c3b} we can color the
vertices of $C\cup A_2$ with colors from the lists thus reduced.  So
$G$ is $L$-colorable.  This completes the proof of the theorem.
\end{proof}

\clearpage
\bibliographystyle{plain}

\end{document}